\def\SetFancyGraph {
	\SetVertexMath
	\GraphInit[vstyle=Art]
	\SetUpVertex[MinSize=2pt]
	\SetVertexLabel
	\tikzset{VertexStyle/.style = {shape = circle,shading = ball,ball color = black,inner sep = 1.5pt}}
	\SetUpEdge[color=black]
	\tikzset{>=varstealth}
	\tikzset{->-/.style={decoration={ markings, mark=at position 0.8 with {\arrow{>}}},postaction={decorate}}}
	\tikzset{->--/.style={decoration={ markings, mark=at position 0.55 with {\arrow{>}}},postaction={decorate}}}
}
\newtheorem{thm}{Theorem}
\newtheorem{lemma}[thm]{Lemma}
\newtheorem{cor}[thm]{Corollary}
\newtheorem{prop}[thm]{Proposition}
\newtheorem{conj}[thm]{Conjecture}
\newtheorem{Definition}[thm]{Definition}
\newtheorem{Example}[thm]{Example}
\newtheorem{Remark}[thm]{Remark}
\newenvironment{remark}
  {\begin{Remark}\rm}{\end{Remark}}
\numberwithin{equation}{section}
\apptocmd{\sloppy}{\hbadness 10000\relax}{}{}
\title{Sorting via chip-firing}
\author{Sam Hopkins}
\author{Thomas McConville}
\author{James Propp}
\begin{document}

\begin{abstract}
We investigate a variant of the chip-firing process on the infinite path graph~$\mathbb{Z}$: rather than treating the chips as indistinguishable, we label them with positive integers.  To fire an unstable vertex, i.e.\ a vertex with more than one chip, we choose any two chips at that vertex and move the lesser-labeled chip to the left and the greater-labeled chip to the right.  This labeled version of the chip-firing process exhibits a remarkable confluence property, similar to but subtler than the confluence that prevails for unlabeled chip-firing: when all chips start at the origin and the number of chips is even, the chips always end up in sorted order.  Our proof of sorting relies upon an independently interesting lemma concerning unlabeled chip-firing which says that stabilization preserves a natural partial order on configurations. We also discuss some extensions of this sorting phenomenon to other graphs (variants of the infinite path), to other initial configurations, and to other Cartan-Killing types.
\end{abstract}

\maketitle

\section{Introduction} \label{sec:intro}

We introduce a labeled version of the chip-firing process. The chip-firing process is a discrete dynamical system that takes place on a graph. The name ``chip-firing'' was coined by Bj\"{o}rner, Lovasz, and Shor~\cite{bjorner1991chip}, but in fact this process is essentially the same as the abelian sandpile model introduced by Bak, Tang, and Wiesenfeld~\cite{bak1987self} and further developed by Dhar~\cite{dhar1990self}~\cite{dhar1999abelian}. (See also the work of Engel~\cite{engel1976why}.) Here ``abelian'' means that the order in which certain local moves are carried out has no effect on the final state. This property is also often called ``confluence'' in the context of abstract rewriting systems~\cite{huet1980confluent}.\footnote{We will often use ``confluent'' in a slightly non-standard sense, where it might be more correct to say ``confluent and terminating.'' See the proof of Lemma~\ref{lem:stab} for technical definitions of these properties.} Dhar views the abelian sandpile model as a prototype for networks of communicating processors that achieve predictable results despite a lack of global synchronization; this point of view has been developed by Levine and his coauthors~\cite{bond2016abelian1}~\cite{bond2016abelian2}~\cite{bond2016abelian3}~\cite{holroyd2015abelian}, who introduced a broad family of such networks, called ``abelian networks,'' to which the abelian sandpile model belongs. For more background on sandpiles, consult the short survey article~\cite{levine2010sandpile} or the upcoming book~\cite{corry2017divisors}.

Bj\"{o}rner, Lovasz, and Shor were directly inspired by papers of Spencer~\cite{spencer1986balancing} and Anderson et al.~\cite{anderson1989disks} that investigated chip-firing in the special case of the infinite path graph~$\mathbb{Z}$ (which has vertex set $\mathbb{Z}$ with $i$ and $j$ joined by an edge whenever $|i-j| = 1$). In fact, it is in exactly this special case of the infinite path graph~$\mathbb{Z}$ that we introduce labeled chip-firing. Here is how the labeled chip-firing process on $\mathbb{Z}$ works: we start with~$n$ labeled chips~$(1),(2),\ldots,(n)$ at the origin; at each step we choose any two chips~$(a)$ and~$(b)$ with $a < b$ that occupy the same vertex~$i$ and \emph{fire} these chips together, moving~$(a)$ to vertex~$i-1$ and $(b)$ to vertex~$i+1$; we keep carrying out firings until no chips can fire. For example, suppose $n=4$, so that we start from the following configuration (where we draw~$\mathbb{Z}$ in the plane as a number line in the usual way, with~$i-1$ to the left of $i$):
\begin{center}
\begin{tikzpicture}[scale=1]
	\SetFancyGraph
	\Vertex[NoLabel,x=-4,y=0,empty]{-4}
	\Vertex[NoLabel,x=-3,y=0]{-3}
	\Vertex[NoLabel,x=-2,y=0]{-2}
	\Vertex[NoLabel,x=-1,y=0]{-1}
	\Vertex[NoLabel,x=0,y=0]{0}
	\Vertex[NoLabel,x=1,y=0]{1}
	\Vertex[NoLabel,x=2,y=0]{2}
	\Vertex[NoLabel,x=3,y=0]{3}
	\Vertex[NoLabel,x=4,y=0,empty]{4}
	\Edges[style={thick}](-3,-2,-1,0,1,2,3)
	\Edges[style={thick,dashed}](-4,-3)
	\Edges[style={thick,dashed}](3,4)
	\node [circle,draw,thick,inner sep=0,minimum size=12pt] at (0,0.4) {1};
	\node [circle,draw,thick,inner sep=0,minimum size=12pt] at (0,0.9) {2};
	\node [circle,draw,thick,inner sep=0,minimum size=12pt] at (0,1.4) {3};
	\node [circle,draw,thick,inner sep=0,minimum size=12pt] at (0,1.9) {4};
\end{tikzpicture}
\end{center}
By firing chips~$(1)$ and~$(2)$ we reach the following configuration:
\begin{center}
\begin{tikzpicture}[scale=1]
	\SetFancyGraph
	\Vertex[NoLabel,x=-4,y=0,empty]{-4}
	\Vertex[NoLabel,x=-3,y=0]{-3}
	\Vertex[NoLabel,x=-2,y=0]{-2}
	\Vertex[NoLabel,x=-1,y=0]{-1}
	\Vertex[NoLabel,x=0,y=0]{0}
	\Vertex[NoLabel,x=1,y=0]{1}
	\Vertex[NoLabel,x=2,y=0]{2}
	\Vertex[NoLabel,x=3,y=0]{3}
	\Vertex[NoLabel,x=4,y=0,empty]{4}
	\Edges[style={thick}](-3,-2,-1,0,1,2,3)
	\Edges[style={thick,dashed}](-4,-3)
	\Edges[style={thick,dashed}](3,4)
	\node [circle,draw,thick,inner sep=0,minimum size=12pt] at (-1,0.4) {1};
	\node [circle,draw,thick,inner sep=0,minimum size=12pt] at (1,0.4) {2};
	\node [circle,draw,thick,inner sep=0,minimum size=12pt] at (0,0.4) {3};
	\node [circle,draw,thick,inner sep=0,minimum size=12pt] at (0,0.9) {4};
\end{tikzpicture}
\end{center}
Then by firing~$(3)$ and~$(4)$ together we reach this configuration:
\begin{center}
\begin{tikzpicture}[scale=1]
	\SetFancyGraph
	\Vertex[NoLabel,x=-4,y=0,empty]{-4}
	\Vertex[NoLabel,x=-3,y=0]{-3}
	\Vertex[NoLabel,x=-2,y=0]{-2}
	\Vertex[NoLabel,x=-1,y=0]{-1}
	\Vertex[NoLabel,x=0,y=0]{0}
	\Vertex[NoLabel,x=1,y=0]{1}
	\Vertex[NoLabel,x=2,y=0]{2}
	\Vertex[NoLabel,x=3,y=0]{3}
	\Vertex[NoLabel,x=4,y=0,empty]{4}
	\Edges[style={thick}](-3,-2,-1,0,1,2,3)
	\Edges[style={thick,dashed}](-4,-3)
	\Edges[style={thick,dashed}](3,4)
	\node [circle,draw,thick,inner sep=0,minimum size=12pt] at (-1,0.4) {1};
	\node [circle,draw,thick,inner sep=0,minimum size=12pt] at (1,0.4) {2};
	\node [circle,draw,thick,inner sep=0,minimum size=12pt] at (-1,0.9) {3};
	\node [circle,draw,thick,inner sep=0,minimum size=12pt] at (1,0.9) {4};
\end{tikzpicture}
\end{center}
After firing~$(1)$ and~$(3)$, and then firing~$(2)$ and~$(4)$, in two more steps we reach the following configuration:
\begin{center}
\begin{tikzpicture}[scale=1]
	\SetFancyGraph
	\Vertex[NoLabel,x=-4,y=0,empty]{-4}
	\Vertex[NoLabel,x=-3,y=0]{-3}
	\Vertex[NoLabel,x=-2,y=0]{-2}
	\Vertex[NoLabel,x=-1,y=0]{-1}
	\Vertex[NoLabel,x=0,y=0]{0}
	\Vertex[NoLabel,x=1,y=0]{1}
	\Vertex[NoLabel,x=2,y=0]{2}
	\Vertex[NoLabel,x=3,y=0]{3}
	\Vertex[NoLabel,x=4,y=0,empty]{4}
	\Edges[style={thick}](-3,-2,-1,0,1,2,3)
	\Edges[style={thick,dashed}](-4,-3)
	\Edges[style={thick,dashed}](3,4)
	\node [circle,draw,thick,inner sep=0,minimum size=12pt] at (-2,0.4) {1};
	\node [circle,draw,thick,inner sep=0,minimum size=12pt] at (0,0.4) {2};
	\node [circle,draw,thick,inner sep=0,minimum size=12pt] at (0,0.9) {3};
	\node [circle,draw,thick,inner sep=0,minimum size=12pt] at (2,0.4) {4};
\end{tikzpicture}
\end{center}
Finally, by firing~$(2)$ and~$(3)$ we reach the following \emph{stable} configuration, which has no more possible firings:
\begin{center}
\begin{tikzpicture}[scale=1]
	\SetFancyGraph
	\Vertex[NoLabel,x=-4,y=0,empty]{-4}
	\Vertex[NoLabel,x=-3,y=0]{-3}
	\Vertex[NoLabel,x=-2,y=0]{-2}
	\Vertex[NoLabel,x=-1,y=0]{-1}
	\Vertex[NoLabel,x=0,y=0]{0}
	\Vertex[NoLabel,x=1,y=0]{1}
	\Vertex[NoLabel,x=2,y=0]{2}
	\Vertex[NoLabel,x=3,y=0]{3}
	\Vertex[NoLabel,x=4,y=0,empty]{4}
	\Edges[style={thick}](-3,-2,-1,0,1,2,3)
	\Edges[style={thick,dashed}](-4,-3)
	\Edges[style={thick,dashed}](3,4)
	\node [circle,draw,thick,inner sep=0,minimum size=12pt] at (-2,0.4) {1};
	\node [circle,draw,thick,inner sep=0,minimum size=12pt] at (-1,0.4) {2};
	\node [circle,draw,thick,inner sep=0,minimum size=12pt] at (1,0.4) {3};
	\node [circle,draw,thick,inner sep=0,minimum size=12pt] at (2,0.4) {4};
\end{tikzpicture}
\end{center}
In this process we made several arbitrary choices of which pairs of chips to fire. As it turns out, no matter what choices we made we would have always reached this same stable configuration where the chips appear in sorted order from left to right. This is a confluence result which says that the divergent paths our process can take must at some later point come back together. It is well known that confluence holds for the unlabeled chip-firing process (on any graph and for any configuration with sufficiently few chips to guarantee that stabilization is possible at all): this is one of the basic results of~\cite{bjorner1991chip}. But confluence for the labeled chip-firing process is much subtler. Indeed, not all initial configurations are confluent in the labeled chip-firing process. This means in particular that the theory of abelian networks does not automatically apply to our situation. To see that not all initial configurations are confluent, consider the configuration that has three chips at the origin rather than four. We can fire~$(1)$ and~$(2)$, or fire~$(1)$ and~$(3)$, or fire~$(2)$ and~$(3)$; in all three cases, the result is a stable configuration, but the labeled chips end up at different vertices, so confluence does not hold in this case. More generally, if we start with an \emph{odd} number of labeled chips at the origin, we can make sure that any preselected chip never moves away from the origin, so confluence does not hold. Our main result, proved in Section~\ref{sec:main}, says that the labeled chip-firing process on $\mathbb{Z}$ is confluent, and in particular sorts the chips, as long as the number of chips is \emph{even}. 

In Section~\ref{sec:extensions} we discuss some extensions of this sorting phenomenon to other graphs (variants of the infinite path) and to other initial configurations. We also show how our main result can be related to Type A root systems and briefly discuss extensions of the problem to other types. Section~\ref{sec:extensions} contains some results, but also many conjectures and threads of future research.

We use the following notation throughout: $\mathbb{R}$ is the set of real numbers, $\mathbb{Z}$ is the set of integers, $\mathbb{N} \coloneqq \{0,1,2,\ldots\}$ is the set of natural numbers, $\mathbb{Z}_{>0} \coloneqq \{1,2,\ldots\}$ is the set of positive integers; for $a,b \in \mathbb{Z}$ we set $[a,b] \coloneqq \{a,a+1,\ldots,b\}$ (which is $\varnothing$ if $a > b$), for $n \in \mathbb{N}$ we set $[n] \coloneqq [1,n]$; and for $x\in\mathbb{R}$ we use $\lfloor x \rfloor$ to denote the greatest integer less than or equal to~$x$.

{\bf Acknowledgements}: We thank Pedro Felzenszwalb, Pavel Galashin, Caroline Klivans, Gregg Musiker, and Peter Winkler for useful comments and discussion. We thank Richard Stanley and the Cambridge Combinatorics and Coffee Club for providing a space to present some initial experimental results related to this research. The first author was supported by NSF grant~\#1122374. The third author was supported by NSF grant~\#1001905.

\section{Main result} \label{sec:main}

One of the main ways we will understand this labeled chip-firing process is by relating it to the usual unlabeled chip-firing process. Therefore, we first review unlabeled chip-firing on the graph $\mathbb{Z}$.  A \emph{configuration of unlabeled chips} on $\mathbb{Z}$ is some assignment of a finite number of indistinguishable chips to the vertices of $\mathbb{Z}$. All configurations, both labeled and unlabeled, will be on $\mathbb{Z}$ in what follows in this section. We use lowercase letters for unlabeled configurations. Formally, we treat an unlabeled configuration $c$ as a function $c \colon \mathbb{Z} \to \mathbb{N}$ with $\sum_i c(i) < \infty$ and we think of $c$ as having $c(i)$ chips at~$i$. We use $\mathrm{supp}(c)$ to denote the \emph{support} of $c$, i.e., $\mathrm{supp}(c) \coloneqq \{i \in \mathbb{Z}\colon c(i) \geq 1\}$. We write $\mathrm{max}(c) \coloneqq \mathrm{max}(\mathrm{supp}(c))$ and $\mathrm{min}(c) \coloneqq \mathrm{min}(\mathrm{supp}(c))$. As is customary, we use the convention~$\mathrm{max}(\varnothing) \coloneqq -\infty$ and~$\mathrm{min}(\varnothing) \coloneqq \infty$.  If $c$ is a configuration and~$i \in \mathbb{Z}$ is some vertex such that~$c$ has at least two chips at $i$, we may perform a \emph{chip-firing move} at $i$, which moves one chip at~$i$ leftward one vertex and one chip at~$i$ rightward one vertex. If the resulting configuration is $c'$ then in this case we also say that $c'$ is obtained from~$c$ by \emph{firing at vertex~$i$}. We write~$c \to d$ to mean that $d$ is obtained from~$c$ by some sequence of (zero or more) chip-firing moves. We say $c$ is \emph{stable} if we cannot perform any chip-firing moves on $c$, that is, if~$c \to d$ implies that~$c=d$. Of course, the map~$c \mapsto \mathrm{supp}(c)$ is a bijection between stable configurations of $n$ chips and subsets of~$\mathbb{Z}$ of size~$n$. 

We now define some useful statistics of configurations:
\begin{align*}
\varphi_{\ell}(c)  &\coloneqq \sum_{i \leq \ell} (i - \ell-1)\cdot c(i) \; \; \textrm{for all $\ell \in \mathbb{Z}$}; \\
\varphi_{\infty}(c) &\coloneqq \sum_{i \in \mathbb{Z}} i \cdot c(i); \\
\varphi^2_{\infty}(c) &\coloneqq \sum_{i \in \mathbb{Z}} i^2 \cdot c(i); \\
\gamma(c) &\coloneqq \#\{i \in \mathbb{Z}\colon \mathrm{min}(c) \leq i \leq \mathrm{max}(c) \textrm{ and } \{i,i+1\} \cap \mathrm{supp}(c) = \varnothing\}.
\end{align*}

\begin{prop} \label{prop:statistics}
Suppose $c'$ is obtained from $c$ by firing at vertex $j \in \mathbb{Z}$; then:
\begin{enumerate}
\item $\varphi_{\ell}(c') = \begin{cases} \varphi_{\ell}(c) - 1 &\textrm{if $j = \ell+1$}; \\ \varphi_{\ell}(c) &\textrm{otherwise}; \end{cases}$
\item $\varphi_{\infty}(c') = \varphi_{\infty}(c)$;
\item $\varphi^2_{\infty}(c') = \varphi^2_{\infty}(c) + 2$;
\item $\gamma(c') \leq \gamma(c)$.
\end{enumerate}
\end{prop}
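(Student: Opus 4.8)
The plan is to use the fact that firing at $j$ is a local operation: the resulting configuration $c'$ satisfies $c'(j-1) = c(j-1)+1$, $c'(j) = c(j)-2$, and $c'(j+1) = c(j+1)+1$, with $c'(i) = c(i)$ for all other $i$. Recall that firing requires $c(j) \ge 2$, so in particular $j$ is occupied before the move and both $j-1$ and $j+1$ are occupied after it; these occupancy facts will be the crux of part (4). Parts (2) and (3) are then immediate one-line computations: the net change in $\varphi_\infty$ is $(j-1) - 2j + (j+1) = 0$, and the net change in $\varphi^2_\infty$ is $(j-1)^2 - 2j^2 + (j+1)^2 = 2$.

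For part (1), since $\varphi_\ell$ only sums the weight $(i - \ell - 1)$ over vertices $i \le \ell$, I would split into cases according to the position of $j$ relative to $\ell$. When $j+1 \le \ell$ all three affected vertices lie in the range and their weighted change $(j-\ell-2) - 2(j-\ell-1) + (j-\ell)$ cancels to $0$; when $j = \ell$ the added chip at $j+1$ falls outside the range but the two remaining terms still cancel; when $j = \ell+1$ only the added chip at $j-1 = \ell$ contributes, with weight $\ell - \ell - 1 = -1$, yielding the asserted decrease by $1$; and when $j \ge \ell + 2$ none of the affected vertices lie in the range, so $\varphi_\ell$ is unchanged. This is routine but does require checking all four cases.

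Part (4) is the delicate one and is where I expect the main obstacle. Reading the definition, $\gamma(c)$ counts indices $i \in [\mathrm{min}(c), \mathrm{max}(c)]$ forming an adjacent ``both-empty'' pair $c(i) = c(i+1) = 0$. Because $c$ and $c'$ agree except at $j-1, j, j+1$, the both-empty indicator of a pair $\{i, i+1\}$ can change only for $i \in \{j-2, j-1, j, j+1\}$, and the range of summation can change only because firing may extend $\mathrm{min}$ by one (when $j = \mathrm{min}(c)$) or $\mathrm{max}$ by one (when $j = \mathrm{max}(c)$). Using the occupancy facts above, after firing each of the four candidate pairs contains an occupied vertex ($j-1$ or $j+1$), so none of them is both-empty afterwards; meanwhile the two pairs straddling $j$ were already not both-empty beforehand since $j$ was occupied. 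Thus each affected pair contributes at most as much to $\gamma(c')$ as to $\gamma(c)$. The remaining care, and the real content of the argument, is the boundary bookkeeping: I must verify that extending the support interval cannot introduce a newly counted gap. The only indices newly brought into range are $i = j-1$ (on the left) and $i = j+1$ (on the right), and in each case the freshly occupied vertex $j-1$ or $j+1$ prevents the corresponding pair from being both-empty. Combining the per-pair bound with this boundary check gives $\gamma(c') \le \gamma(c)$.
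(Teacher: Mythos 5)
Your proof is correct and complete; the paper itself dismisses these claims as ``routine to verify,'' and your direct case analysis is exactly the intended verification, so there is no divergence in approach to report. In particular, your treatment of part (4) --- observing that every pair whose both-empty indicator could change contains the freshly occupied vertex $j-1$ or $j+1$, and that the support interval can only grow (by at most one vertex on each side, with each newly in-range pair again containing a freshly occupied vertex) --- correctly handles the only step requiring genuine care.
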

\begin{proof}
These are routine to verify.
\end{proof}

The following confluence property of the chip-firing process is well-known; for instance, it can be deduced from the main results of~\cite{bjorner1991chip}. It was also proven in a special case by Anderson et al.~\cite{anderson1989disks}, but the proof in general is more or less the same as that special case. We include a short proof, based on Newman's lemma~\cite{newman1942theories}~\cite[Lemma~2.4]{huet1980confluent} (a.k.a.~the diamond lemma), for completeness.

\begin{lemma} \label{lem:stab}
For any configuration $c$, there is a unique stable $d$ with $c \to d$.
\end{lemma}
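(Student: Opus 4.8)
The plan is to invoke Newman's lemma (the diamond lemma), which says that a rewriting system is confluent provided it is (a) \emph{terminating} (no infinite sequence of moves exists) and (b) \emph{locally confluent} (whenever a single configuration $c$ admits two distinct one-step firings $c \to c_1$ and $c \to c_2$, there is a common configuration $d$ with $c_1 \to d$ and $c_2 \to d$). Together with termination, confluence gives both existence and uniqueness of the stable configuration reachable from $c$: termination guarantees that \emph{some} stable configuration is reachable, and confluence forces any two such to coincide. So the proof splits into verifying these two hypotheses.

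First I would establish termination. The key tool is the statistic $\varphi^2_{\infty}$ from Proposition~\ref{prop:statistics}(3): each firing move increases $\varphi^2_{\infty}(c)$ by exactly $2$. Thus no configuration can repeat along a firing sequence, so a priori there are no cycles. To rule out \emph{infinite} non-repeating sequences I need a uniform bound on how far chips can spread. Here I would argue that the support stays confined: since $\varphi_{\infty}(c)$ is preserved (Proposition~\ref{prop:statistics}(2)) and $\varphi^2_{\infty}$ can only grow, and since starting from $n$ chips the total "second moment" is bounded in terms of the extreme occupied vertices, the chips can never escape beyond a fixed interval $[\mathrm{min}(c) - N, \mathrm{max}(c) + N]$ for some $N$ depending only on the number of chips. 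With finitely many chips confined to finitely many vertices, there are only finitely many configurations, and since $\varphi^2_{\infty}$ strictly increases at every step, every firing sequence must terminate.

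Next I would verify local confluence. Suppose $c$ admits firings at two vertices $i$ and $j$. If $i = j$, the two firings are literally the same move (firing is deterministic on unlabeled configurations), so there is nothing to check. If $i \neq j$, the two moves affect disjoint or overlapping sets of vertices, but crucially each move only requires that its firing vertex carry at least two chips, and firing at $i$ does not decrease the chip count at $j$ below what is needed to fire there (and symmetrically). Concretely, I would show that firing at $i$ and then at $j$ yields the same configuration as firing at $j$ and then at $i$: the two moves commute as operations on $c \colon \mathbb{Z} \to \mathbb{N}$ because their net effects on each vertex's chip count simply add. The only subtlety is confirming that each move remains \emph{legal} after the other is performed, i.e.\ that firing at $i$ leaves at least two chips available at $j$ for the second firing; this holds because firing at $i$ changes $c(j)$ by at most $+1$ (if $j \in \{i-1, i+1\}$) and never decreases it.

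The main obstacle I anticipate is the termination argument, specifically making rigorous the confinement of the support. Local confluence is essentially a one-line commuting-moves verification, but termination requires ruling out infinite sequences, and the naive observation that $\varphi^2_{\infty}$ increases is not by itself enough without a bound on the spread. I would therefore focus on showing that the extreme positions $\mathrm{min}(c)$ and $\mathrm{max}(c)$ cannot wander arbitrarily far: intuitively, a leftmost chip can only move left if it has a companion, which forces enough mass to remain centrally located, and this can be made precise by bounding $\varphi^2_{\infty}$ above in terms of the fixed quantities (number of chips and $\varphi_{\infty}$). Once the support is trapped in a finite window, finiteness of the state space plus strict monotonicity of $\varphi^2_{\infty}$ closes the argument cleanly.
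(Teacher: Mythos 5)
Your overall architecture matches the paper's exactly: Newman's lemma, with local confluence (your commuting-moves verification is fine, and in fact slightly more explicit than the paper's one-line version) plus termination, where acyclicity comes from the fact that $\varphi^2_{\infty}$ increases by exactly $2$ with every firing. The genuine gap is in the one step you yourself flag as the crux: confining the chips to a finite window. Your proposed route is to ``bound $\varphi^2_{\infty}$ above in terms of the fixed quantities (number of chips and $\varphi_{\infty}$),'' but no such bound exists: the configuration with one chip at $-M$ and one chip at $M$ has $\varphi_{\infty}=0$ and $\varphi^2_{\infty}=2M^2$ for every $M$, so $\varphi^2_{\infty}$ admits no bound in terms of $n$ and $\varphi_{\infty}$ alone. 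Conservation of $\varphi_{\infty}$ together with monotonicity of $\varphi^2_{\infty}$ genuinely cannot rule out unbounded spreading (conservation of $\varphi_{\infty}$ only gives the one-sided constraints that $\mathrm{min}(d)$ lies at or below the fixed average position and $\mathrm{max}(d)$ at or above it; nothing there prevents the support from stretching in both directions simultaneously). What is needed is an additional quantity, monotone under firing, that controls the \emph{diameter} of the support, and your sketch never produces one; the intuition ``a leftmost chip can only move left if it has a companion'' is asserted rather than converted into a proof.

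The paper fills exactly this hole with the statistic $\gamma(c)$, the number of positions $i$ with $\mathrm{min}(c) \leq i \leq \mathrm{max}(c)$ and $\{i,i+1\}\cap\mathrm{supp}(c)=\varnothing$: Proposition~\ref{prop:statistics}(4) says $\gamma$ never increases under a firing move. Since a configuration of $n$ chips has at most $n$ occupied sites, hence at most $n-1$ maximal runs of empty sites inside the occupied range, and each such run of length $g$ contributes $g-1$ to $\gamma$, one gets $\mathrm{max}(d)-\mathrm{min}(d)\leq \gamma(d)+2n-2\leq\gamma(c)+2n-2$ for every reachable $d$. Combined with the center-of-mass constraints from $\varphi_{\infty}$, this traps all reachable configurations in a fixed finite interval, so there are only finitely many of them, and acyclicity (your $\varphi^2_{\infty}$ argument) then forces termination. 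To repair your proof you would need to introduce $\gamma$ or some equivalent monovariant; the specific bound you propose to prove instead is false as stated.
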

\begin{proof}
Let us write $c \xrightarrow{F} c'$ if $c'$ is obtained from $c$ by a single chip-firing move. Then~$\to$ is the reflexive transitive closure of $\xrightarrow{F}$. Thus, Newman's lemma~\cite{newman1942theories}~\cite[Lemma 2.4]{huet1980confluent} says that $\xrightarrow{F}$ is \emph{confluent}, meaning that whenever $c \to d$ and $c \to d'$, there exists $d''$ such that $d \to d''$ and $d' \to d''$, as long as the following two conditions hold:
\begin{itemize}
\item $\xrightarrow{F}$ is \emph{locally confluent}, i.e., whenever $c \xrightarrow{F} d$ and $c \xrightarrow{F} d'$, there exists $d''$ such that $d \to d''$ and $d' \to d''$;
\item $\xrightarrow{F}$ is \emph{terminating} (also sometimes called \emph{noetherian}), i.e., there is no infinite sequence $c_0 \xrightarrow{F} c_1 \xrightarrow{F} c_2 \xrightarrow{F} \cdots$. 
\end{itemize}
It easy to see that $\xrightarrow{F}$ is locally confluent. Indeed, suppose $d$ is obtained from $c$ by firing at vertex~$i$ and $d'$ is obtained by firing at vertex~$j$. If $i=j$ then $d=d'$ and so we can take $d'' \coloneqq d$. On the other hand, if $i \neq j$ then $j$ remains fireable in $d$ and we can let $d''$ be the result of firing~$j$ in $d$.

Now we prove that $\xrightarrow{F}$ is terminating. To show $\xrightarrow{F}$ is terminating it suffices to show the following:
\begin{itemize}
\item $\xrightarrow{F}$ is \emph{acyclic}, i.e., there is no cycle $c \xrightarrow{F} c_1 \xrightarrow{F} c_2 \xrightarrow{F}  \cdots \xrightarrow{F} c_k = c$ for any $k\geq 1$;
\item $\xrightarrow{F}$ is is \emph{globally finite}, i.e., for any $c$ there are only finitely many $d$ with $c \to d$.
\end{itemize}
 Let $c$ be a configuration of $n$ chips. If $n=0$ there is nothing to show, so suppose that~$n > 1$.  To see that there cannot be a cycle of firings from $c$ to itself, recall from Proposition~\ref{prop:statistics} that $c \xrightarrow{F} d$ means $\varphi^2_{\infty}(d) = \varphi^2_{\infty}(c) + 2$. So any purported cycle at configuration~$c$ of length $k \geq 1$ would lead to $\varphi^2_{\infty}(c) = \varphi^2_{\infty}(c) + 2k$, a contradiction. Now let us show that there are only finitely many $d$ with $c \to d$. Recall from Proposition~\ref{prop:statistics} that $c \to d$ implies $\varphi_{\infty}(d) = \varphi_{\infty}(c)$. This in particular implies that there is some~$B_1 \in \mathbb{N}$ such that $\mathrm{min}(d) \leq B_1$ and~$\mathrm{max}(d) \geq -B_1$ for any $c \to d$. Also recall from Proposition~\ref{prop:statistics} that $c \to d$ implies $\gamma(d) \leq \gamma(c)$. This in particular implies that there is~$B_2 \in \mathbb{N}$ such that $\mathrm{max}(d) - \mathrm{min}(d) \leq B_2$ for all $c \to d$. Altogether, we can conclude that there is some~$B_3 \in \mathbb{N}$ such that $\mathrm{max}(d) \leq B_3$ and $\mathrm{min}(d) \geq -B_3$ for all~$c \to d$. Clearly there are only finitely many configurations $d$ of $n$ chips satisfying $\mathrm{max}(d) \leq B_3$ and~$\mathrm{min}(d) \geq -B_3$ so indeed there can only be finitely many $d$ with $c \to d$.

That $\xrightarrow{F}$ is terminating directly implies that for each $c$ there exists some stable~$d$ with $c \to d$. Finally, the confluence of $\xrightarrow{F}$ implies that this $d$ must be unique.
\end{proof}

We use $\widetilde{c}$ to denote the \emph{stabilization} of $c$, i.e.~the unique stable~$d$ with $c \to d$ guaranteed by Lemma~\ref{lem:stab}. A fact (which follows immediately from Lemma~\ref{lem:stab}) that we will use over and over again is that if $c \to d$ then $\widetilde{d} = \widetilde{c}$.

Let us introduce some notation for specific configurations of unlabeled chips. For two configurations~$c$ and~$d$, let us use~$c+d$ to denote their \emph{sum}, i.e., the configuration with $(c+d)(i) = c(i) + d(i)$ for all~$i \in \mathbb{Z}$. It is clear that if $c \to c'$ then $c + d \to c' + d$. For~$n \in \mathbb{N}$ and a configuration $c$, we use the shorthand $nc \coloneqq \overbrace{c + c + \cdots c}^{n \textrm{ terms }}$. For $i \in \mathbb{Z}$ we let $\delta_i$ denote the configuration that has a single chip at $i$ and no other chips; in other words, $\delta_i$ is the unique stable configuration with $\mathrm{supp}(\delta_i) = \{i\}$. For $i,j \in \mathbb{Z}$, we let~$\delta_{[i,j]}$ denote the configuration that has one chip at vertex~$k$ for all $i \leq k \leq j$ and no other chips; in other words $\delta_{[i,j]}$ is the unique stable configuration with $\mathrm{supp}(\delta_{[i,j]}) = [i,j]$. Note in particular that $\delta_i = \delta_{[i,i]}$.

We now describe some formulas for specific stabilizations that will be needed later.

\begin{prop} \label{prop:linestab}
Suppose that $c = \delta_{[a+1,b-1]} + \delta_i$, where $a,b,i \in \mathbb{Z}$ satisfy $a < b$, $a \leq i$, and~$i \leq b$. Then we have~$\widetilde{c} = \delta_{[a,a+b-i-1]} + \delta_{[a+b-i+1,b]}$.
\end{prop}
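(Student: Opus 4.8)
The plan is to reduce the statement to a single fact about the endpoints of $\widetilde{c}$ and then pin down the answer using conservation of the center of mass. First I would dispose of the boundary cases $i = a$ and $i = b$: in each of these $c$ is already stable (it equals $\delta_{[a,b-1]}$ or $\delta_{[a+1,b]}$ respectively), and one of the two intervals in the claimed answer is empty, so the formula holds by inspection. From now on assume $a+1 \leq i \leq b-1$, so that $c$ has a single unstable vertex, namely $i$, which carries two chips.

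The key reduction is this: $\widetilde{c}$ is stable and has $n = b-a$ chips, so (being stable) it has at most one chip per vertex, and hence its support has exactly $b-a$ elements. Consequently it suffices to prove that $\mathrm{min}(\widetilde{c}) = a$ and $\mathrm{max}(\widetilde{c}) = b$. Granting this, $\mathrm{supp}(\widetilde{c}) \subseteq [a,b]$ is a set of $b-a$ vertices inside an interval of $b-a+1$ vertices, so it is $[a,b]$ with exactly one vertex $g$ deleted; that is, $\widetilde{c} = \delta_{[a,g-1]} + \delta_{[g+1,b]}$. To locate $g$ I would use that $\varphi_\infty$ is conserved under firing (Proposition~\ref{prop:statistics}(2)): a direct computation gives $\varphi_\infty(c) = i + \sum_{k=a+1}^{b-1} k$ and $\varphi_\infty\bigl(\delta_{[a,g-1]}+\delta_{[g+1,b]}\bigr) = \bigl(\sum_{k=a}^{b} k\bigr) - g$, and equating the two forces $g = a+b-i$. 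This is exactly the claimed $\widetilde{c}$.

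It remains to establish the two endpoint identities, and by the reflection symmetry $k \mapsto a+b-k$ (which maps this family of configurations to itself, sending $i$ to $a+b-i$ and interchanging $\mathrm{min}$ and $\mathrm{max}$) it is enough to prove $\mathrm{max}(\widetilde{c}) = b$. I expect this to be the main obstacle, since the monotone statistics of Proposition~\ref{prop:statistics} do not by themselves determine the spread of $\widetilde{c}$: one must actually control the dynamics near the right boundary and show both that some chip reaches vertex $b$ and that no chip ever moves past it. The cleanest route I see is to guess the odometer (the function recording how many times each vertex fires) explicitly as $u(v) = \max\bigl(0,\,\min(v-a,\,b-v,\,i-a,\,b-i)\bigr)$, a concave piecewise-linear ``tent'' supported on $[a+1,b-1]$; a routine second-difference computation then checks that $c + \Delta u$ equals the claimed stable configuration, where $\Delta u(v) = u(v-1) - 2u(v) + u(v+1)$. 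The remaining and genuinely delicate point is to verify that this firing vector is realized by an \emph{actual} legal firing sequence, i.e.\ that one can fire vertices in some order so that each fires the prescribed number of times and no vertex is ever fired while holding fewer than two chips. Once such a sequence is exhibited we have $c \to \widetilde{c}$ with $\widetilde{c}$ as claimed, and Lemma~\ref{lem:stab} finishes the proof. A self-contained alternative that sidesteps the realizability question is to prove $\mathrm{max}(\widetilde{c}) = b$ by induction on $\min(i-a,\,b-i)$: the base case $i = a+1$ (resp.\ $i = b-1$) is the clean single monotone sweep that fires $a+1, a+2, \ldots, b-1$ in order and lands on $\delta_{[a,b-2]} + \delta_b$, while the inductive step analyzes the leftward-and-rightward cascade triggered by firing at $i$.
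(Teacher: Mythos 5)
Your reduction is correct and economical: the boundary cases are right, a stable configuration on $\mathbb{Z}$ has at most one chip per vertex so $\mathrm{supp}(\widetilde{c})$ has exactly $b-a$ elements, the reflection $k \mapsto a+b-k$ does commute with chip-firing and swaps the two endpoint claims, and the $\varphi_\infty$-computation pinning the missing vertex at $g = a+b-i$ is exactly right (Proposition~\ref{prop:statistics}(2)). In fact you need slightly less than you ask for: the inequalities $\mathrm{min}(\widetilde{c}) \geq a$ and $\mathrm{max}(\widetilde{c}) \leq b$ already suffice, since then $\mathrm{supp}(\widetilde{c})$ is a $(b-a)$-subset of the $(b-a+1)$-element interval $[a,b]$, and conservation of $\varphi_\infty$ locates the unique empty vertex; the fact that chips actually reach $a$ and $b$ comes for free. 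Your guessed odometer $u(v) = \max\bigl(0,\min(v-a,\,b-v,\,i-a,\,b-i)\bigr)$ is also the correct one.

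The problem is that neither of your two routes actually proves the one statement carrying all the dynamical content, namely that no chip ever leaves $[a,b]$. In Route A, checking $c + \Delta u$ equals the claimed configuration is indeed routine, but you explicitly defer the realizability of $u$ by a legal firing sequence, and that deferred step \emph{is} the theorem: knowing merely that some nonnegative $u$ makes $c+\Delta u$ stable does not identify $\widetilde{c}$ (one would need a least-action principle, which the paper never develops and you do not prove, to conclude even that the true odometer is $\leq u$). In Route B, the base case $i=a+1$ is a genuine legal sweep and fine, but ``the inductive step analyzes the leftward-and-rightward cascade triggered by firing at $i$'' is a placeholder, not an argument. So as written there is a real gap. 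The paper closes exactly this gap with an induction on $b-a$ requiring no odometer and no endpoint analysis: fire once at $i$, note the result splits as $\bigl(\delta_{[a+1,i-1]}+\delta_{i-1}\bigr) + \bigl(\delta_{i+1}+\delta_{[i+1,b-1]}\bigr)$, stabilize each summand by the inductive hypothesis (legal because $c \to c'$ implies $c+d \to c'+d$), obtaining $\delta_a + \bigl(\delta_{[a+2,b-2]}+\delta_i\bigr) + \delta_b$, apply the hypothesis once more to the middle summand, and observe the result is stable, so Lemma~\ref{lem:stab} forces it to be $\widetilde{c}$. If you fleshed out your Route B inductive step you would be led to essentially this decomposition, so the cleanest fix is to adopt it outright; your counting-plus-$\varphi_\infty$ reduction then becomes unnecessary, though it is a nice consistency check.
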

\begin{proof}
We prove this by induction on $b-a$. If $b-a=1$ the proposition is clear because then $c = \widetilde{c} = \delta_i$. So assume $b-a>1$ and the result is known for smaller values of~$b-a$. If $i=a$ or $i=b$ the proposition is also clear because then $c = \widetilde{c}$. So assume that~$a < i < b$. Set $c' \coloneqq \delta_{[a+1,i-1]} + \delta_{i-1}$ and $c'' \coloneqq \delta_{i+1} + \delta_{[i+1,b-1]}$. By firing at vertex~$i$ we see that~$c \to c' + c''$. Applying the inductive hypothesis gives
\begin{align*}
\widetilde{c'} &= \delta_a + \delta_{[a+2,i]}, \\
\widetilde{c''} &= \delta_{[i,b-2]} + \delta_b.
\end{align*}
So $c \to \delta_a + \delta_{[a+2,i]} + \delta_{[i,b-2]} + \delta_b = \delta_a + c''' + \delta_b$ where $c''' \coloneqq \delta_{[a+2,b-2]} + \delta_i$. Applying the inductive hypothesis again gives
\[ \widetilde{c'''} = \delta_{[a+1,a+b-i-1]} + \delta_{[a+b-i+1,b-1]}, \]
so that $c \to \delta_{[a,a+b-i-1]} + \delta_{[a+b-i+1,b]}$. But $\delta_{[a,a+b-i-1]} + \delta_{[a+b-i+1,b]}$ is stable, which means we must have~$\widetilde{c} = \delta_{[a,a+b-i-1]} + \delta_{[a+b-i+1,b]}$.
\end{proof}

The unlabeled configuration we are most interested in is $n\delta_0$. The following description of the stabilization of $n\delta_0$ is also well known, appearing for instance in the original paper of Anderson et al.~\cite{anderson1989disks}. For completeness we provide a short proof of this lemma using the previous proposition.

\begin{lemma} \label{lem:deltastab}
For all $n \geq 1$ we have,
\[\widetilde{n\delta_0} = \begin{cases} \delta_{[-m,-1]} + \delta_{[1, m]} &\textrm{if $n=2m$ is even};\\
\delta_{[-m, m]} &\textrm{if $n=2m+1$ is odd}.\end{cases}\]
\end{lemma}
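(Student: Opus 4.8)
The plan is to prove Lemma~\ref{lem:deltastab} by induction on $n$, using Proposition~\ref{prop:linestab} as the engine for each inductive step. The key observation is that $n\delta_0$ can always be rewritten, after a single firing, as a sum of a ``spread-out'' configuration plus a leftover chip at the origin, which is exactly the shape to which Proposition~\ref{prop:linestab} applies. Since $\widetilde{c}$ depends only on the reachability relation $c \to d$ (and $c \to d$ implies $\widetilde{d} = \widetilde{c}$), I am free to fire in whatever order is convenient and then invoke the stabilization formula.

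First I would handle the base cases $n=1$ (where $\delta_0$ is already stable, matching $\delta_{[0,0]}$) and $n=2$ (firing once at the origin yields $\delta_{-1}+\delta_1 = \delta_{[-1,-1]}+\delta_{[1,1]}$, which is stable). For the inductive step with $n\geq 2$, I would take the configuration $n\delta_0 = (n-2)\delta_0 + 2\delta_0$ and first stabilize the $(n-2)\delta_0$ part using the inductive hypothesis, giving either $\delta_{[-m+1,-1]}+\delta_{[1,m-1]}$ (when $n-2 = 2(m-1)$) or $\delta_{[-m,m]}$ with the index shifted appropriately (when $n-2$ is odd). Then I would add back the two remaining chips at the origin and fire them apart, producing a configuration of the form $\delta_{[a+1,b-1]} + \delta_i$ to which Proposition~\ref{prop:linestab} applies directly.

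The cleaner route, which I would actually pursue, is to split by parity from the start and use the recursion in one step. In the even case $n=2m$, I would write $n\delta_0 \to \delta_{[-(m-1),m-1]} + 2\delta_0$ is not quite right, so instead I observe that after stabilizing $(n-2)\delta_0$ and adding $2\delta_0$, a single firing at the origin gives $\delta_{-1} + (\text{stabilized piece}) + \delta_1$, and the middle remains of the form $\delta_{[\cdot,\cdot]} + \delta_0$. Concretely, in the odd case $n = 2m+1$, the inductive hypothesis gives $\widetilde{(n-2)\delta_0} = \delta_{[-(m-1),m-1]}$, so $n\delta_0 \to \delta_{[-(m-1),m-1]} + 2\delta_0$, and one firing at $0$ yields $\delta_{-1} + \delta_{[-(m-1),m-1]} + \delta_1$, which is $\delta_{[-(m-1),m-1]} + \delta_0$ shifted — a configuration of the form in Proposition~\ref{prop:linestab} with $a = -m$, $b = m$, $i = 0$, giving $\widetilde{c} = \delta_{[-m,m]}$ as desired. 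The even case is analogous with $a,b,i$ chosen to produce the two disjoint intervals.

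The main obstacle I anticipate is purely bookkeeping: matching the interval endpoints $[a, a+b-i-1]$ and $[a+b-i+1, b]$ from Proposition~\ref{prop:linestab} to the target intervals $[-m,-1]$ and $[1,m]$ by choosing $a$, $b$, and $i$ correctly, and verifying that the hypotheses $a < b$, $a \leq i$, $i \leq b$ hold at each stage. There is no conceptual difficulty once the right decomposition is fixed; the only care needed is to ensure that the configuration fed into Proposition~\ref{prop:linestab} genuinely has the single ``extra'' chip at the correct vertex $i$ relative to the surrounding full interval $[a+1,b-1]$, and that the parities line up so that the gap at $a+b-i$ falls exactly at the origin in the even case (yielding the two separated intervals) versus being filled in the odd case.
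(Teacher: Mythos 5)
Your overall strategy---induction on $n$ with Proposition~\ref{prop:linestab} supplying the inductive step---is the same as the paper's, but the step you actually wrote down fails in the odd case, and the failure is not mere bookkeeping. After invoking the inductive hypothesis you have $\delta_{[-(m-1),m-1]} + 2\delta_0$, i.e.\ \emph{three} chips at the origin. Firing once at $0$ produces $\delta_{[-(m-1),m-1]} + \delta_{-1} + \delta_1$, which has two doubly-occupied vertices and hence is \emph{not} of the form $\delta_{[a+1,b-1]} + \delta_i$; Proposition~\ref{prop:linestab} simply does not apply to it. Moreover, the conclusion you draw is false on its face: with $a=-m$, $b=m$, $i=0$ the proposition concerns the $2m$-chip configuration $\delta_{[-(m-1),m-1]}+\delta_0$ (your configuration has $2m+1$ chips), and its stabilization is $\delta_{[-m,-1]}+\delta_{[1,m]}$, not $\delta_{[-m,m]}$---the output of Proposition~\ref{prop:linestab} always has a gap at $a+b-i$, which is strictly between $a$ and $b$ unless $i\in\{a,b\}$, so it never produces a single full interval from an unstable input. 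The even case as written is also off: you should \emph{not} fire at the origin at all, since $\widetilde{(n-2)\delta_0}+2\delta_0 = \delta_{[-(m-1),-1]}+\delta_{[1,m-1]}+2\delta_0 = \delta_{[-(m-1),m-1]}+\delta_0$ is \emph{already} of the required form (the two new chips fill the gap at $0$ and leave one excess chip); firing at $0$ destroys that form, contrary to your claim that ``the middle remains of the form $\delta_{[\cdot,\cdot]}+\delta_0$.''

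The repair is easy and lands you essentially on the paper's proof. Either induct in steps of one chip: for $n=2m$ the hypothesis gives $\widetilde{(n-1)\delta_0}=\delta_{[-(m-1),m-1]}$, and Proposition~\ref{prop:linestab} applied to $\delta_{[-(m-1),m-1]}+\delta_0$ gives $\delta_{[-m,-1]}+\delta_{[1,m]}$; for $n=2m+1$ the hypothesis gives $\widetilde{(n-1)\delta_0}=\delta_{[-m,-1]}+\delta_{[1,m]}$, and adding $\delta_0$ yields $\delta_{[-m,m]}$, which is already stable, so no appeal to Proposition~\ref{prop:linestab} is needed there. Or, if you want to keep your two-chip step, handle the odd case in two stages: first stabilize $\delta_{[-(m-1),m-1]}+\delta_0$, ignoring one of the two added chips, to get $\delta_{[-m,-1]}+\delta_{[1,m]}$ by Proposition~\ref{prop:linestab} (using that $c\to c'$ implies $c+d\to c'+d$, so this firing sequence is legal in the presence of the spectator chip); then the remaining chip at the origin exactly fills the gap, giving the stable configuration $\delta_{[-m,m]}$. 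The real danger in this lemma is not the endpoint bookkeeping you flagged, but applying Proposition~\ref{prop:linestab} to a configuration with more than one excess chip.
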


\begin{proof}
The proof is by induction on $n$. The case $n=1$ is clear; so suppose~$n>1$ and the result is known for $n-1$. Set $c\coloneqq\widetilde{(n\hspace{-0.1cm}-\hspace{-0.1cm}1)\delta_0}$. We have~$\widetilde{n\delta_0} = \widetilde{c+\delta_0}$. If $n=2m$ is even then by induction~$\widetilde{c} = \delta_{[-(m-1),m-1]}$, so~$\widetilde{c+\delta_0} =  \delta_{[-m,-1]} + \delta_{[1, m]}$ by  Proposition~\ref{prop:linestab}. If $n = 2m+1$ is odd, then by induction we have~$\widetilde{c} = \delta_{[-m,-1]} + \delta_{[1,m]}$ and so clearly we have~$\widetilde{c+\delta_0} = c = [-m,m]$.
\end{proof}

Not only is it the case that for any configuration $c$, its stabilization $\widetilde{c}$ is unique, but also, all stabilization sequences $c \to \widetilde{c}$ have the same total number of vertex firings; and what is more, the number of times any given vertex $j \in \mathbb{Z}$ fires in a stabilization sequence $c \to \widetilde{c}$ is also determined. Indeed, it is easy to see from Proposition~\ref{prop:statistics} that the total number of vertex firings in a stabilization sequence $c \to \widetilde{c}$ is $\frac{1}{2}(\varphi^{2}_{\infty}(\widetilde{c}) - \varphi^{2}_{\infty}(c))$, and the number of times $j \in \mathbb{Z}$ fires is $\varphi_{j+1}(c)-\varphi_{j+1}(\widetilde{c})$. Again, for the infinite path, these facts were more or less established in~\cite{anderson1989disks}. These facts continue to hold for chip-firing on arbitrary graphs, as was first established in~\cite{bjorner1991chip}. More generally, that the ``run time'' and ``local run times'' do not depend on the particular way the system evolves is true for all abelian networks; see~\cite[\S2]{bond2016abelian1}. In the following proposition we record these run times and local run times for the initial configuration $n\delta_0$ we are interested in. (We will not use these run times or local run times in what follows, except to point out that the labeled chip-firing sorting algorithm takes cubic time and is thus highly infeasible.)

\begin{prop} \label{prop:runtime}
Let $n \geq 1$ and set $m \coloneqq \lfloor n/2\rfloor$. Then in any stabilization sequence $n\delta_0 \to \widetilde{n\delta_0}$, the total number of vertex firings is $m(m+1)(2m+1)/6$ and the number of times that vertex $j \in \mathbb{Z}$ fires is $(m+1-|j|)(m-|j|)/2$ if $|j| < m$ and $0$ otherwise.
\end{prop}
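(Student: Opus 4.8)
The plan is to use the two exact formulas recorded earlier in the excerpt. From the discussion immediately preceding the proposition, the total number of vertex firings in any stabilization sequence $c \to \widetilde{c}$ equals $\tfrac{1}{2}(\varphi^2_\infty(\widetilde{c}) - \varphi^2_\infty(c))$, and the number of times vertex $j$ fires equals $\varphi_{j+1}(c) - \varphi_{j+1}(\widetilde{c})$. So the whole proof reduces to evaluating these statistics on the specific configurations $c = n\delta_0$ and $\widetilde{c} = \widetilde{n\delta_0}$, where the latter is given explicitly by Lemma~\ref{lem:deltastab}. There is no confluence or dynamics left to prove; it is pure bookkeeping with the definitions of $\varphi_\ell$ and $\varphi^2_\infty$.

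First I would compute the local run times. Since $n\delta_0$ has all its chips at the origin, $\varphi_{j+1}(n\delta_0) = \sum_{i \le j+1}(i-j-2)\cdot (n\delta_0)(i)$ picks up only the $i=0$ term, giving $n\cdot(-j-2)$ when $j+1 \ge 0$ (i.e.\ $j \ge -1$) and $0$ when $j \le -2$. Then I would evaluate $\varphi_{j+1}(\widetilde{n\delta_0})$ using the support description from Lemma~\ref{lem:deltastab}: the stabilization is a union of two blocks of consecutive integers (or one centered block in the odd case), so $\varphi_{j+1}$ is a sum of terms $(i-j-2)$ over those support vertices $i \le j+1$, which is an arithmetic-series computation. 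Taking the difference $\varphi_{j+1}(n\delta_0) - \varphi_{j+1}(\widetilde{n\delta_0})$ should collapse to the claimed value $(m+1-|j|)(m-|j|)/2$ for $|j| < m$ and $0$ otherwise. I expect to treat the even and odd cases with the unified parameter $m = \lfloor n/2 \rfloor$, and it will be cleanest to exploit the left-right symmetry of both $n\delta_0$ and $\widetilde{n\delta_0}$ so that it suffices to handle $j \ge 0$ and then invoke symmetry for $j < 0$ (which matches the $|j|$ appearing in the answer).

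For the total run time, the quickest route is to sum the local run times just obtained: $\sum_{j \in \mathbb{Z}} (m+1-|j|)(m-|j|)/2 = \tfrac{1}{2}\sum_{|j|<m}(m+1-|j|)(m-|j|)$. Writing $k = m - |j|$ and accounting for the two signs of $j$, this becomes a standard sum of the form $\sum_{k=1}^{m} k(k+1)$, which evaluates in closed form to $m(m+1)(m+2)/3$; after the factor $\tfrac12$ and careful index handling the target $m(m+1)(2m+1)/6$ should emerge. Alternatively one can compute $\varphi^2_\infty(n\delta_0) = 0$ directly and $\varphi^2_\infty(\widetilde{n\delta_0}) = \sum_{i} i^2$ over the two support blocks, then halve the difference; I would keep this as a cross-check.

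The main obstacle is not conceptual but combinatorial: keeping the arithmetic-series index ranges and the parity cases straight so that the $+1$'s and floor functions line up exactly with the stated formula. In particular the boundary behavior at $|j| = m$ (where the count must vanish) and the distinction between the even block $\delta_{[-m,-1]}+\delta_{[1,m]}$ and the odd block $\delta_{[-m,m]}$ are where sign errors are most likely to creep in, so I would verify the final expression against a small case such as $n=4$ ($m=2$), where the introductory example already tells us vertex $0$ fires $(3)(2)/2 = 3$ times and vertices $\pm 1$ fire $(2)(1)/2 = 1$ time each, for a total of $5 = 2\cdot 3\cdot 5/6$.
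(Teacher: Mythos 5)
Your proposal is correct and is essentially the paper's own proof: the paper likewise just invokes the run-time formulas derived from Proposition~\ref{prop:statistics} (total firings $=\tfrac{1}{2}(\varphi^2_{\infty}(\widetilde{c})-\varphi^2_{\infty}(c))$, local counts via $\varphi_\ell$) together with the explicit description of $\widetilde{n\delta_0}$ in Lemma~\ref{lem:deltastab}, leaving the arithmetic to the reader, and your summing of local counts versus the paper's direct use of $\varphi^2_{\infty}$ is an immaterial variation. One bookkeeping point: since firing vertex $j$ decrements $\varphi_\ell$ exactly when $j=\ell+1$, the number of times vertex $j$ fires is $\varphi_{j-1}(c)-\varphi_{j-1}(\widetilde{c})$, so the index $\varphi_{j+1}$ you quote (an off-by-two slip present in the paper itself) would, if used literally, produce the count for vertex $j+2$; your planned $n=4$ cross-check would catch and fix this.
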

\begin{proof}
As just mentioned, it follows from Proposition~\ref{prop:statistics} that the total number of vertex firings in a stabilization sequence $c \to \widetilde{c}$ is $\frac{1}{2}(\varphi^{2}_{\infty}(\widetilde{c}) - \varphi^{2}_{\infty}(c))$, and the number of times that~$j \in \mathbb{Z}$ fires is $\varphi_{j+1}(c)-\varphi_{j+1}(\widetilde{c})$. Thus the proposition follows from the description of $\widetilde{n\delta_0}$ in Lemma~\ref{lem:deltastab}.
\end{proof}

Now let us describe labeled chip-firing. A \emph{labeled configuration of chips} on~$\mathbb{Z}$ is some assignment of a finite number of distinguishable chips, labeled by positive integers, to the vertices of~$\mathbb{Z}$. We use uppercase calligraphic script for labeled configurations and use $(i)$ to denote the chip labeled $i$. Formally, we treat a labeled configuration~$\mathcal{C}$ as a function~$\mathcal{C}\colon X \to \mathbb{Z}$ for some $X \subseteq \mathbb{Z}_{>0}$, and we think of chip~$(i)$ as being at the vertex~$\mathcal{C}(i)$ in $\mathcal{C}$ for all~$i \in X$. Normally we will take $X = [n]$ and thus study labeled configurations of the $n$ chips~$(1),(2),\ldots,(n)$.  If $a < b$ and chips $(a)$ and $(b)$ are at the same vertex in $\mathcal{C}$, we may \emph{fire}~$(a)$ and~$(b)$ together in $\mathcal{C}$ by moving~$(a)$ leftward one vertex and~$(b)$ rightward one vertex. (The important point is that {\bf chips with lesser labels move leftward}.)  We write~$\mathcal{C} \to \mathcal{D}$ to mean that $\mathcal{D}$ is obtained from~$\mathcal{C}$ by a sequence of labeled chip-firing moves of this form. If $\mathcal{C}$ is a labeled configuration we use~$[\mathcal{C}]$ to denote the underlying unlabeled configuration: thus $[\mathcal{C}](i) \coloneqq \#\mathcal{C}^{-1}(i)$ for all~$i \in \mathbb{Z}$.  We say that~$\mathcal{D}$ is \emph{stable} if~$[\mathcal{D}]$ is stable. As mentioned, our strategy in understanding labeled chip-firing will be to relate it to unlabeled chip-firing. To that end, here are some very basic facts relating labeled and unlabeled chip-firing, which we will use without even citing specifically from now on.

\begin{prop}~\,
\begin{itemize}
\item If $\mathcal{C} \to \mathcal{D}$ then $[\mathcal{C}] \to [\mathcal{D}]$.
\item If $c \to d$ and $c = [\mathcal{C}]$, then there exists $\mathcal{D}$ with $\mathcal{C} \to \mathcal{D}$ such that $d = [\mathcal{D}]$.
\end{itemize}
Consequently, for any $\mathcal{C}$ there is some stable $\mathcal{D}$ with $\mathcal{C} \to \mathcal{D}$, and we have $[\mathcal{D}] = \widetilde{[\mathcal{C}]}$.
\end{prop}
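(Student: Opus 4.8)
The plan is to establish the two bulleted claims separately and then deduce the consequence. The first bullet is essentially immediate: a labeled firing of chips $(a)$ and $(b)$ at a common vertex $i$ moves one chip from $i$ to $i-1$ and one chip from $i$ to $i+1$, which is exactly the local effect of an unlabeled firing at $i$ on the underlying multiset of positions. So a single labeled move $\mathcal{C} \to \mathcal{D}$ projects to a single unlabeled move $[\mathcal{C}] \to [\mathcal{D}]$, and chaining these over a sequence gives $[\mathcal{C}] \to [\mathcal{D}]$ in general. I would state this in one sentence, noting that it suffices to check it for a single move and then pass to the reflexive transitive closure.

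**The second bullet is the lifting claim**, and it is the crux. Given $c \to d$ with $c = [\mathcal{C}]$, I want to produce a labeled sequence $\mathcal{C} \to \mathcal{D}$ realizing the same unlabeled walk with $[\mathcal{D}] = d$. I would proceed by induction on the number of unlabeled firing moves in the chosen sequence $c \to d$, the base case (zero moves) being trivial with $\mathcal{D} \coloneqq \mathcal{C}$. For the inductive step it suffices to handle a single unlabeled move: suppose $c \xrightarrow{F} e$ by firing at vertex $i$. Since $c = [\mathcal{C}]$, vertex $i$ carries at least two chips in $\mathcal{C}$, so I can choose any two chips there, say the one with the least label $(a)$ and the one with the greatest label $(b)$ among chips at $i$, and fire them—moving $(a)$ left and $(b)$ right. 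This is a legal labeled move (we always have $a < b$ since the two labels are distinct), and its underlying unlabeled configuration is exactly $e$. The only subtlety worth flagging is that a labeled firing \emph{requires} choosing an ordered pair with the smaller label going left; but because the two chosen chips necessarily have distinct labels, there is always a valid assignment of which moves left and which moves right, so the unlabeled move can always be lifted. Then I apply the inductive hypothesis to the shorter sequence $e \to d$ starting from the labeled configuration just produced.

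**The consequence then follows by combining both bullets with Lemma~\ref{lem:stab}.** Starting from any labeled $\mathcal{C}$, set $c \coloneqq [\mathcal{C}]$ and let $d \coloneqq \widetilde{c}$, so $c \to d$ by Lemma~\ref{lem:stab}. The second bullet produces $\mathcal{D}$ with $\mathcal{C} \to \mathcal{D}$ and $[\mathcal{D}] = d = \widetilde{c}$. Since $\widetilde{c}$ is stable by construction, $[\mathcal{D}]$ is stable, and hence $\mathcal{D}$ is stable by the definition of stability for labeled configurations given above. This exhibits the desired stable $\mathcal{D}$ and establishes $[\mathcal{D}] = \widetilde{[\mathcal{C}]}$.

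**The main obstacle**, such as it is, lies entirely in the second bullet—specifically in being careful that every unlabeled firing can be lifted to a \emph{legal} labeled firing. Since the paper's labeled rule is not symmetric (lesser label goes left), one must observe that distinctness of labels guarantees a well-defined legal choice at each step; no configuration of two or more chips at a vertex can block a labeled firing. Everything else is bookkeeping over sequences of moves, so I would keep the writeup to a few lines and emphasize this one point.
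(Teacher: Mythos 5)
Your proof is correct; the paper states this proposition without any proof (it is introduced as a list of ``very basic facts''), and your argument---project single moves for the first bullet, lift single moves by induction for the second, then combine with Lemma~\ref{lem:stab}---is precisely the routine verification the paper omits, including the one point of substance (distinctness of labels guarantees every unlabeled firing lifts to a legal labeled firing). One small refinement: as the paper later uses the consequence (e.g., for an \emph{arbitrary} stable $\mathcal{D}$ with $\Delta^n \to \mathcal{D}$ in Theorem~\ref{thm:main}), the equality $[\mathcal{D}] = \widetilde{[\mathcal{C}]}$ should hold for every stable $\mathcal{D}$ reachable from $\mathcal{C}$, not only the one you construct; this follows in one line from your first bullet plus the uniqueness in Lemma~\ref{lem:stab}: if $\mathcal{C} \to \mathcal{D}$ with $\mathcal{D}$ stable, then $[\mathcal{C}] \to [\mathcal{D}]$ with $[\mathcal{D}]$ stable, forcing $[\mathcal{D}] = \widetilde{[\mathcal{C}]}$.
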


There need not be a unique stable $\mathcal{D}$ with $\mathcal{C} \to \mathcal{D}$: the previous proposition only determines~$[\mathcal{D}]$ but not the way that the chips are labeled in~$\mathcal{D}$. Nevertheless we are interested in cases where we do have a unique labeled stabilization. In particular, we will consider the labeled analog of $n\delta_0$, which has chips~$(1),(2),\ldots,(n)$ at vertex $0$ and no other chips; we denote this configuration by $\Delta^n$. In other words, $\Delta^n(i) \coloneqq 0$ for all~$i \in [n]$. Of course,~$[\Delta^n] = n\delta_0$. Note, as mentioned in Section~\ref{sec:intro}, that $\Delta^3$ already does not have a unique stabilization. On the other hand, our main result is that when~$n$ is even, $\Delta^n$ does have a unique stabilization.

First let us observe that there is a useful global symmetry in this labeled chip-firing process when we start from the configuration~$\Delta^n$. If $\mathcal{C}$ is a configuration of $n$ labeled chips, define its \emph{dual}~$\mathcal{C}^{*}$ as follows: first reflect $\mathcal{C}$ horizontally about the origin, then replace chip $(i)$ by chip $(n+1-i)$ for all $1 \leq i \leq n$. Of course $(\mathcal{C}^{*})^{*} = \mathcal{C}$.

\begin{lemma} \label{lem:symmetry}
We have $\Delta^n \to \mathcal{C}$ if and only if $\Delta^n \to \mathcal{C}^{*}$.
\end{lemma}
\begin{proof}
It is easy to see that the duality operation respects labeled chip-firing moves, meaning that if $\mathcal{D}$ is obtained from $\mathcal{C}$ by a labeled chip-firing move then $\mathcal{D}^{*}$ is obtained from $\mathcal{C}^{*}$ by a labeled chip-firing move. The lemma then follows since~$(\Delta^n)^* = \Delta^n$.
\end{proof}

Very roughly speaking, to prove confluence of the labeled chip-firing process we study how far we can move chips via chip-firing. The following is obvious but important. 

\begin{prop} \label{prop:stabmaxineq}
If $c \to d$ then $\mathrm{min}(\widetilde{c}) \leq \mathrm{min}(d)$.
\end{prop}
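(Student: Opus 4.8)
The plan is to reduce the statement to a single monotonicity observation about how firing affects the leftmost occupied vertex, combined with the remark following Lemma~\ref{lem:stab} that $c \to d$ forces $\widetilde{d} = \widetilde{c}$. The point is that $\min$ of the support can never increase under firing, so that once we know $d$ itself fires down to $\widetilde{c}$, its leftmost chip cannot lie strictly to the left of $\min(\widetilde{c})$.

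First I would record the one-step claim: if $c'$ is obtained from $c$ by a single firing at vertex $j$, then $\min(c') \leq \min(c)$. Firing requires at least two chips at $j$, so in particular $j \geq \min(c)$. If $j > \min(c)$, the chips at the leftmost occupied vertex $\min(c)$ are untouched and no chip is ever moved to the left of $\min(c)$, so $\min(c') = \min(c)$. If $j = \min(c)$, the firing sends a chip to $j-1 = \min(c)-1$, so $\min(c') = \min(c) - 1$. In either case $\min(c') \leq \min(c)$, and iterating over a firing sequence gives $\min(d) \leq \min(c)$ whenever $c \to d$.

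With this in hand the proposition is immediate. Given $c \to d$, the uniqueness of stabilization (Lemma~\ref{lem:stab}) yields $\widetilde{d} = \widetilde{c}$, and hence $d \to \widetilde{c}$. Applying the monotonicity fact to the firing sequence $d \to \widetilde{c}$ gives $\min(\widetilde{c}) \leq \min(d)$, which is exactly the assertion.

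There is no genuine obstacle here; the only content is the one-step monotonicity, which is a routine case check. If one prefers to bypass the case analysis entirely, one can argue via the statistic $\varphi_{\ell}$ of Proposition~\ref{prop:statistics} instead: since $\varphi_{\ell}(c) = 0$ precisely when $\min(c) > \ell$ and $\varphi_{\ell}$ is nonincreasing along firings, setting $\ell \coloneqq \min(d)$ and using $d \to \widetilde{c}$ forces $\varphi_{\ell}(\widetilde{c}) \leq \varphi_{\ell}(d)$, from which $\min(\widetilde{c}) \leq \min(d)$ follows by a short contradiction argument. Either route completes the proof.
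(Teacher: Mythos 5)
Your proof is correct and follows essentially the same route as the paper's: the one-step observation that a firing can only preserve or decrease the minimum occupied vertex, applied to the stabilization sequence $d \to \widetilde{d} = \widetilde{c}$ furnished by Lemma~\ref{lem:stab}. The explicit case check and the alternative $\varphi_{\ell}$ argument are both fine but add nothing beyond what the paper's two-line proof already contains.
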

\begin{proof}
Each chip-firing move preserves or decreases the minimum occupied vertex, so we have $\mathrm{min}(d') \leq \mathrm{min}(d)$ for any $d \to d'$. Thus in particular we have $\mathrm{min}(\widetilde{d}) \leq \mathrm{min}(d)$. But if $c \to d$, then $\widetilde{d} = \widetilde{c}$.
\end{proof}

Applying Proposition~\ref{prop:stabmaxineq} to our situation of interest tells us that if~$\Delta^n \to \mathcal{C}$ then we have~$\mathrm{min}([\mathcal{C}]) \geq  -\lfloor n/2\rfloor$ and, by Lemma~\ref{lem:symmetry}, $\mathrm{max}([\mathcal{C}]) \leq \lfloor n/2\rfloor$. This puts some constraint on the movement of chips during the labeled chip-firing process, but is not really so useful because it says nothing about the position of chips with particular labels. We want to strengthen this conclusion about how far chips can move to take into account chip labels. 

Let us establish some notation for restricting labeled configurations to a subset of chips. For a labeled configuration $\mathcal{C}$ with label set~$X$ and $Y \subseteq \mathbb{Z}_{>0}$, we use~$\mathcal{C} \setminus Y$ to denote the restriction of $\mathcal{C}$ to the chips with labels in~$X \setminus Y$. For any labeled configuration~$\mathcal{C}$ and any~$k \in \mathbb{N}$, we use the shorthand $\mathcal{C}|_{\geq k} \coloneqq \mathcal{C} \setminus [k-1]$. We want some way to describe how the largest-labeled chips evolve in the labeled chip-firing process. So let us say that an unlabeled configuration $d$ is \emph{rightward-reachable} from an unlabeled configuration~$c$, written~$c \xrightarrow{R} d$, if $d$ is obtained from~$c$ by a sequence of (zero or more) moves of the forms:
\begin{itemize}
\item perform a chip-firing move;
\item move one chip rightward one vertex.
\end{itemize}
This notion precisely captures the way the largest-labeled chips evolve under labeled chip-firing.  Namely, we have the following.

\begin{prop} \label{prop:labeledrightreach}
If $\mathcal{C} \to \mathcal{D}$ then $[\mathcal{C}|_{\geq k}] \xrightarrow{R} [\mathcal{D}|_{\geq k}]$.
\end{prop}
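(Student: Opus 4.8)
The plan is to reduce immediately to the case of a single labeled chip-firing move and then split into cases according to the labels of the two chips being fired. Since $\xrightarrow{R}$ is defined as the relation generated by sequences of zero or more of its two elementary moves, it is reflexive and transitive, and $\mathcal{C} \to \mathcal{D}$ holds precisely when $\mathcal{D}$ is obtained from $\mathcal{C}$ by a finite sequence of single labeled firings. Thus it suffices to prove the statement when $\mathcal{D}$ is obtained from $\mathcal{C}$ by firing a single pair of chips $(a)$ and $(b)$ with $a < b$ together at some vertex $i$, moving $(a)$ to $i-1$ and $(b)$ to $i+1$; the general case then follows by composing the resulting $\xrightarrow{R}$-steps.

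For the single-move case, the key observation is that the chip moving leftward is always the lesser-labeled one, namely $(a)$. Since $a < b$, exactly one of the following three situations occurs. If $k \leq a$, then both fired chips survive in the restriction, both occupy vertex $i$ in $\mathcal{C}|_{\geq k}$ (so $[\mathcal{C}|_{\geq k}](i) \geq 2$), and firing them sends one chip leftward and one chip rightward; this is exactly a chip-firing move of the unlabeled configuration $[\mathcal{C}|_{\geq k}]$, and it produces $[\mathcal{D}|_{\geq k}]$. If $a < k \leq b$, then $(a)$ is discarded by the restriction while $(b)$ survives, so the only change to the restricted configuration is that the single chip $(b)$ moves from $i$ to $i+1$; this is precisely the ``move one chip rightward'' move, giving $[\mathcal{C}|_{\geq k}] \xrightarrow{R} [\mathcal{D}|_{\geq k}]$. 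Finally, if $b < k$, then neither chip survives, so $[\mathcal{D}|_{\geq k}] = [\mathcal{C}|_{\geq k}]$ and we use the empty sequence of moves.

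These three cases are exhaustive because $a < b$ forces $b \geq k$ whenever $a \geq k$; in particular there is no case in which the leftward-moving chip survives the restriction while its rightward-moving partner does not. This is really the crux of the statement: a chip with label $\geq k$ can move leftward only when its firing partner also has label $\geq k$, in which case the motion is a genuine chip-firing move of the restricted configuration, while in every other scenario the surviving large-labeled chips either move rightward or stay put, which is exactly what $\xrightarrow{R}$ permits. I do not anticipate a serious obstacle beyond correctly tracking which chips fall inside the restriction in each case; the content lies entirely in noticing that the asymmetry of the labeled firing rule (lesser label moves left) is what makes the restricted dynamics fit the one-sided relation $\xrightarrow{R}$.
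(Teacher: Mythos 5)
Your proof is correct and takes essentially the same route as the paper's: both reduce to a single labeled firing of chips $(a)$ and $(b)$ with $a<b$ and split into the same three cases ($a,b<k$: no effect; $k\leq a,b$: an unlabeled firing; $a<k\leq b$: a rightward move), composing over the sequence of firings. Your explicit remark that $a<b$ rules out the problematic fourth case (leftward-moving chip surviving the restriction while its partner does not) is left implicit in the paper but is the same underlying observation.
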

\begin{proof}
Suppose we fire two chips~$(a)$ and~$(b)$ in $\mathcal{C}$: if $a,b < k$, that firing does not affect~$[\mathcal{C}|_{\geq k}]$; if $k \leq a,b$, that firing  corresponds to a firing in $[\mathcal{C}|_{\geq k}]$; and if $a < k \leq b$, then that firing corresponds to moving a chip rightward in  $[\mathcal{C}|_{\geq k}]$.
\end{proof}

We want a strengthening of Proposition~\ref{prop:stabmaxineq} that applies to rightward-reachability. To that end, we define a partial order on unlabeled configurations of $n$ chips that can informally be thought of as ``$c \leq d$ means $d$ is obtained from $c$ by moving chips rightward''; it is defined formally as follows. If $c$ and $d$ are configurations of~$n$ unlabeled chips on~$\mathbb{Z}$, we write $c \leq d$ if and only if~$\sum_{i \leq j}c(i) \leq \sum_{i \leq j}d(i)$ for all~$j \in \mathbb{Z}$. Observe that $c \leq d$ implies that $\mathrm{max}(c) \leq \mathrm{max}(d)$ and $\mathrm{min}(c) \leq \mathrm{min}(d)$. We write $c \lessdot d$ to mean that $d$ covers $c$ according to this partial order~$\leq$. In other words, $c \lessdot d$ means that $d$ is obtained from $c$ by moving one chip rightward one vertex. 

An important property of this partial order is that it is preserved under stabilization, as we establish right now. In fact, something even stronger is true: stabilization preserves the cover relations of this partial order. (Note that $\varphi_{\infty}$ is a rank function for~$\leq$, where $\varphi_{\infty}(c) \coloneqq \sum_{i \in \mathbb{Z}} i \cdot c(i)$ is the statistic defined earlier in this section. By Proposition~\ref{prop:statistics}, chip-firing moves preserve~$\varphi_{\infty}$. So in fact stabilization being order-preserving is easily seen to be equivalent to it preserving cover relations.)

\begin{lemma} \label{lem:stabcover}
If $c \lessdot d$ then $\widetilde{c} \lessdot \widetilde{d}$.
\end{lemma}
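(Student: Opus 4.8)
The plan is to induct on the number of chip-firing moves needed to stabilize the lower configuration. Since $c \lessdot d$, by definition $d = c - \delta_k + \delta_{k+1}$ for some $k \in \mathbb{Z}$ with $c(k) \geq 1$, and I must show that $\widetilde{d}$ is obtained from $\widetilde{c}$ by moving one chip rightward one vertex. Set $N(c) \coloneqq \frac{1}{2}(\varphi^2_{\infty}(\widetilde{c}) - \varphi^2_{\infty}(c))$, the common length of every stabilization sequence $c \to \widetilde{c}$ (an integer, by Proposition~\ref{prop:statistics}), and induct on $N(c)$. The engine of the induction is this: whenever $c$ is unstable I fire it once, $c \xrightarrow{F} c_1$, and produce $d_1$ with $d \to d_1$ and $c_1 \lessdot d_1$. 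Since then $\widetilde{c_1} = \widetilde{c}$, $\widetilde{d_1} = \widetilde{d}$, and $N(c_1) = N(c) - 1$ (because $\varphi^2_{\infty}(c_1) = \varphi^2_{\infty}(c) + 2$), the inductive hypothesis applied to $c_1 \lessdot d_1$ yields $\widetilde{c} = \widetilde{c_1} \lessdot \widetilde{d_1} = \widetilde{d}$.

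For the base case $N(c) = 0$, the configuration $c$ is stable, so every vertex holds at most one chip and in particular $c(k) = 1$. If $c(k+1) = 0$ then $d$ is already stable and $\widetilde{c} = c \lessdot d = \widetilde{d}$. Otherwise the single instability at $k+1$ triggers a forced cascade: firing the unique unstable vertex at each step carries the configuration through $c - \delta_{k+t} + \delta_{k+t+1}$ for $t = 0, 1, 2, \ldots$, until it reaches the first gap to the right, namely the least $t \geq 0$ with $c(k+t+1) = 0$. The resulting stable configuration is $\widetilde{d} = c - \delta_{k+t} + \delta_{k+t+1}$, which is exactly $\widetilde{c} = c$ with one chip moved rightward one vertex, so $\widetilde{c} \lessdot \widetilde{d}$.

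For the inductive step, pick any fireable vertex $j$ of $c$ and split into cases according to the position of $j$ relative to $\{k, k+1\}$. If $j \notin \{k, k+1\}$, or $j = k+1$, or $j = k$ with $c(k) \geq 3$, then $d(j) \geq 2$ as well, so I fire $j$ in both configurations; the identical move cancels in the difference, giving $d_1 - c_1 = d - c = \delta_{k+1} - \delta_k$, and one checks $c_1(k) \geq 1$ in each of these cases, so $c_1 \lessdot d_1$ as required. The delicate case is $j = k$ with $c(k) = 2$: here $d(k) = 1$ is not fireable, so I cannot mirror the move. Instead I fire only $c$, obtaining $c_1 = c - 2\delta_k + \delta_{k-1} + \delta_{k+1}$, and observe that $d = c_1 - \delta_{k-1} + \delta_k$ with $c_1(k-1) = c(k-1) + 1 \geq 1$; that is, $d$ itself is $c_1$ with a chip moved rightward one vertex, so $c_1 \lessdot d$ and I take $d_1 \coloneqq d$. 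In every case the inductive hypothesis then applies.

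The main obstacle is precisely this last case: when the fired vertex is the ``source'' $k$ of the displacement and $c$ has exactly two chips there, the natural coupling breaks because $d$ cannot match the firing, and the key observation is that the cover relation simply shifts one vertex to the left (from the pair $(k, k+1)$ to $(k-1, k)$) while the pair of stabilizations is unaffected. I would also verify the routine bookkeeping in the other cases, namely that firing $j$ preserves $c_1(k) \geq 1$ so that $c_1 \lessdot d_1$ is a genuine cover, and that $d \to d_1$ is a valid sequence of firings (possibly empty). Finally, the remark preceding the lemma provides a consistency check: since $\varphi_{\infty}$ is a rank function preserved by firing, $\widetilde{c}$ and $\widetilde{d}$ automatically have $\varphi_{\infty}$-values differing by one, so mere comparability would already force a cover—matching the conclusion produced directly by the induction above.
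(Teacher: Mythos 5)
Your proof is correct, but it takes a genuinely different route from the paper's. The paper decomposes $c = c' + \delta_i$, $d = c' + \delta_{i+1}$, stabilizes $c'$ first (invoking the abelian property to write $\widetilde{c} = \widetilde{\widetilde{c'}+\delta_i}$), locates the empty sites $a,b$ of $\widetilde{c'}$ flanking $i$, and then reads the conclusion directly off the explicit interval-stabilization formula of Proposition~\ref{prop:linestab} applied to $\widetilde{c'}+\delta_i$ and $\widetilde{c'}+\delta_{i+1}$. You instead run a coupling induction on the number $N(c)$ of firings needed to stabilize $c$ (well defined via $\varphi^2_{\infty}$ from Proposition~\ref{prop:statistics}): mirror each firing of $c$ in $d$ whenever possible, and in the single case where mirroring breaks down ($j=k$ with $c(k)=2$, so $d(k)=1$ is not fireable) observe that the cover relation simply slides one step left, giving $c_1 \lessdot d$ with $N(c_1)=N(c)-1$; your base case, the rightward cascade from a stable configuration with one displaced chip, is in essence a hand-proved special case of Proposition~\ref{prop:linestab}. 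I checked the case analysis ($j \notin \{k,k+1\}$; $j=k+1$; $j=k$ with $c(k)\geq 3$; $j=k$ with $c(k)=2$) and the bookkeeping $c_1(k)\geq 1$ in each branch; it all holds. What the paper's route buys: brevity once Proposition~\ref{prop:linestab} is available, and a computation so localized that it transfers, with only the definitions of $a$ and $b$ modified, to the loop-augmented graphs of Theorem~\ref{thm:loops}. What your route buys: self-containment — you need only confluence (Lemma~\ref{lem:stab}) and the potential function, never an explicit formula for any stabilization — and a coupling technique that is arguably more portable to settings where no closed form is available. Both arguments ultimately rest on the same foundation, namely the uniqueness of stabilization.
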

\begin{proof}
That $c \lessdot d$ means there is some $c'$ and $i \in \mathbb{Z}$ such that $c = c' + \delta_i$ and $d = c' + \delta_{i+1}$. Define~$a \coloneqq \mathrm{max}\{j \leq i\colon j \notin \mathrm{supp}(\widetilde{c'})\}$ and $b \coloneqq \mathrm{min}\{j \geq i+1\colon j \notin \mathrm{supp}(\widetilde{c'})\}$. Thus there exists a configuration $c''$ such that $ \widetilde{c'} = c'' + \delta_{[a+1,b-1]}$ and $\mathrm{supp}(c'') \cap [a,b] = \varnothing$. Proposition~\ref{prop:linestab} then implies
\begin{align*}
\widetilde{\widetilde{c'} + \delta_i} &= c'' +\delta_{[a,a+b-i-1]} + \delta_{[a+b-i+1,b]}, \\
\widetilde{\widetilde{c'} \hspace{-0.1cm} + \hspace{-0.1cm} \delta_{i+1}} &= c'' +\delta_{[a,a+b-i-2]} + \delta_{[a+b-i,b]}. 
\end{align*}
In particular,
$\widetilde{\widetilde{c'} + \delta_i} \lessdot\widetilde{\widetilde{c'} \hspace{-0.1cm} + \hspace{-0.1cm} \delta_{i+1}} $. But $c = \widetilde{\widetilde{c'} + \delta_i}$ and $d=\widetilde{\widetilde{c'} \hspace{-0.1cm} + \hspace{-0.1cm} \delta_{i+1}}$, so the claim is proved.
\end{proof}

Lemma~\ref{lem:stabcover} is the key lemma which allows us to establish confluence of labeled chip-firing. It is also interesting in its own right as a result purely concerning unlabeled chip-firing. We now apply Lemma~\ref{lem:stabcover} to give a strengthening of Proposition~\ref{prop:stabmaxineq} which applies to rightward-reachability.

\begin{cor} \label{cor:rightreach}
If $c \xrightarrow{R} d$ then $\widetilde{c} \leq \widetilde{d}$ and consequently $\mathrm{min}(\widetilde{c}) \leq \mathrm{min}(d)$.
\end{cor}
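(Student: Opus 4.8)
The plan is to reduce the statement to a single rightward-reachability move and then invoke Lemma~\ref{lem:stabcover}. By definition $c \xrightarrow{R} d$ means there is a chain $c = c_0, c_1, \ldots, c_k = d$ in which each $c_t$ is obtained from $c_{t-1}$ either by a chip-firing move or by moving a single chip one vertex to the right. Since $\leq$ is transitive, it suffices to prove $\widetilde{c_{t-1}} \leq \widetilde{c_t}$ for each single step; the conclusion $\widetilde{c} \leq \widetilde{d}$ then follows by induction on $k$.

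For a single step I would distinguish the two kinds of move. If $c_t$ is obtained from $c_{t-1}$ by a chip-firing move, then $c_{t-1} \to c_t$, so by the invariance of stabilization under firing we have $\widetilde{c_{t-1}} = \widetilde{c_t}$, and in particular $\widetilde{c_{t-1}} \leq \widetilde{c_t}$. If instead $c_t$ is obtained by moving one chip rightward one vertex, then $c_{t-1} \lessdot c_t$ is exactly a cover relation of~$\leq$, so Lemma~\ref{lem:stabcover} gives $\widetilde{c_{t-1}} \lessdot \widetilde{c_t}$, whence again $\widetilde{c_{t-1}} \leq \widetilde{c_t}$. This establishes the first assertion $\widetilde{c} \leq \widetilde{d}$.

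For the second assertion I would combine $\widetilde{c} \leq \widetilde{d}$ with two elementary facts already recorded in the excerpt. First, the observation that $x \leq y$ implies $\mathrm{min}(x) \leq \mathrm{min}(y)$, applied to $x = \widetilde{c}$ and $y = \widetilde{d}$, gives $\mathrm{min}(\widetilde{c}) \leq \mathrm{min}(\widetilde{d})$. Second, since $d \to \widetilde{d}$, Proposition~\ref{prop:stabmaxineq} (stabilization only lowers the minimum occupied vertex) gives $\mathrm{min}(\widetilde{d}) \leq \mathrm{min}(d)$. Chaining these two inequalities yields $\mathrm{min}(\widetilde{c}) \leq \mathrm{min}(d)$, as desired.

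There is no real obstacle here: all the substance lives in Lemma~\ref{lem:stabcover}, and the corollary is essentially the bookkeeping exercise of lifting that cover-preservation statement along a chain of moves. The only point requiring a moment's care is recognizing that the two allowed moves in rightward-reachability behave qualitatively differently under stabilization — firing moves leave the stabilization fixed, whereas a rightward shift advances it by exactly one cover step — but both are compatible with the weak inequality $\leq$ that we need.
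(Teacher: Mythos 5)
Your proof is correct and follows essentially the same route as the paper: both decompose the rightward-reachability sequence into individual moves, use the invariance of stabilization under firing ($\widetilde{c_{t-1}} = \widetilde{c_t}$) for chip-firing steps and Lemma~\ref{lem:stabcover} for rightward steps, and chain the resulting inequalities (the paper phrases this as an induction over blocks of firings interleaved with cover moves, which is the same argument). Your derivation of $\mathrm{min}(\widetilde{c}) \leq \mathrm{min}(d)$ via monotonicity of $\mathrm{min}$ under $\leq$ together with Proposition~\ref{prop:stabmaxineq} is also exactly the paper's.
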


\begin{proof}
Suppose $c \xrightarrow{R} d$. Thus there is some sequence~$c_0,c'_0,c_1,c'_1,\ldots,c_{\ell},c'_{\ell}$ of configurations with $c = c_0$ and $c'_{\ell} = d$ such that:
\begin{itemize}
\item $c_i \to c'_i$ for all $0 \leq i \leq \ell$;
\item $c'_{i-1} \lessdot c_i$ for all $1 \leq i \leq \ell$.
\end{itemize}
We claim that $ \widetilde{c} \leq \widetilde{c_i} = \widetilde{c'_i}$ for all $0 \leq i \leq \ell$. That $\widetilde{c_i} = \widetilde{c'_i}$ follows from $c_{i} \to c'_{i}$. So the crucial part of the claim is to show $\widetilde{c}\leq\widetilde{c_i}$. Clearly this holds for $i=0$ since by definition~$c_0 = c$. So assume $1 \leq i \leq \ell$ and $ \widetilde{c}\leq \widetilde{c_{i-1}}$. Because~$c'_{i-1} \lessdot c_i $, from Lemma~\ref{lem:stabcover} we get that~$ \widetilde{c_{i-1}} = \widetilde{c'_{i-1}} \lessdot \widetilde{c_i}$. Together with~$\widetilde{c} \leq \widetilde{c_{i-1}}$ this implies~$\widetilde{c} \leq \widetilde{c_i}$. So the claim is proved by induction.  Taking $i=\ell$ in the claim gives~$\widetilde{c} \leq \widetilde{c'_{\ell}} $, which is to say~$\widetilde{c} \leq \widetilde{d}$. This implies $\mathrm{min}(\widetilde{c}) \leq \mathrm{min}(\widetilde{d})$. But $\mathrm{min}(\widetilde{d}) \leq \mathrm{min}(d)$ by Proposition~\ref{prop:stabmaxineq}.
\end{proof}

Now we can apply Corollary~\ref{cor:rightreach} to restrict, based on their labels, how far chips can move in our situation of interest.

\begin{lemma} \label{lem:labeledchippos}
Suppose $\Delta^n \to \mathcal{C}$. Then~$-\lfloor (n+1-k)/2 \rfloor \leq \mathcal{C}(k) \leq \lfloor k/2 \rfloor$ for all~$1 \leq k \leq n$.
\end{lemma}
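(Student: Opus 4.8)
The plan is to bound $\mathcal{C}(k)$ from above by $\lfloor k/2 \rfloor$ and then obtain the lower bound by duality. Fix $k$ with $1 \le k \le n$. The key observation is that $\mathcal{C}(k)$ is the position of chip $(k)$ in $\mathcal{C}$, and chip $(k)$ is the \emph{smallest}-labeled chip in the restriction $\mathcal{C}|_{\ge k}$; hence its position is at least $\mathrm{min}([\mathcal{C}|_{\ge k}])$. So it suffices to show $\mathrm{min}([\mathcal{C}|_{\ge k}]) \le \lfloor k/2 \rfloor$; wait---that is the wrong direction for an upper bound. Let me reconsider: I actually want an upper bound on a single chip's position, so I should instead track the \emph{largest}-labeled chips and use $\mathrm{max}$. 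Concretely, chip $(k)$ is the largest-labeled chip among the chips with labels in $[k]$, so consider the complementary restriction obtained by discarding labels $> k$; but the clean tool we have, Corollary~\ref{cor:rightreach}, is phrased for rightward-reachability and $\mathrm{min}$, which pairs naturally with $\mathcal{C}|_{\ge k}$. The correct setup is therefore to prove the lower bound $-\lfloor (n+1-k)/2\rfloor \le \mathcal{C}(k)$ directly and deduce the upper bound by the symmetry of Lemma~\ref{lem:symmetry}.

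So first I would establish the lower bound. Since $\Delta^n \to \mathcal{C}$, Proposition~\ref{prop:labeledrightreach} gives $[\Delta^n|_{\ge k}] \xrightarrow{R} [\mathcal{C}|_{\ge k}]$. Here $\Delta^n|_{\ge k}$ consists of the $n+1-k$ chips $(k),(k+1),\dots,(n)$, all at the origin, so $[\Delta^n|_{\ge k}] = (n+1-k)\delta_0$. Applying Corollary~\ref{cor:rightreach} with $c = (n+1-k)\delta_0$ and $d = [\mathcal{C}|_{\ge k}]$ yields
\[
\mathrm{min}\bigl(\widetilde{(n+1-k)\delta_0}\bigr) \le \mathrm{min}\bigl([\mathcal{C}|_{\ge k}]\bigr).
\]
By Lemma~\ref{lem:deltastab}, the stabilization $\widetilde{(n+1-k)\delta_0}$ has minimum occupied vertex exactly $-\lfloor (n+1-k)/2\rfloor$ (this holds in both the even and odd cases of that lemma). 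Finally, chip $(k)$ has the smallest label in $\mathcal{C}|_{\ge k}$, and it occupies some vertex in the support of $[\mathcal{C}|_{\ge k}]$, so $\mathcal{C}(k) \ge \mathrm{min}([\mathcal{C}|_{\ge k}]) \ge -\lfloor (n+1-k)/2\rfloor$. This gives the lower bound.

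For the upper bound, I would apply Lemma~\ref{lem:symmetry}. Since $\Delta^n \to \mathcal{C}$, we also have $\Delta^n \to \mathcal{C}^*$, where $\mathcal{C}^*$ is obtained by reflecting about the origin and relabeling $(i) \mapsto (n+1-i)$. Under this duality, chip $(k)$ in $\mathcal{C}$ becomes chip $(n+1-k)$ in $\mathcal{C}^*$, positioned at $-\mathcal{C}(k)$; that is, $\mathcal{C}^*(n+1-k) = -\mathcal{C}(k)$. The lower bound already proved, applied to $\mathcal{C}^*$ at label $n+1-k$, gives $\mathcal{C}^*(n+1-k) \ge -\lfloor (n+1-(n+1-k))/2\rfloor = -\lfloor k/2\rfloor$. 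Substituting $\mathcal{C}^*(n+1-k) = -\mathcal{C}(k)$ and negating yields $\mathcal{C}(k) \le \lfloor k/2\rfloor$, completing the proof.

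The main subtlety I anticipate is getting the direction of the inequalities and the bookkeeping of which restriction ($\mathcal{C}|_{\ge k}$ versus its complement) to track straight: the asymmetry between $\mathrm{min}$ and $\mathrm{max}$ in the available lemmas forces me to prove only one of the two bounds directly and to extract the other from the duality $\mathcal{C} \mapsto \mathcal{C}^*$. The one genuinely content-bearing step is the reduction of the chip's position to $\mathrm{min}([\mathcal{C}|_{\ge k}])$ via the fact that $(k)$ is the least-labeled surviving chip; everything else is an application of Corollary~\ref{cor:rightreach}, the explicit formula of Lemma~\ref{lem:deltastab}, and the reflection symmetry.
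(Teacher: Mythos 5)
Your proof is correct and takes essentially the same route as the paper's: the lower bound via Proposition~\ref{prop:labeledrightreach}, Corollary~\ref{cor:rightreach}, and the value $\mathrm{min}(\widetilde{(n+1-k)\delta_0}) = -\lfloor(n+1-k)/2\rfloor$ from Lemma~\ref{lem:deltastab}, followed by the upper bound via the duality of Lemma~\ref{lem:symmetry}. The only differences are cosmetic: you spell out the duality bookkeeping $\mathcal{C}^*(n+1-k) = -\mathcal{C}(k)$ that the paper leaves implicit, and your opening paragraph of false starts could simply be deleted.
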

\begin{proof}
First we show $-\lfloor (n+1-k)/2 \rfloor \leq \mathcal{C}(k)$. By Proposition~\ref{prop:labeledrightreach}, $[\Delta^n|_{\geq k}] \xrightarrow{R} [C|_{\geq k}]$. Thus by Lemma~\ref{cor:rightreach}, $\mathrm{min}(\widetilde{[\Delta^n|_{\geq k}]}) \leq \mathrm{min}([C|_{\geq k}])$. But $[\Delta^n|_{\geq k}] = (n+1-k)\delta_0$, and so Lemma~\ref{lem:deltastab} tells us that~$\mathrm{min}(\widetilde{[\Delta^n|_{\geq k}]}) = -\lfloor (n+1-k)/2\rfloor$. Thus indeed chip $(k)$ must be at or to the left of the vertex~$-\lfloor (n+1-k)/2\rfloor$. That $\mathcal{C}(k) \leq \lfloor k/2 \rfloor$ then follows via Lemma~\ref{lem:symmetry}.
\end{proof}

We are now ready to prove the main theorem, which says that when the number~$n$ of chips is even, the labeled chip-firing process on~$\mathbb{Z}$ necessarily sorts these chips. Recall that, according to Proposition~\ref{prop:runtime}, the number of firings in this process is $\Theta(n^3)$, so this procedure is not being offered as a practical way to sort.

\begin{thm} \label{thm:main}
Suppose $n \coloneqq 2m$ is even and $\Delta^n \to \mathcal{D}$ where $\mathcal{D}$ is stable. Then for all~$1 \leq k \leq m$ we have that $\mathcal{D}(k) = -(m+1)+k$ and $\mathcal{D}(m+k) = k$.
\end{thm}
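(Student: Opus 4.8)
The plan is to first reduce the theorem to a statement about which label sits at each occupied vertex, and then to pin those labels down. By Lemma~\ref{lem:deltastab} the underlying unlabeled stabilization is $\widetilde{n\delta_0}=\delta_{[-m,-1]}+\delta_{[1,m]}$, so any stable $\mathcal{D}$ with $\Delta^n\to\mathcal{D}$ occupies exactly the vertices $-m,\ldots,-1,1,\ldots,m$ with one chip each. Thus $\mathcal{D}$ is a bijection from the labels $[n]$ to this set of $2m$ positions, and the assertion to be proved is precisely that this bijection is increasing, i.e.\ that $\mathcal{D}(1)<\mathcal{D}(2)<\cdots<\mathcal{D}(n)$. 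Since the position set is already fixed, everything comes down to establishing this monotonicity in the labels.

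My first move is to feed the stable configuration into Lemma~\ref{lem:labeledchippos}, which gives $-\lfloor (n+1-k)/2\rfloor \le \mathcal{D}(k)\le \lfloor k/2\rfloor$ for every $k$. At the two extreme vertices this is already decisive: every $k\ge 2$ has lower bound strictly larger than $-m$, so only chip $(1)$ may sit at $-m$, and dually only chip $(n)$ may sit at $m$; hence $\mathcal{D}(1)=-m$ and $\mathcal{D}(n)=m$ are forced. I would then try to strip off these two chips and recurse on $m$, keeping Lemma~\ref{lem:symmetry} in play so that the set-up stays self-dual and the two ends are treated symmetrically.

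The serious difficulty --- and the reason Lemma~\ref{lem:labeledchippos} cannot simply be iterated --- is that its bounds are \emph{not} tight for the interior chips and do not by themselves separate them: already for $n=4$ the ``swapped'' assignment $\mathcal{D}(2)=1,\ \mathcal{D}(3)=-1$ satisfies every inequality of Lemma~\ref{lem:labeledchippos}, and in fact satisfies the full rightward-reachability constraints of Corollary~\ref{cor:rightreach} together with their duals, even though it is not reachable from $\Delta^4$. So the prefix/suffix partial-order invariants are genuinely too coarse, and the heart of the proof must use the dynamics more finely. Here I would try to exploit the monovariant $\Phi(\mathcal{C}):=\sum_{k} k\cdot\mathcal{C}(k)$, which strictly increases by $b-a>0$ at each firing of $(a),(b)$ and which, over all labelings of the fixed position set, is uniquely maximized by the sorted assignment; the sorted configuration is thereby singled out as the $\Phi$-maximizer, and the theorem would follow from a matching lower bound $\Phi(\mathcal{D})\ge \Phi_{\max}$, where $\Phi_{\max}$ denotes that maximal value. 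Establishing this lower bound is exactly where the coupling between the two escaping extreme chips (chip $(1)$ only ever moves left, chip $(n)$ only ever moves right) and the interior chips must be controlled: the clean inductive reduction to $\Delta^{2m-2}$ fails literally, because a firing of $(1)$ or $(n)$ against an interior chip displaces that interior chip, so a firing history for the inner $2m-2$ chips alone need not be legal. The main obstacle, which I expect to be the crux of the whole argument, is therefore to decouple these boundary firings --- commuting them to the boundary or otherwise absorbing them --- so that the interior chips inherit precisely the constraints of the smaller instance and the induction can close.
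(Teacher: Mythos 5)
Your set-up, your treatment of the extreme chips (that $\mathcal{D}(1)=-m$ and $\mathcal{D}(n)=m$ follow from Lemma~\ref{lem:labeledchippos} plus the fact that vertices $-m,m$ are occupied --- exactly the paper's base case, with the same crucial use of evenness), and your diagnosis that the reachability bounds applied to the final configuration cannot separate the interior chips are all correct. But the proposal stops precisely where the theorem begins. The lower bound $\Phi(\mathcal{D})\ge\Phi_{\max}$ is never established, and ``decouple the boundary firings'' is a stated hope, not an argument. Worse, this reformulation buys no leverage: since $\sum_{k}\mathcal{C}(k)=\varphi_{\infty}([\mathcal{C}])=0$ is conserved throughout the process, your functional satisfies $\Phi(\mathcal{C})=-\sum_{k=1}^{n}\psi_{k}(\mathcal{C})$, where the $\psi_k$ are the statistics of Corollary~\ref{cor:main}; and since the sorted assignment is the unique $\Phi$-maximizer over labelings of the fixed support, the inequality $\Phi(\mathcal{D})\ge\Phi_{\max}$ is \emph{equivalent} to Theorem~\ref{thm:main}. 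The paper remarks explicitly that it knows no way to prove such invariant statements except as a consequence of the theorem itself. So what you have is a restatement of the problem, together with a correct identification of why the obvious attacks fail --- a genuine gap, not a proof.

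The idea you are missing is that the paper inducts on the \emph{label} $k$ (never stripping off chips or shrinking $n$), and applies the order-theoretic machinery to the \emph{firing history} rather than to the terminal configuration. Assume inductively that $(j)$ ends at $-(m+1)+j$ for all $j<k$. The paper proves two facts that are jointly contradictory unless $\mathcal{D}(k)\le-(m+1)+k$. First (Lemma~\ref{lem:prooflem2}): chip $(k)$ must at some point fire together with chip $(k-1)$; otherwise $(k)$ and $(k-1)$ interact identically with every other chip, so exchanging their roles throughout the entire history produces a legal stabilization in which $(k-1)$ lands elsewhere, contradicting the inductive uniqueness of its destination. Second (Lemma~\ref{lem:prooflem1}): if $\mathcal{D}(k)>-(m+1)+k$, then $(k)$ never fired with any smaller-labeled chip. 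Here is where Corollary~\ref{cor:rightreach} and Lemma~\ref{lem:stabcover} do their real work: they are applied to the configurations $[\mathcal{C}\setminus\{k\}]$ and $[\mathcal{C}'\setminus\{k\}]$ obtained by \emph{deleting chip $(k)$} at the last such firing $\mathcal{C}'\to\mathcal{C}$, which forces $[-m,-(m+1)+k-1]\subseteq\mathrm{supp}\bigl(\widetilde{[\mathcal{C}'\setminus\{k\}]}\bigr)$; one then stabilizes all chips other than the partner $(j)$, and a $\varphi_{\ell}$-monotonicity argument shows vertex $-(m+1)+k$ can never fire again, trapping $(j)$ strictly to the right of $-(m+1)+k-1$ in some stable configuration --- contradicting the inductive hypothesis for $(j)$. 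Combining the two facts with the inductive hypothesis (the vertices to the left of $-(m+1)+k$ are already taken) pins $\mathcal{D}(k)=-(m+1)+k$ and closes the induction. This history-based analysis, which circumvents exactly the obstruction you exhibited with the swapped $n=4$ assignment, is the content of the proof, and your proposal does not approach it.
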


\begin{proof} 
Let $n =2m$ be even and let $\Delta^n \to \mathcal{D}$ with $\mathcal{D}$ stable. For all $1 \leq k \leq m$, the assertion that $\mathcal{D}(m+k) = k$ follows from $\mathcal{D}(m+1-k) = -k$ by Lemma~\ref{lem:symmetry}. Thus we prove only that~$\mathcal{D}(k) = -(m+1)+k$ for all $1 \leq k \leq m$.

The proof is by induction on $k$. So let us first address the base case $k=1$. Lemma~\ref{lem:labeledchippos} says that $\mathcal{D}(i) > -m$ for all $2 \leq i \leq n$. (Here we use crucially that $n=2m$ is even.) But on the other hand, we know thanks to Lemma~\ref{lem:deltastab} that vertex $-m$ is occupied in $\mathcal{D}$. So in fact it must be occupied by chip~(1).

Now assume $k \geq 2$ and the result holds for all smaller values of~$k$. We will use some internal lemmas in the proof (``internal'' because they assume the inductive hypothesis).

\begin{lemma} \label{lem:prooflem1}
If $\mathcal{D}(k) > -(m+1)+k$ then for all~$1 \leq j \leq k-1$, chip $(k)$ never fired together with chip~$(j)$ in the labeled chip-firing process $\Delta^n \to \mathcal{D}$.
\end{lemma}
\begin{proof}
Suppose that $\mathcal{D}(k) > -(m+1)+k$. And suppose to the contrary that chip~$(k)$ did fire together with chip~$(j)$ for some $1 \leq j \leq k-1$ at some point in the labeled chip-firing process $\Delta^n \to \mathcal{D}$. Let us concentrate on the last moment when this happened: let~$\mathcal{C}'$ be the step before chip~$(k)$ fired with some chip~$(j)$ with $1 \leq j \leq k-1$ for the last time (and thus define $j$ to be the label of this other chip). Let $\mathcal{C}$ be the result of firing~$(k)$ and~$(j)$ together in $\mathcal{C}'$. So $\Delta^n \to \mathcal{C}'$, $\mathcal{C}$ is obtained from $\mathcal{C}'$ by firing~$(k)$ and~$(j)$ together, and $\mathcal{D}$ is obtained from $\mathcal{C}$ by a sequence of firings that either do not involve~$(k)$, or fire~$(k)$ together with a chip with a greater label. It is clear from this description that~$[\mathcal{C} \setminus \{k\}] \xrightarrow{R} [\mathcal{D} \setminus \{k\}]$. Therefore, Corollary~\ref{cor:rightreach} tells us that~$\widetilde{[\mathcal{C} \setminus \{k\}]} \leq [\mathcal{D} \setminus \{k\}]$. As a consequence of the  assumption~$\mathcal{D}(k) > -(m+1)+k$, the assumption $k \leq m$, and Lemma~\ref{lem:deltastab}, we have that~$[-m,-(m+1)+k] \subseteq \mathrm{supp}(\widetilde{[\mathcal{D} \setminus \{k\}]})$. Thus, since $\mathrm{min}(\widetilde{[\mathcal{C} \setminus \{k\}]}) \geq \mathrm{min}(\widetilde{n\delta_0})= -m$ and $\widetilde{[\mathcal{C} \setminus \{k\}]}$ has at most one chip at each vertex, we have~$[-m,-(m+1)+k] \subseteq \mathrm{supp}(\widetilde{[\mathcal{C} \setminus \{k\}]})$. Next, note~$[\mathcal{C}\setminus \{k\}] \lessdot [\mathcal{C}' \setminus\{k\}]$. So by applying Lemma~\ref{lem:stabcover}, we conclude that $\widetilde{[\mathcal{C}\setminus \{k\}]} \lessdot \widetilde{[\mathcal{C}' \setminus\{k\}]}$, i.e., that $ \widetilde{[\mathcal{C}' \setminus\{k\}]}$ is obtained from  $\widetilde{[\mathcal{C}\setminus \{k\}]}$ by moving one chip rightward one vertex. In particular this means that we must have~$[-m,-(m+1)+k-1] \subseteq \mathrm{supp}(\widetilde{[\mathcal{C}' \setminus \{k\}]})$ (where again we use the fact that that~$\widetilde{[\mathcal{C}' \setminus \{k\}]}$ has at most one chip at each vertex). Now, chips~$(k)$ and~$(j)$ occupy the same vertex in~$\mathcal{C}'$, which means $[\mathcal{C}' \setminus \{k\}] = [\mathcal{C}' \setminus \{j\}]$. So by starting from $\mathcal{C}'$ and repeatedly firing all chips other than~$(j)$ until we stabilize these other chips, we can eventually reach some configuration~$\mathcal{D}'$ with $[\mathcal{D}' \setminus \{j\}] = \widetilde{[\mathcal{C}' \setminus \{j\}]} = \widetilde{[\mathcal{C}' \setminus \{k\}]}$. 

The upshot of the previous paragraph is that if the lemma is false then we can find a configuration~$\mathcal{D}'$ with~$\Delta^n \to \mathcal{D}'$ and~$[-m,-(m+1)+k-1] \subseteq \mathrm{supp}([\mathcal{D}' \setminus \{j\}])$ for some~$1 \leq j \leq k-1$. Let us show that this is impossible. For an unlabeled configuration~$c$ and $\ell \in \mathbb{Z}$, recall the statistic $\varphi_{\ell}(c)  \coloneqq \sum_{i \leq \ell} (i - \ell-1)\cdot c(i)$ defined at the beginning of this section. It follows from Proposition~\ref{prop:statistics} that $\varphi_{\ell}$ weakly decreases with each chip-firing move, and so we always have $\varphi_{\ell}(\widetilde{c}) \leq \varphi_{\ell}(c)$; moreover, it follows from Proposition~\ref{prop:statistics} that if~$\varphi_{\ell}(c) = \varphi_{\ell}(\widetilde{c})$ then vertex $\ell+1$ never fires during the stabilization process~$c \to \widetilde{c}$. Now, we claim that~$(j)$ is strictly to the right of vertex $-(m+1)+k-1$ in~$\mathcal{D}'$: indeed, otherwise~$\varphi_{-(m+1)+k-1}([\mathcal{D}']) < \varphi_{-(m+1)+k-1}(\widetilde{n\delta_0})$, and of course~$\widetilde{[\mathcal{D}']} = \widetilde{n\delta_0}$. If chip~$(j)$ is strictly to the right of vertex $-(m+1)+k-1$ in~$\mathcal{D}'$, as it must be, then~$\varphi_{-(m+1)+k-1}([\mathcal{D}']) = \varphi_{-(m+1)+k-1}(\widetilde{n\delta_0})$. So if we continue to stabilize, that is, if we let $\mathcal{D}''$ be such that $\mathcal{D}' \to \mathcal{D}''$ and $\mathcal{D}''$ is stable, then the vertex $-(m+1)+k$ never fires during the labeled chip-firing process~$\mathcal{D}' \to \mathcal{D}''$. Consequently, chip~$(j)$ always remains strictly to the right of~$-(m+1)+k-1$ during the process $\mathcal{D}' \to \mathcal{D}''$. So chip~$(j)$ is strictly to the right of~$-(m+1)+k-1$ in the stable configuration $\mathcal{D}''$. But this contradicts our inductive hypothesis since $1 \leq j \leq k-1$.
\end{proof}

\begin{lemma} \label{lem:prooflem2}
Chip $(k)$ must have fired together with chip~$(k-1)$ at some point in the labeled chip-firing process $\Delta^n \to \mathcal{D}$.
\end{lemma}
\begin{proof}
Note that in the labeled chip-firing process, chips~$(k)$ and~$(k-1)$ interact in the same way with all chips~$(j)$ for~$j \neq k,k-1$. So if chip~$(k)$ and chip~$(k-1)$ never fire together in the labeled chip-firing process $\Delta^n \to \mathcal{D}$, we can swap the roles of~$(k)$ and~$(k-1)$ to reach a stable configuration $\mathcal{D}'$ where $(k)$ and $(k-1)$ have swapped places. This contradicts our inductive hypothesis which says that there is only one vertex~$(k-1)$ could end up at in a stable configuration.
\end{proof}

Lemmas~\ref{lem:prooflem1} and~\ref{lem:prooflem2} together imply that $\mathcal{D}(k) \leq -(m+1)+k$. By our inductive hypothesis, we know that vertex $-(m+1)+j$ is occupied by $(j)$ for all $1 \leq j \leq k-1$. Thus~$\mathcal{D}(k) = -(m+1)+k$. Therefore, the theorem is proved by induction.
\end{proof}

\begin{remark} 
Caroline Klivans pointed out the following to us. For $\mathcal{C}$ a labeled configuration and~$k \in \mathbb{Z}_{>0}$, define
\[ \psi_{k}(\mathcal{C}) \coloneqq \sum_{1\leq \ell \leq k} \mathcal{C}(\ell).\]
Suppose $\mathcal{C}'$ is obtained from $\mathcal{C}$ by firing chips~$(i)$ and $(j)$ with $i < j$; then it is easy to see that
\[  \psi_{k}(\mathcal{C}') = \begin{cases} \psi_{k}(\mathcal{C}) - 1 &\textrm{if $i \leq k$ and $j > k$}; \\ \psi_{\ell}(\mathcal{C}) &\textrm{otherwise}. \end{cases}\]
Thus for any $k \in \mathbb{Z}_{>0}$, in any labeled stabilization process $\mathcal{C} \to \mathcal{D}$ the number of times that a chip~$(i)$ with $i \leq k$ fired with a chip~$(j)$ where $k < j$ is $\psi_{k}(\mathcal{C}) - \psi_{k}(\mathcal{D})$. So as a consequence of Theorem~\ref{thm:main} we arrive at the following global invariant of the labeled chip-firing process with initial configuration $\Delta^n$ for $n$ even, which can be compared to Proposition~\ref{prop:runtime}.
\begin{cor} \label{cor:main}
Let $n \coloneqq 2m$ be even, and $1 \leq k \leq n$. Then in any labeled stabilization process $\Delta^n \to \mathcal{D}$ the number of times that a chip~$(i)$ with $i \leq k$ fired with a chip~$(j)$ where~$k < j$ is $(m-|k-m|)(m+|k-m|+1)/2$.
\end{cor}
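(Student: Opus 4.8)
The plan is to apply directly the invariance observation recorded in the preceding remark. Since $\psi_k$ changes by exactly $-1$ under a firing of chips $(i),(j)$ with $i \le k < j$, and is unchanged under every other firing, the number of such ``cross-firings'' in any stabilization process $\Delta^n \to \mathcal{D}$ equals $\psi_k(\Delta^n) - \psi_k(\mathcal{D})$. Because Theorem~\ref{thm:main} guarantees that the stable configuration $\mathcal{D}$ is the same sorted configuration no matter how we fire, this count is already known to be independent of the choices made, and it remains only to evaluate the two endpoint values of $\psi_k$ and simplify their difference.

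The first value is immediate: every chip of $\Delta^n$ sits at the origin, so $\psi_k(\Delta^n) = \sum_{\ell=1}^k 0 = 0$. For the second I would read the positions off Theorem~\ref{thm:main}: for $n = 2m$ we have $\mathcal{D}(\ell) = \ell - m - 1$ when $1 \le \ell \le m$ and $\mathcal{D}(\ell) = \ell - m$ when $m < \ell \le 2m$. Summing the first $k$ of these splits into two regimes. If $1 \le k \le m$, then all the relevant chips lie in the left block and $\psi_k(\mathcal{D}) = \sum_{\ell=1}^k (\ell - m - 1) = \tfrac{k(k-2m-1)}{2}$. If $m < k \le 2m$, then writing $t \coloneqq k - m$, the full left block contributes $-\tfrac{m(m+1)}{2}$ and the first $t$ chips of the right block contribute $\tfrac{t(t+1)}{2}$, giving $\psi_k(\mathcal{D}) = \tfrac{t(t+1) - m(m+1)}{2}$.

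Finally I would simplify the difference $\psi_k(\Delta^n) - \psi_k(\mathcal{D}) = -\psi_k(\mathcal{D})$ in each regime and match it to the target. The unifying observation is that in both cases the answer has the form $\tfrac{m(m+1) - s(s+1)}{2}$ with $s \coloneqq |k - m|$: for $k \le m$ one takes $s = m - k$ and checks that $-\tfrac{k(k-2m-1)}{2} = \tfrac{k(2m+1-k)}{2}$ coincides with it, while for $k > m$ the expression $\tfrac{m(m+1) - t(t+1)}{2}$ is already of this shape with $s = t = k - m$. Since $(m-s)(m+s+1) = m(m+1) - s(s+1)$, this is exactly the claimed $(m-|k-m|)(m+|k-m|+1)/2$. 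There is no genuine obstacle beyond careful bookkeeping here; the only mild subtlety is to organize the two piecewise sums so that they collapse into the single $|k-m|$ formula, and to keep the arithmetic-series computations straight.
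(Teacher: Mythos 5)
Your proposal is correct and follows exactly the paper's own route: apply the $\psi_k$ invariance from the preceding remark to reduce the count to $-\psi_k(\mathcal{D})$, then evaluate it using the sorted positions given by Theorem~\ref{thm:main}. The paper states this computation in one line, and your case analysis ($k\le m$ versus $k>m$, unified via $(m-s)(m+s+1)=m(m+1)-s(s+1)$ with $s=|k-m|$) is precisely the arithmetic it leaves implicit.
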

\begin{proof}
As just mentioned, the number of such firings is $\psi_{k}(\Delta^n) - \psi_{k}(\mathcal{D}) = -\psi_{k}(\mathcal{D})$. From Theorem~\ref{thm:main} we can compute that $ -\psi_{k}(\mathcal{D})=(m-|k-m|)(m+|k-m|+1)/2$.
\end{proof}
In fact, Corollary~\ref{cor:main} is easily seen to be equivalent to Theorem~\ref{thm:main}. But we know no simpler reason why Corollary~\ref{cor:main} should be true, beyond the proof of Theorem~\ref{thm:main} we have given above.
\end{remark}

\section{Extensions} \label{sec:extensions}

\subsection{Other graphs}

An obvious question is if the labeled chip-firing process can be extended to other graphs beyond~$\mathbb{Z}$. Ideally any such extension would have unique labeled stabilizations for many of its initial configurations. While we are far from being able to propose an interesting extension of labeled chip-firing to arbitrary graphs, we have found that several minor variants of the infinite path (apparently) continue to exhibit confluence of certain initial configurations. 

Let~$G$ be a directed graph with vertex set~$V$. We allow parallel edges and loops. There is a well-known notion of unlabeled chip-firing on $G$ (see e.g.~\cite[\S6]{corry2017divisors}). Briefly, we study the evolution of a configuration $c$ of $n$ indistinguishable chips on $V$ under the following chip-firing moves: we may fire a vertex $v \in V$ as long as it has as many chips as its outdegree~$\mathrm{outdeg}_G(v)$; firing at $v$ transports one chip from $v$ to $u$ along each outgoing edge $(v,u)$ from $v$. (Transporting a chip along a loop $(v,v)$ keeps that chip at~$v$, but loops do affect the dynamics of the system because they force there to be more chips at a vertex before it can fire.) In this context, we consider an undirected graph to be a directed graph where each undirected edge $\{u,v\}$ corresponds to two directed edges~$(u,v)$ and $(v,u)$; however, we will always treat loops $(v,v)$ as directed loops even on otherwise undirected graphs. In the examples that follow we will always have $V=\mathbb{Z}$ or~$\mathbb{N}$ and we will draw these graphs in the plane as a number line in the usual way. Consequently, we can consider $\delta_{[a,b]}$  and $\delta_i$ to be configurations on~$G$ for appropriate values of $a,b,i$. For clarity we say things like ``$c$ is $G$-stable'' to mean that~$c$ is stable when considered as a configuration on $G$. We write~$c \xrightarrow{G} d$ to mean that~$d$ is obtained from $c$ by a series of $G$-chip-firing moves, and we use $\widetilde{c}^{\, G}$ to denote the $G$-stabilization of $c$ (which will exist and be unique for all graphs under consideration). Now let us describe one framework for labeled chip-firing on $G$. A labeled configuration~$\mathcal{C}$ on~$G$ is some assignment of a finite number of distinguishable chips, labeled by positive integers, to~$V$. Suppose that each vertex $v \in V$ has been given a total order~$e_1 < e_2 < \ldots < e_{\mathrm{outdeg}_G(v)}$ on its outgoing edges. Then a labeled chip-firing move at $v$ consists of choosing $\mathrm{outdeg}_G(v)$ chips $(i_1), (i_2), \ldots, (i_{\mathrm{outdeg}_G(v)})$ which all occupy~$v$, with~$i_1 < i_2 < \cdots < i_{\mathrm{outdeg}_G(v)}$, and transporting $(i_1)$ along $e_1$, $(i_2)$ along~$e_2$, et cetera. The graphs we consider here all have the same local structure: at any vertex~$v \in V$, there are $\ell_v$ directed edges from~$v$ to the vertex immediately to its left, $m_v$ directed loops at~$v$, and $r_v$ directed edges from~$v$ to the vertex immediately to its right; i.e., each $v \in V$ looks like the following:
\begin{center}
\begin{tikzpicture}[scale=1]
	\SetFancyGraph
	\Vertex[NoLabel,x=-2,y=0]{-1}
	\Vertex[LabelOut,Lpos=90, Ldist=.1cm,x=0,y=0]{v}
	\Vertex[NoLabel,x=2,y=0]{1}
	\Edges[style={thick,->,bend right=65}](v,-1)
	\Edges[style={thick,->,bend right=35}](v,-1)
	\Edges[style={thick,->,bend right=0}](v,-1)
	\Edges[style={thick,->,bend right=-35}](v,-1)
	\Edges[style={thick,->,out=330,in=270,looseness=30}](v,v)
	\Edges[style={thick,->,out=300,in=240,looseness=30}](v,v)
	\Edges[style={thick,->,bend right=35}](v,1)
	\Edges[style={thick,->,bend right=0}](v,1)
	\Edges[style={thick,->,bend right=-35}](v,1)
	\node at (-1,-0.75) {$\ell_v$};
	\node at (0,-1) {$m_v$};
	\node at (1,-0.75) {$r_v$};
\end{tikzpicture}
\end{center}
The order we give to the outgoing edges at $v$ will always be: all the left edges are less than the loops, which in turn are less than the right edges. Thus, firing at $v$ consists of choosing $\ell_v + m_v + r_v$ chips occupying $v$, moving the $\ell_v$ of them with the smallest labels to the left, the $r_v$ of them with the largest labels to the right, and keeping the~$m_v$ ``middle'' chips at $v$. We use notation for labeled configruations on $G$ in a predictable way: we write $[\mathcal{C}]$ to denote the underlying unlabeled configuration of $\mathcal{C}$; we say $\mathcal{C}$ is $G$-stable if $[\mathcal{C}]$ is $G$-stable; we write $\mathcal{C} \xrightarrow{G} \mathcal{D}$ to mean that $\mathcal{D}$ is obtained from~$\mathcal{C}$ by a series of labeled chip-firing moves on $G$. Since the origin will belong to~$V$ in all graphs we consider, we can still consider $\Delta^n$ to be a labeled configuration on~$G$. We are interested in confluence, of course, so let us say that $G$ \emph{sorts} $\Delta^n$ if there is a unique $G$-stable $\mathcal{D}$ with $\Delta^n \xrightarrow{G} \mathcal{D}$, and for all $1 \leq i \leq j \leq n$, we have $\mathcal{D}(i) \leq \mathcal{D}(j)$.

First let us describe some ``one-way infinite'' paths for which it is quite easy to see that sorting always occurs. 

\begin{prop} \label{prop:nundirected}
Let $G$ be the undirected graph with vertex set $\mathbb{N}$ and with a single edge $\{i,i+1\}$ for each $i \in \mathbb{N}$. Then $G$ sorts $\Delta^n$ for any $n \geq 1$.
\end{prop}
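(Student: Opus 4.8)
The plan is to show that for every $1 \le k \le n$ the $k$ lowest-labeled chips $(1),\ldots,(k)$ occupy exactly the vertices $\{1,\ldots,k\}$ in the stable configuration $\mathcal D$; differencing consecutive values of $k$ then forces $\mathcal{D}(k)=k$, which is the sorted configuration and in particular shows that the reachable stable $\mathcal D$ is unique. This mirrors the structure of the proof of Theorem~\ref{thm:main}, but with the roles of small/large labels and of left/right interchanged, and it is genuinely easier because the boundary vertex $0$ behaves as a wall that confines chips to the left.

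First I would record the unlabeled stabilization $\widetilde{n\delta_0}^{\,G}=\delta_{[1,n]}$, the half-line analogue of Lemma~\ref{lem:deltastab}. Since a vertex $v\ge 1$ has outdegree $2$ while the wall vertex $0$ has outdegree $1$, a configuration on $G$ is $G$-stable iff it has no chip at $0$ and at most one chip at each $v\ge1$, and an easy induction on $n$ (via a half-line analogue of Proposition~\ref{prop:linestab}) shows that the $n$ chips spread to the leftmost available sites $1,\ldots,n$. In particular $[\mathcal D]=\delta_{[1,n]}$, so $\mathcal D$ assigns the labels bijectively to $\{1,\ldots,n\}$ and every restriction of $\mathcal D$ to a set of labels is again $G$-stable with at most one chip per vertex.

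Next I would set up the dual of the rightward-reachability machinery. Let $\mathcal C|_{\le k}$ denote the restriction of $\mathcal C$ to the chips with labels in $[k]$, and say $d$ is \emph{leftward-reachable} from $c$ on $G$, written $c \xrightarrow{L,G} d$, if $d$ is obtained from $c$ by a sequence of $G$-chip-firing moves and single leftward chip moves. Exactly as in Proposition~\ref{prop:labeledrightreach} one checks that $\mathcal C \xrightarrow{G} \mathcal D$ implies $[\mathcal C|_{\le k}] \xrightarrow{L,G} [\mathcal D|_{\le k}]$: a firing of $(a),(b)$ with $a<b$ at a vertex $\ge 1$ either keeps both (a chip-firing move), keeps only $(a)$ and moves it left (a leftward move), or keeps neither, while a firing at the wall $0$ either moves a low chip right --- which at $0$ is exactly a $G$-chip-firing move --- or has no effect on the restriction. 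Granting the half-line analogue of Lemma~\ref{lem:stabcover} that $G$-stabilization is order-preserving for $\le$, the leftward analogue of Corollary~\ref{cor:rightreach} (deduced from order-preservation exactly as Corollary~\ref{cor:rightreach} is deduced from Lemma~\ref{lem:stabcover}) yields $\widetilde{[\mathcal D|_{\le k}]}^{\,G} \le \widetilde{[\Delta^n|_{\le k}]}^{\,G}$. Since $[\Delta^n|_{\le k}]=k\delta_0$ stabilizes to $\delta_{[1,k]}$ and $[\mathcal D|_{\le k}]$ is already $G$-stable, this reads $[\mathcal D|_{\le k}] \le \delta_{[1,k]}$; taking maxima gives $\mathrm{max}([\mathcal D|_{\le k}]) \le k$, so the $k$ lowest chips sit at $k$ distinct vertices of $\{1,\ldots,n\}$ that are all $\le k$, hence occupy precisely $\{1,\ldots,k\}$, as desired.

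The main obstacle is the half-line order-preservation statement underlying the last step, i.e.\ the analogue of Lemma~\ref{lem:stabcover} on $G$. Its proof follows the same template --- reduce a single cover $c \lessdot d$ to stabilizing a near-interval $\delta_{[a+1,b-1]}+\delta_i$ and invoke the stabilization formula --- but one must first extend Proposition~\ref{prop:linestab} to the half-line, treating the new case in which the relevant interval abuts the wall $0$. A subtlety worth flagging is that on $G$ one should prove order-preservation rather than cover-preservation: because firings at $0$ do not conserve $\varphi_\infty$, two configurations related by a cover can stabilize to the same configuration (for instance $2\delta_0$ and $\delta_0+\delta_1$ both stabilize to $\delta_{[1,2]}$), so covers may legitimately collapse, but the weak order $\le$ is still preserved, which is all that the argument above requires.
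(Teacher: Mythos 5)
Your proposal is correct, and the route you sketch can indeed be completed, but it is genuinely different from --- and much heavier than --- the paper's own proof. The paper uses a two-line confinement argument: on this half-line, the only way a chip can move rightward out of a vertex $v\geq 1$ is to be the \emph{larger} chip in a firing at $v$ (the firing at the wall $0$ only pushes a single chip from $0$ to $1$); hence chip $(1)$ can never leave $\{0,1\}$, hence chip $(2)$ can never fire as the larger chip at vertex $2$ and never leaves $\{0,1,2\}$, and inductively $(k)$ never passes vertex $k$ at \emph{any} point of the process. Combined with $\widetilde{n\delta_0}^{\,G}=\delta_{[1,n]}$, this immediately forces $\mathcal{D}(k)=k$. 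You instead rebuild the full monotonicity machinery of Section~\ref{sec:main} on the half-line: leftward-reachability of the restrictions $[\mathcal{C}|_{\le k}]$, a half-line order-preservation lemma playing the role of Lemma~\ref{lem:stabcover}, and the analogue of Corollary~\ref{cor:rightreach}. Every step you give does check out: the case analysis showing $[\mathcal{C}|_{\le k}]$ evolves by $G$-firings and leftward moves is sound; your observation that cover-preservation fails on $G$ while order-preservation survives is exactly the right correction (your example of $2\delta_0$ and $\delta_0+\delta_1$ both stabilizing to $\delta_{[1,2]}$ is accurate, reflecting that wall firings do not preserve $\varphi_\infty$); and the wall case of the half-line analogue of Proposition~\ref{prop:linestab} works out to $\widetilde{\delta_{[1,b-1]}+\delta_i}^{\,G}=\delta_{[1,b]}$ independently of $i$, which is precisely why covers can collapse but never reverse. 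What your approach buys is uniformity and potential generality: it parallels the proof of Theorem~\ref{thm:main} and is the kind of argument that survives modifications of the graph near the wall (compare the modifications made in the proof of Theorem~\ref{thm:loops}). What it costs is that the deferred order-preservation lemma is the hardest part of your proof and is only sketched, whereas the paper's confinement argument needs nothing beyond the formula for $\widetilde{n\delta_0}^{\,G}$ and moreover yields the stronger statement that $\mathcal{C}(k)\le k$ holds at every intermediate configuration, not only at stabilization.
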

\begin{proof}
It is easy to check inductively that $\widetilde{n\delta_0}^{G} = \delta_{[1,n]}$. Then we can apply a very simple ``reachability'' argument. Note that chip~$(1)$ can never make it to vertex~$2$ (or any vertex right of~$2$) because to do so it would have to fire with a smaller-labeled chip at vertex~$1$. Similarly, chip~$(2)$ can never make it to vertex $3$ because it would have to fire with a smaller-labeled chip at vertex~$2$, and $(1)$ will never be at vertex~$2$. And thus~$(3)$ can never make it to vertex~$4$, and so on.
\end{proof}

\begin{prop} \label{prop:ndirected}
Let $G$ be the directed graph with vertex set $\mathbb{N}$ which has a single directed edge $(i,i+1)$ and a loop $(i,i)$ for each $i \in \mathbb{N}$. Then $G$ sorts $\Delta^n$ for any $n \geq 1$.
\end{prop}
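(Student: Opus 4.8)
The plan is to mimic the reachability argument used for Proposition~\ref{prop:nundirected}, exploiting the fact that on this graph a firing at a vertex $v$ keeps the smaller-labeled of the two chosen chips at $v$ (sending it along the loop) and moves the larger-labeled chip one step to the right. In particular, on this graph a chip can only ever move rightward, and it moves only when it is the \emph{larger} of the two chips involved in a firing.

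First I would record the unlabeled stabilization. A single unlabeled firing at $v$ removes one chip from $v$ and adds one chip to $v+1$, since the loop returns the other chip to $v$. Starting from $n\delta_0$, I would show by induction on $n$ that $\widetilde{n\delta_0}^{\,G} = \delta_{[0,n-1]}$: writing $\widetilde{n\delta_0}^{\,G} = \widetilde{\delta_{[0,n-2]} + \delta_0}^{\,G}$ and stabilizing, the extra chip at the origin cascades rightward, extending the occupied interval by exactly one vertex. Consequently, in any $G$-stable $\mathcal{D}$ with $\Delta^n \xrightarrow{G} \mathcal{D}$, the chips occupy exactly the vertices $0,1,\dots,n-1$, one chip on each.

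The heart of the proof is the following reachability claim: for every $1 \le k \le n$, chip $(k)$ never occupies a vertex to the right of $k-1$ during the process $\Delta^n \xrightarrow{G} \mathcal{D}$. I would prove this by induction on $k$. For $k=1$, chip $(1)$ carries the globally smallest label, so in any firing involving it, it is the smaller chip and hence stays put; thus $(1)$ never leaves the origin. For the inductive step, observe that for chip $(k)$ to move from a vertex $v$ to $v+1$ it must be the larger chip in a firing at $v$, so some chip $(j)$ with $j<k$ must also occupy $v$ at that moment. By the inductive hypothesis $(j)$ can be no further right than $j-1 \le k-2$, forcing $v \le k-2$ and hence $v+1 \le k-1$. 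Therefore chip $(k)$ can never advance past vertex $k-1$, confining it to $\{0,1,\dots,k-1\}$.

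Finally I would combine the two ingredients. Since $(1),\dots,(k)$ are confined to the intervals $\{0\},\{0,1\},\dots,\{0,\dots,k-1\}$ respectively, and the stable configuration places exactly one chip at each of $0,\dots,n-1$, a greedy induction pins down every position: $(1)$ must sit at $0$, which forces $(2)$ (confined to $\{0,1\}$) to sit at $1$, which forces $(3)$ to sit at $2$, and so on, so that $(k)$ sits at $k-1$ for every $k$. This simultaneously shows that the stable configuration is unique and that it is sorted, proving that $G$ sorts $\Delta^n$. I do not expect any serious obstacle; the only point requiring care is the reachability induction, and the essential difference from Proposition~\ref{prop:nundirected} is merely that the smaller-labeled chip now remains in place (via the loop) rather than moving leftward---indeed this only makes the confinement cleaner, since chip $(1)$ is now completely immobile.
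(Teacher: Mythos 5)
Your proof is correct and is essentially the paper's own argument in expanded form: the paper likewise computes $\widetilde{n\delta_0}^{\,G} = \delta_{[0,n-1]}$ inductively and then runs the same reachability induction (chip~$(1)$ can never reach vertex~$1$, hence chip~$(2)$ can never reach vertex~$2$, and so on). Your write-up just makes explicit the confinement claim and the final pigeonhole step that the paper leaves implicit.
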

\begin{proof}
The proof is the same as the proof of Proposition~\ref{prop:nundirected}: we can check inductively that $\widetilde{n\delta_0}^{G} = \delta_{[0,n-1]}$; but chip~$(1)$ can never it make it to vertex~$1$, so chip~$(2)$ can never make it to vertex~$2$, and so on.
\end{proof}

The directed graph appearing in Proposition~\ref{prop:ndirected} is noteworthy because unlike the other undirected graphs we have been studying, it takes only $\Theta(n^2)$ firings to stabilize. Indeed, the labeled chip-firing process on this graph basically caries out ``bubble sort,'' or one of the other well-known $\Theta(n^2)$ sorting algorithms, depending on the order in which we fire the vertices. To our knowledge, this is the best you can do with labeled chip-firing in terms of run time: we know of no graph that sorts in time~$\Theta(n\, \log n)$.

Next, we will consider some variants of the two-way infinite path~$\mathbb{Z}$, specifically, graphs obtained from~$\mathbb{Z}$ by adding loops or parallel edges. Here we mostly offer conjectures, except for the following minor modification of Theorem~\ref{thm:main}.

\begin{thm} \label{thm:loops}
Let $G$ be the graph obtained from the infinite path~$\mathbb{Z}$ by adding $\ell$ loops at the origin. Then $G$ sorts $\Delta^n$ whenever~$n \equiv \ell \mod 2$.
\end{thm}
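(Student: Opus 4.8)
The goal is to reduce Theorem~\ref{thm:loops} to the already-proved Theorem~\ref{thm:main}. The key structural observation is that adding $\ell$ loops at the origin changes the firing threshold only at the origin: a vertex $v \neq 0$ still fires once it has at least two chips, moving the least-labeled chip left and the greatest right, while the origin now requires $2 + \ell$ chips to fire, at which point the $\ell$ middle-labeled chips stay put and the remaining two chips split left/right in the usual way. So the whole process differs from ordinary labeled chip-firing on $\mathbb{Z}$ only in how chips are ``held'' at the origin. Since $\Delta^n$ starts all $n$ chips at the origin and $n \equiv \ell \pmod 2$, my first step is to fire the origin as much as possible and track what ends up stranded there versus what gets pushed outward.

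\textbf{Step 1: Analyze the first stage at the origin.}
I would first argue that the dynamics at the origin can be decoupled from the rest. Concretely, firing the origin repeatedly (until it can no longer fire) ejects chips symmetrically: each origin-firing sends the current smallest label left and the current largest label right, while the $\ell$ middle chips are temporarily immobilized. Because $n \equiv \ell \pmod 2$, the number $n - \ell$ of chips that eventually leave the origin is even, say $n - \ell = 2m'$. I would verify, using Lemma~\ref{lem:symmetry} (which still applies since $(\Delta^n)^* = \Delta^n$ and the loop-modified firing move respects duality), that the chips leaving the origin are exactly the $m'$ smallest labels $(1), \ldots, (m')$ going left and the $m'$ largest labels $(n-m'+1), \ldots, (n)$ going right, leaving the $\ell$ middle-labeled chips $(m'+1), \ldots, (m'+\ell)$ at the origin. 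The heart of this step is to show that once $\ell$ chips remain at the origin they can never fire again, which requires a reachability argument: no more chips can return to the origin and push it back over threshold, because returning would require an incoming firing from a neighbor, and the outward-moving chips are monotonically separated.

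\textbf{Step 2: Reduce to the loopless case and transfer Theorem~\ref{thm:main}.}
After the origin has stabilized, the configuration consists of the $\ell$ middle chips pinned at the origin plus two ``wings'' of chips on $\mathbb{Z}_{<0}$ and $\mathbb{Z}_{>0}$ that now evolve under ordinary (loopless) labeled chip-firing, since away from the origin the graph is just $\mathbb{Z}$. The crucial point is that the middle chips at the origin act as an impassable barrier, so the left wing and right wing never interact and, by symmetry, it suffices to handle one of them. I would then want to show each wing sorts. The cleanest route is to reuse the machinery of Section~\ref{sec:main} essentially verbatim: Corollary~\ref{cor:rightreach} and Lemma~\ref{lem:labeledchippos} bound how far each chip can move based on its label, and the inductive argument of Theorem~\ref{thm:main} (peeling off the extreme labels one at a time via Lemmas~\ref{lem:prooflem1} and~\ref{lem:prooflem2}) applies to the outward-moving chips with the origin playing the role of a fixed boundary. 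Because the labels on each wing are consecutive and the total on each side is $m'$, the induction forces $\mathcal{D}(k) = -(m'+1)+k$ for the left wing and the symmetric statement for the right, with the $\ell$ middle chips fixed at $0$; this yields a unique stable sorted configuration.

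\textbf{Main obstacle.}
I expect the subtle part to be Step~1: rigorously justifying that the $\ell$ middle-labeled chips are precisely the ones that stay at the origin and that the origin genuinely stops firing for good. Unlike loopless chip-firing, the loops create a vertex with nontrivial ``capacity,'' and one must rule out the scenario where chips that have left the origin drift back and reactivate it; this is where a careful rightward-reachability analysis in the spirit of Proposition~\ref{prop:labeledrightreach} and Corollary~\ref{cor:rightreach} is needed, tracking the unlabeled configuration $\widetilde{n\delta_0}^{\,G}$ to pin down exactly which vertices are occupied. Once the middle chips are certified as a permanent barrier, the reduction to Theorem~\ref{thm:main} on each wing is largely mechanical.
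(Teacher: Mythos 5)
Your proposal has a genuine gap, and it is fatal to both steps. In Step 1 you claim that firing the origin repeatedly ejects ``the current smallest label left and the current largest label right,'' so that the left wing receives exactly the labels $(1),\ldots,(m')$ and the right wing exactly $(n-m'+1),\ldots,(n)$. This is not how the firing move works: a firing at the origin selects \emph{any} $2+\ell$ chips present there and moves the extremes \emph{of that selection}, so a small-labeled chip can easily be sent rightward. Already in the loopless case $\ell=0$, $n=4$ (which is the paper's own introductory example), one legal execution fires $(1),(2)$ and then $(3),(4)$, after which the left wing holds $(1)$ and $(3)$ and the right wing holds $(2)$ and $(4)$ --- not the two smallest versus the two largest. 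Sorting happens only because those misplaced chips later travel \emph{back} to the origin and fire there again; indeed by Proposition~\ref{prop:runtime} the origin fires $m(m+1)/2$ times in total, not just during an initial stage. So your Step 2 premises --- that the $\ell$ pinned chips form an ``impassable barrier,'' that the origin ``stops firing for good,'' and that the two wings never interact --- are all false, and the reduction to two independent copies of Theorem~\ref{thm:main} collapses. There is also a more structural problem: even if your preferred schedule (``fire the origin as much as possible first'') did lead to a sorted configuration, the theorem quantifies over \emph{all} firing sequences, and labeled chip-firing is not an abelian network, so analyzing one schedule proves nothing about the others; establishing order-independence is precisely the hard content.

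The paper's actual proof does not attempt any decoupling. Instead it re-runs the entire inductive machinery of Theorem~\ref{thm:main} on the loop-modified graph $G$: it computes $\widetilde{n\delta_0}^{\,G}$ inductively, checks that Lemma~\ref{lem:symmetry} and Proposition~\ref{prop:labeledrightreach} survive verbatim, generalizes the key Lemma~\ref{lem:stabcover} (stabilization preserves cover relations) with $a$ and $b$ redefined via $c(j) \leq \mathrm{outdeg}_G(j)-2$, deduces the analog of Lemma~\ref{lem:labeledchippos}, and then pushes the induction on $k$ through analogs of the internal Lemmas~\ref{lem:prooflem1} and~\ref{lem:prooflem2} (with ``fire together'' interpreted to allow group firings at the origin). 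That route controls every chip's position under arbitrary firing orders via the partial-order/rightward-reachability apparatus, which is exactly what your barrier argument tries to avoid but cannot.
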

\begin{proof}
The proof is basically the same as the proof of Theorem~\ref{thm:main} we gave in Section~\ref{sec:main}. Let us sketch how to modify that proof to accommodate loops at the origin. First of all, one can compute inductively that 
\[\widetilde{n\delta_0}^{G} = \delta_{[-\lfloor(n-\ell)/2\rfloor,-1]} + \delta_{[1,\lfloor(n-\ell)/2\rfloor]} + \begin{cases} \ell\delta_0 &\textrm{if $n \equiv \ell \mod 2$}, \\ (\ell+1)\delta_0 &\textrm{otherwise}. \end{cases}\]
It is easy to see that the symmetry lemma, Lemma~\ref{lem:symmetry}, remains true in this context: we have $\Delta^n \xrightarrow{G} \mathcal{C} \Rightarrow \Delta^n \xrightarrow{G} \mathcal{C}^{*}$. It is also easy to see that Proposition~\ref{prop:labeledrightreach} remains true in this context: we have $\mathcal{C} \xrightarrow{G} \mathcal{D} \Rightarrow [\mathcal{C}\mid_{\geq k}] \xrightarrow{R} [\mathcal{D}\mid_{\geq k}]$ for all~$k \geq 1$. Moreover, the key lemma, Lemma~\ref{lem:stabcover}, also remains true: we have $c \lessdot d \Rightarrow \widetilde{c}^{G} \lessdot \widetilde{d}^{G}$; we should modify the proof of Lemma~\ref{lem:stabcover} given above by considering $a := \mathrm{max}\{j \leq i\colon c(j) \leq \mathrm{outdeg}_G(j)-2\}$ and $b := \mathrm{min}\{j \geq i+1\colon c(j) \leq \mathrm{outdeg}_G(j)-2\}$ instead. Altogether, this means we the following generalization of Lemma~\ref{lem:labeledchippos}: suppose $\Delta^n \xrightarrow{G} \mathcal{C}$; then, for all $1 \leq k \leq n$,
\[-\lfloor (n-\ell+1-k)/2 \rfloor \leq \mathcal{C}(k) \leq \lfloor (k-\ell)/2 \rfloor.\] 
With this set-up, how do we prove that sorting occurs? Set $n := 2m+\ell$ and suppose that~$\Delta^n \xrightarrow{G} \mathcal{D}$ where $\mathcal{D}$ is $G$-stable. It suffices to prove that $\mathcal{D}(k) = -(m+1) + k$ for all~$1 \leq k \leq m$. This is because the symmetry lemma will then imply~$\mathcal{D}(k) = k-(m+\ell)$ for all $m+\ell+1 \leq k \leq 2m+\ell$; and then, since we know what $\widetilde{n\delta_0}^{G}$ looks like, we will have to have $\mathcal{D}(k) = 0$ for all $m+1 \leq k \leq m+\ell$. So we prove $\mathcal{D}(k) = -(m+1) + k$ for all $1 \leq k \leq m$ by induction on $k$. The base case $k=1$ follows from the generalization of  Lemma~\ref{lem:labeledchippos} stated above. Thus we assume $k \geq 2$ and the claim holds for smaller values of~$k$. Again, we want to prove analogs of the internal lemmas in the proof of Theorem~\ref{thm:main}. First of all, in both Lemma~\ref{lem:prooflem1} and Lemma~\ref{lem:prooflem2} we should interpret ``chip~$(i)$ fires together with chip~$(j)$'' to include the possibility that $(i)$ and $(j)$ fire together in a group with other chips at the origin as well. With this understood, the statement and proof of Lemma~\ref{lem:prooflem2} goes through exactly. As for Lemma~\ref{lem:prooflem1}, the statement goes through exactly. There is one slight change necessary in the proof, which is that we do not necessarily have $[\mathcal{C} \setminus \{k\}] \lessdot [\mathcal{C}' \setminus \{k\}]$ (because it could be that $(k)$ traveled along a loop during the firing between steps~$\mathcal{C}'$ and $\mathcal{C}$); rather, we have $c \lessdot [\mathcal{C}' \setminus \{k\}]$ for some~$c \xrightarrow{R} [\mathcal{C} \setminus \{k\}]$. This still forces $[-m,-(m+1)+k-1] \subseteq \mathrm{supp}(\widetilde{[\mathcal{C}'\setminus \{k\}]}^{G})$ as required, and the rest of the proof of Lemma~\ref{lem:prooflem1} then goes through in the same way. The analogs of Lemma~\ref{lem:prooflem1} and Lemma~\ref{lem:prooflem2} together immediately imply $\mathcal{D}(k) = -(m+1) + k$, and so the proof is complete by induction.
\end{proof}

For~$S \subseteq \mathbb{Z}$, let us use~$\mathbb{Z}\langle S \rangle$ to denote the graph obtained from the infinite path~$\mathbb{Z}$ by adding a single loop at each~$i \in S$.

\begin{conj} \label{conj:loops}
Let $S \subseteq \mathbb{Z}$. Suppose $n \in \mathbb{N}$ is such that:
\begin{itemize}
\item $\mathrm{min}(\widetilde{n\delta_0}^{\mathbb{Z}\langle S \rangle}) < \mathrm{min}(\widetilde{(n-1)\delta_0}^{\mathbb{Z}\langle S \rangle})$;
\item $\mathrm{max}(\widetilde{n\delta_0}^{\mathbb{Z}\langle S \rangle}) > \mathrm{max}(\widetilde{(n-1)\delta_0}^{\mathbb{Z}\langle S \rangle})$.
\end{itemize}
Then $\mathbb{Z}\langle S \rangle$ sorts $\Delta^n$.
\end{conj}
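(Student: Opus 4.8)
Set $G := \mathbb{Z}\langle S \rangle$. The plan is to adapt the strategy behind Theorem~\ref{thm:main} and its loop-variant Theorem~\ref{thm:loops}, the essential new difficulty being that $G$ need not be symmetric about the origin, so the symmetry lemma (Lemma~\ref{lem:symmetry}) is unavailable and one cannot deduce the right half of the sorted configuration from the left half. I would therefore run the positional induction independently from both ends. First I would record the infrastructure, all of which survives on $G$ (and, by the reflection $i \mapsto -i$ carrying $G$ to $\mathbb{Z}\langle -S\rangle$, on its mirror image): the rightward-reachability statement of Proposition~\ref{prop:labeledrightreach}, and the key cover-preservation Lemma~\ref{lem:stabcover} with the outdegree-aware endpoints $a := \mathrm{max}\{j \le i : c(j) \le \mathrm{outdeg}_G(j)-2\}$ and $b := \mathrm{min}\{j \ge i+1 : c(j) \le \mathrm{outdeg}_G(j)-2\}$ already used in Theorem~\ref{thm:loops}. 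These yield the reachability corollary (Corollary~\ref{cor:rightreach}) and, by reflection, its leftward mirror. Note that the statistic $\varphi_\ell$ still decreases by exactly $1$ when vertex $\ell+1$ fires and is otherwise unchanged, since a firing at a loop vertex leaves its retained chip fixed and moves the other two exactly as a path-firing does.

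Abbreviate $L_j := \mathrm{min}(\widetilde{j\delta_0}^{G})$ and $R_j := \mathrm{max}(\widetilde{j\delta_0}^{G})$. Since $\widetilde{(j+1)\delta_0}^{G}$ is obtained by adding one chip at the origin to $\widetilde{j\delta_0}^{G}$ and restabilizing, and $G$-firings only push the support outward, $L_j$ is non-increasing and $R_j$ is non-decreasing in $j$; the two hypotheses then read exactly $L_n < L_{n-1}$ and $R_n > R_{n-1}$. Applying rightward-reachability to the top $n+1-k$ chips and leftward-reachability to the bottom $k$ chips gives the two-sided bound $L_{n+1-k} \le \mathcal{D}(k) \le R_k$ for any $G$-stable $\mathcal{D}$ with $\Delta^n \xrightarrow{G} \mathcal{D}$, the analog of Lemma~\ref{lem:labeledchippos}. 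Let $p_1 \le p_2 \le \cdots \le p_n$ be the occupied vertices of $\widetilde{n\delta_0}^{G}$ listed with multiplicity (so a loop vertex carrying two chips appears twice); the goal is $\mathcal{D}(k) = p_k$ for all $k$.

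For the base cases, the strict inequality $L_n < L_{n-1}$ together with the lower bound forces every chip $(i)$ with $i \ge 2$ to lie strictly right of $L_n$; since vertex $L_n = p_1$ is occupied in $\mathcal{D}$, it must hold chip $(1)$ alone, giving $\mathcal{D}(1) = p_1$ and incidentally that $p_1$ is singly occupied. The mirror argument from $R_n > R_{n-1}$ gives $\mathcal{D}(n) = p_n$. I would then run two inductions inward, from the left establishing $\mathcal{D}(k) = p_k$ for increasing $k$ and from the right establishing $\mathcal{D}(n+1-k) = p_{n+1-k}$. Each inductive step reproduces the two internal lemmas of Theorem~\ref{thm:main}: the analog of Lemma~\ref{lem:prooflem1}, that if $(k)$ overshoots $p_k$ then it never fired with a smaller-labeled chip, proved by the $\varphi_\ell$-monotonicity argument at vertex $p_k$ together with Corollary~\ref{cor:rightreach} and Lemma~\ref{lem:stabcover} controlling $\mathrm{supp}(\widetilde{[\mathcal{D} \setminus \{k\}]}^{G})$; and the analog of Lemma~\ref{lem:prooflem2}, that $(k)$ must fire with $(k-1)$, which carries over essentially unchanged since it uses only uniqueness of the target position of $(k-1)$. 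As in Theorem~\ref{thm:loops}, ``firing together'' must be read to allow $(k)$ and $(k-1)$ to participate in a larger group at a loop vertex, and the case where $(k)$ traverses a loop replaces $[\mathcal{C} \setminus \{k\}] \lessdot [\mathcal{C}' \setminus \{k\}]$ by $c \lessdot [\mathcal{C}' \setminus \{k\}]$ for some $c \xrightarrow{R} [\mathcal{C} \setminus \{k\}]$.

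The main obstacle is that the hypotheses constrain only the outermost transition $n-1 \to n$, whereas the two inductions must march all the way to the middle through possibly many interior loop vertices. One must show that at each step the relevant vertex $p_k$ is filled with the predicted multiplicity and that the support-containment input to the Lemma~\ref{lem:prooflem1} analog is available; the delicate point is to prove \emph{from the two endpoint conditions alone} that the consecutive-support and single-versus-double-occupancy structure needed at each vertex persists inward, and that the left and right inductions meet consistently at the central (possibly doubly occupied) vertices without contradiction. I expect establishing this interior structure — essentially a refined, loop-aware version of the inequality $\mathrm{min}(\widetilde{[\mathcal{C} \setminus \{k\}]}^{G}) \ge L_n$ and its consequences at an arbitrary $p_k$, rather than only at the origin as in Theorem~\ref{thm:loops} — to be the crux, and it is presumably the reason the statement remains only a conjecture.
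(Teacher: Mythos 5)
The statement you are attempting is Conjecture~\ref{conj:loops}: the paper offers no proof of it (only of the special cases $S=\varnothing$ and $S=\{0\}$, via Theorem~\ref{thm:main} and Theorem~\ref{thm:loops}), so your argument has to stand entirely on its own --- and it does not, as you yourself concede in your final paragraph. The infrastructure you assemble is sensible and consistent with the paper's proven special cases: replacing the unavailable symmetry lemma by a mirrored, leftward version of Proposition~\ref{prop:labeledrightreach} and Corollary~\ref{cor:rightreach} (via the reflection carrying $\mathbb{Z}\langle S\rangle$ to $\mathbb{Z}\langle -S\rangle$), obtaining the two-sided bound $L_{n+1-k}\leq\mathcal{D}(k)\leq R_k$ in your notation, and deducing the base cases $\mathcal{D}(1)=p_1$ and $\mathcal{D}(n)=p_n$ from the two strict endpoint hypotheses; your check that $\varphi_\ell$ behaves identically in the presence of loops is also correct. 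Even at this stage, though, one ingredient is asserted rather than proved: the cover-preservation Lemma~\ref{lem:stabcover} on $\mathbb{Z}\langle S\rangle$ requires an analog of Proposition~\ref{prop:linestab} for saturated configurations containing arbitrarily placed loop vertices, and the paper only sketches such a modification for loops at the origin.

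The genuine gap is the inductive step, which is where all the content lies. The paper's internal Lemma~\ref{lem:prooflem1} leans critically on the explicit description of $\widetilde{n\delta_0}$ in Lemma~\ref{lem:deltastab}: the stable support is a full interval of \emph{singly occupied} vertices, so (i) the assumption $\mathcal{D}(k)>p_k$ forces the leftmost $k$ positions to remain in $\mathrm{supp}([\mathcal{D}\setminus\{k\}])$, (ii) the partial-order comparison from Corollary~\ref{cor:rightreach} \emph{together with single occupancy} transfers this containment to $\widetilde{[\mathcal{C}\setminus\{k\}]}$, and (iii) the $\varphi_\ell$ rigidity argument then pins chip $(j)$ strictly to the right of $p_{k-1}$, contradicting the inductive hypothesis. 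On $\mathbb{Z}\langle S\rangle$ none of this structure follows from the hypotheses, which constrain only the single transition from $n-1$ to $n$ chips at the two extreme vertices: interior loop vertices may legally hold two chips in a stable configuration, so the supports of the stabilizations $\widetilde{j\delta_0}^{\mathbb{Z}\langle S\rangle}$ for intermediate $j$ need not be intervals of singly occupied vertices, the counting step (ii) breaks, and your two inward inductions additionally need a consistency argument where they meet at possibly doubly occupied central vertices. You name exactly this as ``the crux'' and leave it open; since that is precisely the step separating the conjecture from its proven special cases, what you have is a plausible program modeled on Theorems~\ref{thm:main} and~\ref{thm:loops}, not a proof.
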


\begin{remark}
Any two of the following three implies the third:
\begin{itemize}
\item $\mathrm{min}(\widetilde{n\delta_0}^{\mathbb{Z}\langle S \rangle}) < \mathrm{min}(\widetilde{(n-1)\delta_0}^{\mathbb{Z}\langle S \rangle})$;
\item $\mathrm{max}(\widetilde{n\delta_0}^{\mathbb{Z}\langle S \rangle}) > \mathrm{max}(\widetilde{(n-1)\delta_0}^{\mathbb{Z}\langle S \rangle})$;
\item $\displaystyle \sum_{i \in S'} i = 0$ where $S' \coloneqq S \cap [\mathrm{min}(\widetilde{n\delta_0}^{\mathbb{Z}\langle S \rangle})+1,\mathrm{max}(\widetilde{n\delta_0}^{\mathbb{Z}\langle S \rangle})-1]$.
\end{itemize}
Thus, Conjecture~\ref{conj:loops} only really applies to $S$ which are ``balanced'' around the origin. Some special cases of Conjecture~\ref{conj:loops} are worth pointing out specifically. The case $S = \varnothing$ of Conjecture~\ref{conj:loops} is just Theorem~\ref{thm:main}, and the case $S = \{0\}$ follows from Theorem~\ref{thm:loops}. The case $S = \mathbb{Z}$ of Conjecture~\ref{conj:loops} says that $\mathbb{Z}\langle \mathbb{Z} \rangle$ sorts $\Delta^n$ as long as $n \equiv 3 \mod 4$.
\end{remark}

\begin{remark}
It is natural to consider also graphs obtained from~$\mathbb{Z}$ by adding multiple loops at vertices. Indeed, with Theorem~\ref{thm:loops} above we have already seen that some such graphs sort~$\Delta^n$ for infinitely many values of~$n$. However, one has to be careful in trying to generalize too much, because, for instance, the graph obtained from~$\mathbb{Z}$ by adding two loops at every vertex does not sort $\Delta^n$ for any~$n \geq 5$.
\end{remark}

For~$r \geq 1$, we use $r\mathbb{Z}$ to denote the graph obtained from $\mathbb{Z}$ by replacing each edge by $r$ parallel edges.

\begin{conj} \label{conj:parallel}
For all $r \geq 1$, $r\mathbb{Z}$ sorts $\Delta^n$ whenever~$n \equiv 0 \mod 2r$.
\end{conj}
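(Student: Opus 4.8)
The plan is to mimic the proofs of Theorem~\ref{thm:main} and Theorem~\ref{thm:loops}, specialized to $n = 2rq$. First I would pin down the underlying unlabeled dynamics. On $r\mathbb{Z}$ every vertex has outdegree $2r$, so a vertex fires once it holds $2r$ chips, sending the $r$ smallest-labeled chips left and the $r$ largest-labeled chips right. An induction (adding chips $2r$ at a time, using that each firing raises $\varphi^2_{\infty}$ by $2r$) should give
\[ \widetilde{n\delta_0}^{\,r\mathbb{Z}} = r\,\delta_{[-q,-1]} + r\,\delta_{[1,q]} \qquad (n = 2rq), \]
and more generally $\mathrm{min}(\widetilde{n'\delta_0}^{\,r\mathbb{Z}}) = -\lfloor n'/(2r)\rfloor$ for all $n' \geq 1$. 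With the unlabeled stabilization in hand, the target is to show that the stable $\mathcal{D}$ groups the labels into $2q$ consecutive blocks of size $r$, the $b$-th block occupying the $b$-th occupied vertex from the left; concretely $\mathcal{D}(k) = -q + \lceil k/r\rceil - 1$ for $1 \leq k \leq rq$, with the right half following by the duality symmetry of Lemma~\ref{lem:symmetry} (which goes through verbatim, since reflection swaps the ``$r$ smallest go left'' and ``$r$ largest go right'' rules and fixes $\Delta^n$). This is exactly the weakly-increasing notion of sorting for $r\mathbb{Z}$.

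Next I would port the supporting tools of Section~\ref{sec:main}. The order-preservation lemma, Lemma~\ref{lem:stabcover}, should generalize to $\widetilde{\cdot}^{\,r\mathbb{Z}}$ exactly as indicated in the proof of Theorem~\ref{thm:loops}: one reproves the line-stabilization formula, Proposition~\ref{prop:linestab}, taking $a := \mathrm{max}\{j \leq i : c(j) \leq 2r-2\}$ and $b := \mathrm{min}\{j \geq i+1 : c(j) \leq 2r - 2\}$ (the nearest vertices on either side that can absorb a chip without firing), yielding $c \lessdot d \Rightarrow \widetilde{c}^{\,r\mathbb{Z}} \lessdot \widetilde{d}^{\,r\mathbb{Z}}$ and hence the analog of Corollary~\ref{cor:rightreach}. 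I would then run the induction on $k$ from Theorem~\ref{thm:main}: assuming the positions of $(1),\dots,(k-1)$, the analog of Lemma~\ref{lem:prooflem1} should say that if $(k)$ ends strictly to the right of its target then it never moved left as part of a group containing a smaller-labeled chip, and the swap argument of Lemma~\ref{lem:prooflem2} (chips $(k-1)$ and $(k)$ interact identically with every other chip, so if they never co-fire one may exchange them) should force $(k)$ to its target. The base step must show that chips $(r+1),\dots,(n)$ cannot reach the leftmost vertex $-q$, so that the $r$ chips the unlabeled stabilization places there are exactly $(1),\dots,(r)$.

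The crux, and the step I expect to be the genuine obstacle, is the rightward-reachability bookkeeping controlling how far the large-labeled chips travel left: the analog of Proposition~\ref{prop:labeledrightreach} feeding into a \emph{tight} position bound of the form $\mathcal{C}(k) \geq -\lfloor (n+1-k)/(2r)\rfloor$ (for $k = r+1$ this reads $\mathcal{C}(k) \geq -(q-1)$, which is what the base step needs). The difficulty is that restricting a labeled $r\mathbb{Z}$-firing to the chips labeled $\geq k$ is no longer a single tidy move. If $t$ of the $2r$ fired chips carry labels $\geq k$, then for $t \leq r$ all of them travel right, but for $r < t < 2r$ exactly $r$ go right while the remaining $t-r$ go \emph{left} --- and this projected move is captured neither by an ordinary chip-firing (it moves more than one chip) nor by an $r\mathbb{Z}$-firing on $[\mathcal{C}|_{\geq k}]$ (which may hold fewer than $2r$ chips labeled $\geq k$ at that vertex). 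Thus the clean ``one labeled firing $\mapsto$ one $\xrightarrow{R}$ step'' correspondence breaks: tracking the top chips through the ordinary relation $\xrightarrow{R}$ \emph{is} valid (each such projection is realizable by $t-r$ ordinary firings followed by rightward single-chip moves) but only yields the far-too-weak bound $\mathcal{C}(k) \geq -\lfloor (n+1-k)/2\rfloor$, whereas the $r\mathbb{Z}$-flavored reachability that \emph{would} give the tight bound is simply not reached by the projection. (This is not merely the crude fact $\mathrm{min}(d) \geq -q$ for every reachable $d$, which holds via the analog of Proposition~\ref{prop:stabmaxineq}; what is missing is the \emph{label-sensitive} refinement that separates the blocks.)

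To close the gap I would look for either a hybrid reachability relation --- allowing $r\mathbb{Z}$-firings together with rightward single-chip moves \emph{and} a controlled ``borrowing'' of small-labeled chips to complete deficient firings --- whose induced stabilization is still $\leq$-monotone, or else a direct invariant argument bounding, for each edge $(v,v-1)$ with $v \leq -q+1$, the number and the labels of the chips that ever cross it leftward. The latter seems promising: the leftmost vertex $-q$ provably never fires (it ends holding only $r < 2r$ chips and nothing can return to it), so the labels ending at $-q$ are exactly those crossing the single edge $(-q+1,-q)$, and one need only show these are the $r$ smallest and then propagate the argument inward through the blocks. Making this rigorous is where the real work lies, and is almost certainly why the statement is offered here as a conjecture rather than a theorem.
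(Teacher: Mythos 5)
You were asked about Conjecture~\ref{conj:parallel}, which the paper states \emph{without} proof; there is no argument of the authors' to compare yours against, and your proposal --- as you yourself acknowledge in your final paragraph --- does not close the gap either. What you have produced is a correct diagnosis of why the method of Theorems~\ref{thm:main} and~\ref{thm:loops} does not transfer. The chain that drives those proofs is Proposition~\ref{prop:labeledrightreach} $\Rightarrow$ Corollary~\ref{cor:rightreach} $\Rightarrow$ Lemma~\ref{lem:labeledchippos}, and you correctly locate the break in the first link: on $r\mathbb{Z}$ a firing moves $2r$ chips, and if $t$ of them carry labels $\geq k$ with $r < t < 2r$, then (because the low-labeled chips are all smaller and hence fill the leftward slots first) exactly $t-r$ of the high-labeled chips move left. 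Your observation that this projected move is still realizable inside the ordinary relation $\xrightarrow{R}$ on $\mathbb{Z}$ is right (pair the $t-r$ left-movers with right-movers as ordinary firings, then move the remaining $2r-t$ chips right), and so is your conclusion that Corollary~\ref{cor:rightreach} then only yields $\mathcal{C}(k) \geq -\lfloor (n+1-k)/2 \rfloor$, which cannot separate the size-$r$ blocks; the bound $\mathcal{C}(k) \geq -\lfloor (n+1-k)/(2r) \rfloor$ that your induction needs would require an $r\mathbb{Z}$-flavored reachability statement that the projection simply does not satisfy. Neither of your two proposed repairs (a monotone ``hybrid'' reachability with borrowing, or an edge-crossing invariant propagated inward from the never-firing leftmost vertex $-q$) is carried out, and each has the same unproven core: a label-sensitive lower bound on how far large-labeled chips can travel left.

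One further caution, should you pursue this: the combinatorial skeleton of the induction also needs re-justification on $r\mathbb{Z}$, not only the reachability bound. Your port of Lemma~\ref{lem:prooflem2} asserts that if $(k-1)$ and $(k)$ never co-fire they can be exchanged; but on $r\mathbb{Z}$ these two chips can be chosen in the \emph{same} firing group of $2r$ chips with the exchange still harmless (namely when both land on the same side of the split), and the exchange is genuinely obstructed only when one of them is the $r$-th smallest and the other the $(r+1)$-st smallest in the group, so that they are sent in opposite directions. Hence ``fired together'' must be redefined accordingly (the paper makes a milder adjustment of exactly this kind in the proof of Theorem~\ref{thm:loops}), and the analog of Lemma~\ref{lem:prooflem1} must then be proved for that redefined notion. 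None of this contradicts your assessment; it reinforces it. Your write-up is an accurate identification of the obstruction together with an honest admission that it is unresolved --- which is consistent with the authors' decision to leave the statement as a conjecture --- but it is a research plan, not a proof.
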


In fact, Conjectures~\ref{conj:loops} and~\ref{conj:parallel} can be simultaneously generalized. Let us use $r(\mathbb{Z}\langle S \rangle)$ to denote the graph obtained from $\mathbb{Z}\langle S \rangle$ by replacing each edge with $r$ parallel edges, including replacing each loop by $r$ loops. 

\begin{conj}
Let $S \subseteq \mathbb{Z}$. Suppose $n \in \mathbb{N}$ is such that:
\begin{itemize}
\item $\mathrm{min}(\widetilde{n\delta_0}^{\mathbb{Z}\langle S \rangle}) < \mathrm{min}(\widetilde{(n-1)\delta_0}^{\mathbb{Z}\langle S \rangle})$;
\item $\mathrm{max}(\widetilde{n\delta_0}^{\mathbb{Z}\langle S \rangle}) > \mathrm{max}(\widetilde{(n-1)\delta_0}^{\mathbb{Z}\langle S \rangle})$.
\end{itemize}
Then $r(\mathbb{Z}\langle S \rangle)$ sorts $\Delta^{rn}$ for each $r \geq 1$.
\end{conj}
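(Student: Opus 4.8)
The plan is to transplant the proof of Theorem~\ref{thm:main}, in the form already used to prove Theorem~\ref{thm:loops}, to the graph $G \coloneqq r(\mathbb{Z}\langle S \rangle)$, after first disposing of all the \emph{unlabeled} bookkeeping by a scaling argument. The clean observation is that $G$-stabilization commutes with multiplying all multiplicities by~$r$: if a configuration has every vertex-multiplicity divisible by $r$, then a $G$-firing at $i$ is available exactly when a $\mathbb{Z}\langle S\rangle$-firing at $i$ is available for the configuration obtained by dividing by $r$, and it has $r$ times the effect. Since $r\,\widetilde{n\delta_0}^{\,\mathbb{Z}\langle S\rangle}$ is manifestly $G$-stable (each vertex carries at most $r(1+[i \in S]) < \mathrm{outdeg}_G(i)$ chips), uniqueness of the unlabeled stabilization gives $\widetilde{(rn)\delta_0}^{\,G} = r\,\widetilde{n\delta_0}^{\,\mathbb{Z}\langle S\rangle}$, and more generally $\widetilde{(rt)\delta_0}^{\,G} = r\,\widetilde{t\delta_0}^{\,\mathbb{Z}\langle S\rangle}$ for every multiple $rt$ of $r$. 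Thus the target stable configuration carries exactly $r$ chips on each occupied vertex, and ``$G$ sorts $\Delta^{rn}$'' means precisely that the $j$th block of $r$ consecutive labels lands on the $j$th occupied vertex of $\widetilde{n\delta_0}^{\,\mathbb{Z}\langle S\rangle}$.

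Next I would port the structural inputs of Section~\ref{sec:main} to $G$. The key lemma, Lemma~\ref{lem:stabcover}, should survive exactly as it does in the proof of Theorem~\ref{thm:loops}: one defines $a$ and $b$ as the nearest vertices on either side of $i$ that sit at least two below saturation, and replaces Proposition~\ref{prop:linestab} by its analog computing the $G$-stabilization of a saturated interval plus one extra chip. One also checks that $\varphi_{\ell}$ still weakly decreases under $G$-firing (now dropping by $r$, rather than $1$, when vertex $\ell+1$ fires), which is what licenses the ``$\ell+1$ never fires'' step in the analog of Lemma~\ref{lem:prooflem1}. Because $\mathbb{Z}\langle S\rangle$ need not be reflection-invariant, the symmetry lemma (Lemma~\ref{lem:symmetry}) is unavailable; instead I would run the leftward and rightward halves of the sorting statement separately. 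The two hypotheses on $\mathrm{min}$ and $\mathrm{max}$ are tailored to make each half work: the first says the leftmost vertex $v_{\min}\coloneqq \mathrm{min}(\widetilde{n\delta_0}^{\,\mathbb{Z}\langle S\rangle})$ is strictly to the left of every vertex the labels $>r$ can reach (their stabilization being $r\,\widetilde{(n-1)\delta_0}^{\,\mathbb{Z}\langle S\rangle}$), so that — once the position bound below is in hand — the $r$ chips on $v_{\min}$ must be exactly $(1),\dots,(r)$; the second hypothesis does the same for the top block on the right; and $\varphi_\infty$-invariance ($\varphi_\infty(\widetilde{n\delta_0}^{\,\mathbb{Z}\langle S\rangle})=0$) keeps the two halves mutually consistent. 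This is exactly the role that ``$n=2m$ is even'' plays in Theorem~\ref{thm:main}.

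The main obstacle is the reachability input behind Lemma~\ref{lem:labeledchippos} and the internal Lemma~\ref{lem:prooflem1}, and it is genuinely new for $r \ge 2$. With a single edge in each direction (the case of Theorem~\ref{thm:loops}), restricting a labeled firing at $v$ to the chips of label $\ge k$ always yields either a $G$-firing or a single rightward move: at most one chip moves left, and if any chip of label $<k$ is present it is the one that moves left, so no high-label chip is displaced leftward. Hence Proposition~\ref{prop:labeledrightreach} and the $G$-version of Corollary~\ref{cor:rightreach} hold, bounding the leftmost high-label chip by $\mathrm{min}(\widetilde{(\,\cdot\,)\delta_0}^{\,G})$. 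When $r \ge 2$ this fails: firing a group of $\mathrm{outdeg}_G(v)$ chips of which only $p < r$ have label $<k$ pushes $r-p \ge 1$ chips of label $\ge k$ leftward without completing a $G$-firing, so the induced evolution of $[\mathcal{C}|_{\geq k}]$ is not $G$-rightward-reachable and the clean bound is lost. I expect the repair is to bound positions block-by-block rather than label-by-label: track the labels $\ge (j-1)r+1$ as a unit and compare against $\widetilde{r(n-j+1)\delta_0}^{\,G} = r\,\widetilde{(n-j+1)\delta_0}^{\,\mathbb{Z}\langle S\rangle}$, developing a reachability relation (or monotone potential) that tolerates the bounded leftward slippage of at most $r-1$ chips per firing yet still certifies that the leftmost label $\ge (j-1)r+1$ cannot pass $\mathrm{min}(\widetilde{(n-j+1)\delta_0}^{\,\mathbb{Z}\langle S\rangle})$. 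Making this slippage-tolerant bound tight enough to feed the induction is, I believe, the crux of the whole conjecture; once it is available, the analogs of Lemmas~\ref{lem:prooflem1} and~\ref{lem:prooflem2} (the latter porting essentially verbatim, via swapping two adjacent labels) should close the induction on blocks and yield sorting.
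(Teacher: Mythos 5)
This statement is a conjecture in the paper --- the paper offers no proof of it (it is stated as the common generalization of Conjecture~\ref{conj:loops} and Conjecture~\ref{conj:parallel}, both themselves open) --- so the only question is whether your proposal actually closes it, and it does not: it is a proof plan whose decisive step is explicitly left unresolved. Your own text concedes that constructing the ``slippage-tolerant'' reachability bound ``is the crux of the whole conjecture.'' Concretely, the entire engine of Theorem~\ref{thm:main} and Theorem~\ref{thm:loops} is Proposition~\ref{prop:labeledrightreach} together with Corollary~\ref{cor:rightreach}, which convert statements about where high-labeled chips can travel into statements about unlabeled stabilizations; you correctly diagnose that for $r \geq 2$ a single labeled firing at $v$ can push as many as $r-1$ chips of label $\geq k$ leftward without inducing a $G$-firing of $[\mathcal{C}|_{\geq k}]$, so rightward-reachability fails --- but you then only gesture at a block-by-block replacement without defining the relation, proving the analog of Lemma~\ref{lem:stabcover} for it, or deriving the analog of Lemma~\ref{lem:labeledchippos}. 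Without those, the induction on blocks never starts, and the analogs of Lemmas~\ref{lem:prooflem1} and~\ref{lem:prooflem2} have nothing to feed on.

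There is a second gap, independent of $r$: even the case $r=1$ of this statement is open (it is exactly Conjecture~\ref{conj:loops}), because for a general balanced $S$ the graph $\mathbb{Z}\langle S\rangle$ has no reflection symmetry, so Lemma~\ref{lem:symmetry} is unavailable. In the proofs of Theorems~\ref{thm:main} and~\ref{thm:loops}, symmetry is used essentially twice: to obtain the upper bound in Lemma~\ref{lem:labeledchippos} (hence the base case $k=1$ of the induction), and to transfer the conclusion about the low-labeled chips to the high-labeled ones. Your plan to ``run the leftward and rightward halves separately'' using the two min/max hypotheses is a reasonable reading of why those hypotheses appear in the conjecture, but it is not an argument: one would need a leftward-reachability analog of Corollary~\ref{cor:rightreach} for the low labels and a substitute for the final symmetry step, neither of which is worked out. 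What is correct and worth keeping is your scaling observation: divisibility of all multiplicities by $r$ is preserved under $G$-firing for $G = r(\mathbb{Z}\langle S\rangle)$, the induced dynamics on such configurations is $r$ times the $\mathbb{Z}\langle S\rangle$ dynamics, and $r$ times a $\mathbb{Z}\langle S\rangle$-stable configuration is $G$-stable, whence $\widetilde{(rt)\delta_0}^{\,G} = r\,\widetilde{t\delta_0}^{\,\mathbb{Z}\langle S\rangle}$ by uniqueness of stabilization. This correctly identifies the target configuration and reduces the unlabeled bookkeeping to the $r=1$ case; but that handles only the unlabeled layer, and the labeled layer --- where the entire difficulty of the conjecture lives --- remains open.
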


\subsection{Other configurations}

We use the notation $\widetilde{\mathcal{C}} \coloneqq \{\mathcal{D}\colon \mathcal{C} \to \mathcal{D} \textrm{ and $\mathcal{D}$ is stable}\}$. Another natural problem is to understand $\widetilde{\mathcal{C}}$ for more configurations~$\mathcal{C}$ on the infinite path~$\mathbb{Z}$. For example, for $n=1,3,5,7,9,\ldots$ we have $\#\widetilde{\Delta^{n}} = 1,3,12,54,232,\ldots$ (a sequence which is unfortunately not in the OEIS~\cite{oeis2016}). We understand the even case, so let us concentrate on this odd case; thus set~$n\coloneqq2m+1$. Since~$[\mathcal{D}] = [-m,m]$ for~$\mathcal{D} \in \widetilde{\Delta^{n}}$, we may identify elements of~$\widetilde{\Delta^{n}}$ with permutations. Completely describing the permutations in~$\widetilde{\Delta^{n}}$ seems hard, but there are at least a few nontrivial things we can say. First of all, Lemma~\ref{lem:labeledchippos} applies equally when $n$ is odd and puts some restrictions on~$\widetilde{\Delta^{n}}$. We can also say the following: for any injective, order-preserving map~$\iota\colon [n] \to [n+1]$, if we relabel a configuration~$\mathcal{D} \in \widetilde{\Delta^{n}}$ according to $\iota$, add a new chip~$(j)$ to the origin where $\{j\} \coloneqq [n+1] \setminus \mathrm{im}(\iota)$, and then stabilize the resulting configuration, the chips have to appear in sorted order. Indeed, this is a consequence of our main theorem, Theorem~\ref{thm:main}, because one possible way to stabilize~$\Delta^{n+1}$ is to ignore chip~$(j)$ for as long as possible and instead first stabilize the chips with labels in~$\mathrm{im}(\iota)$. Even these two conditions (Lemma~\ref{lem:labeledchippos} and the ``add a chip and stabilize to sort'' condition) together fail to completely characterize $\widetilde{\Delta^{n}}$, however, because for instance the permutation $23154$ satisfies both of these conditions but does not belong to $\widetilde{\Delta^{5}}$. We can at least offer the following attractive conjecture about $\widetilde{\Delta^{n}}$, which has been verified for $n \leq 9$ odd.

\begin{conj}
For $n=2m+1$, the maximum number of inversions among all permutation in~$\widetilde{\Delta^{n}}$ is exactly~$m$.
\end{conj}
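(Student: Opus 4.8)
The plan is to prove the conjecture in two directions: first exhibit a stable configuration in $\widetilde{\Delta^{n}}$ having exactly $m$ inversions (the lower bound on the maximum), and then show that no stable configuration reachable from $\Delta^{n}$ can have more than $m$ inversions (the upper bound). Throughout, I identify $\mathcal{D} \in \widetilde{\Delta^{n}}$ with the permutation reading off labels left to right, since $[\mathcal{D}] = \delta_{[-m,m]}$ by Lemma~\ref{lem:deltastab}.

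For the lower bound, I would try to produce an explicit reachable stabilization with exactly $m$ inversions. A natural candidate is the ``near-sorted'' configuration obtained by running the process so that it almost sorts but leaves the central chip $(m+1)$ displaced. Concretely, I expect that by mimicking the even case one can stabilize $\Delta^{2m+1}$ so that chips $(1),\dots,(m)$ land in sorted order on $[-m,-1]$ and chips $(m+2),\dots,(2m+1)$ land in sorted order on $[1,m]$, with chip $(m+1)$ forced to the origin. More useful for maximizing inversions is to use the ``add a chip and stabilize'' principle from the text: take a fully sorted outcome of $\Delta^{2m}$ and insert the new smallest-or-largest label. I would instead look for the configuration where a single chip is ``out of place'' by being dragged as far as Lemma~\ref{lem:labeledchippos} permits, and verify by the reachability arguments of Section~\ref{sec:main} that a permutation such as $(2,3,\dots,m{+}1,1,m{+}2,\dots,n)$ (which has exactly $m$ inversions) is genuinely reachable.

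For the upper bound --- which I expect to be the main obstacle --- I would bound inversions using the positional constraints already available. Lemma~\ref{lem:labeledchippos} gives $-\lfloor (n+1-k)/2\rfloor \le \mathcal{D}(k) \le \lfloor k/2\rfloor$ for each chip $(k)$. Since $[\mathcal{D}] = \delta_{[-m,m]}$, the map $k \mapsto \mathcal{D}(k)$ is a bijection $[n] \to [-m,m]$, and an inversion is a pair $i<j$ with $\mathcal{D}(i) > \mathcal{D}(j)$. The key observation I would pursue is that the windows $[-\lfloor (n+1-k)/2\rfloor,\ \lfloor k/2\rfloor]$ widen only slowly as $k$ increases, so a chip $(k)$ can be inverted with only a controlled number of other chips. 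I would set up a counting argument: for each label $k$, bound the number of $j > k$ with $\mathcal{D}(j) < \mathcal{D}(k)$ by comparing the interval $[\mathcal{D}(k)]$ against the forced left-boundaries $-\lfloor (n+1-j)/2\rfloor$ for $j>k$, and sum. The arithmetic should collapse to $m$; the subtlety is that the positional bounds alone likely overcount, so I anticipate needing the finer structural input that stabilization preserves the order $\le$ (Lemma~\ref{lem:stabcover}), applied to suitable label-restrictions $[\mathcal{D}|_{\ge k}]$ and their rightward-reachability from $(n{+}1{-}k)\delta_0$ via Proposition~\ref{prop:labeledrightreach} and Corollary~\ref{cor:rightreach}.

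The hardest part will be closing the gap between the crude positional bound and the exact value $m$: I expect the inequalities from Lemma~\ref{lem:labeledchippos} to yield an upper bound that is $\Theta(m)$ but not sharp, and pinning it to exactly $m$ will require showing that inversions ``cost'' displacement in a way that cannot all be realized simultaneously. A promising route is to track a potential like the total displacement $\sum_k |\mathcal{D}(k) - \sigma(k)|$ against a sorted target $\sigma$, or to argue that each inversion forces at least one extra unit of some monovariant that is globally bounded by $m$ (for instance relating inversions to the positions of the ``excess'' chip that, unlike the even case, has no partner). I would combine this with an inductive peeling argument --- remove the extreme labels $(1)$ and $(n)$, which Lemma~\ref{lem:labeledchippos} pins to the endpoints $\mp m$ when they are forced there, reducing to a smaller odd case --- to carry the bound $m \to m-1$ and complete the induction.
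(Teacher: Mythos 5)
You should first be aware that this statement is a \emph{conjecture} in the paper: the authors offer no proof at all, only computer verification for odd $n\leq 9$, so there is no paper argument to compare against, and the question is whether your outline itself constitutes a proof. It does not. One half of it is, however, easy to close. For the lower bound you do not need any delicate reachability analysis: starting from $\Delta^{2m+1}$, simply never fire chip~$(1)$ and stabilize the other $2m$ chips among themselves. Relabeling those chips by the order-preserving bijection $[2m]\to\{2,\ldots,2m+1\}$, Theorem~\ref{thm:main} says they must sort themselves onto $[-m,-1]\cup[1,m]$, while $(1)$ sits at the origin; the resulting configuration is stable and reads $2,3,\ldots,m+1,1,m+2,\ldots,2m+1$ from left to right, which has exactly $m$ inversions. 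This is precisely the ``ignore one chip'' device the paper itself uses in this subsection; you name the right target permutation but defer its reachability to unspecified ``reachability arguments,'' which leaves even the easy half incomplete.

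The upper bound is where all the content of the conjecture lies, and there your proposal contains no argument, only candidate techniques, and the principal one provably cannot work. The windows of Lemma~\ref{lem:labeledchippos} do not force the bound $m$: for $n=5$ (so $m=2$), the permutation $23154$ --- chip $(2)$ at $-2$, $(3)$ at $-1$, $(1)$ at $0$, $(5)$ at $1$, $(4)$ at $2$ --- respects every window $[-\lfloor(n+1-k)/2\rfloor,\lfloor k/2\rfloor]$ yet has $3>m$ inversions. Worse, the paper points out that $23154$ also satisfies the stronger ``add a chip and stabilize to sort'' necessary condition and \emph{still} does not lie in $\widetilde{\Delta^{5}}$; so no combination of the necessary conditions available in the paper (including Lemma~\ref{lem:stabcover}, Proposition~\ref{prop:labeledrightreach}, and Corollary~\ref{cor:rightreach}, which you invoke) can pin the maximum to $m$, and a proof requires genuinely new reachability obstructions. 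Your inductive peeling step also rests on a false premise: for odd $n$, chip $(1)$ is \emph{not} forced to the endpoint $-m$ (its window is $[-m,0]$, and already for $n=3$ the vertex $-1$ can be occupied by $(2)$ instead); the failure of exactly this pinning is why the odd case lacks confluence in the first place. As it stands, your text is a research plan rather than a proof, and the statement remains open.
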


A different way to understand configurations for which there is not unique labeled stabilization would be probabilistically.  There are at least three reasonable ways to carry out labeled chip-firing randomly: (1) at each step choose a chip-firing move uniformly at random among all possible moves; (2) at each step choose an unstable vertex uniformly at random and then choose a pair of chips at that vertex uniformly at random; or (3) choose a stabilization sequence uniformly at random among all (labeled) stabilization sequences.  Based on some limited computer simulations, it appears that when $m$ is large, random labeled chip-firing applied to $\Delta^{2m+1}$ leads to all chips ending up sorted with probability around $.33$ under all three protocols. We have no intuition for why this probability should not converge to~$0$. It is clear that it cannot converge to a limit greater than $1/3$, since $2/3$ of the time the ``last move'' fails to put the chips in sorted order. (It is not hard to see that the last move necessarily involves firing a vertex that has three chips on it, and so only one of the three possible labeled firings of these three chips will locally sort them.)

\begin{conj} \label{conj:random}
With respect to any of the three protocols for random labeled chip-firing described above, the probability that $\Delta^{2m+1}$ sorts converges to $1/3$ as $m \to \infty$.
\end{conj}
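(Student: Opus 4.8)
The plan is to reduce the conjecture to a single structural statement about the \emph{penultimate} configuration and then to analyze that statement. First I would make the authors' ``last move'' observation precise. If $\Delta^{2m+1}\to\mathcal{C}$ with $\mathcal{C}$ one firing short of stable, then a direct check using Lemma~\ref{lem:deltastab} shows that $[\mathcal{C}]$ must consist of a single triply-occupied interior vertex $j\in[-m+1,m-1]$, empty vertices at $j-1$ and $j+1$, and exactly one chip at every other vertex of $[-m,m]$; call such a $\mathcal{C}$ penultimate. The unique unstable vertex is $j$, so under each of the three protocols the final move selects one of the $\binom{3}{2}=3$ pairs of chips at $j$ uniformly (for protocols (1) and (2) this is immediate; for protocol (3) it holds because each of the three continuations extends to a stable configuration in exactly one way). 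Of these pairings, exactly one sends the smallest and largest of the three stacked chips outward and leaves the middle one fixed, and this is the only pairing that can produce a sorted configuration. Consequently, for all three protocols,
\[ \Pr(\Delta^{2m+1}\ \text{sorts}) \;=\; \tfrac13\,\Pr(\text{the penultimate configuration is \emph{good}}), \]
where I call a penultimate $\mathcal{C}$ \emph{good} if every singleton chip already sits at its globally sorted position; this single condition forces the three stacked chips to be precisely the labels destined for $j-1,j,j+1$, and a misplaced singleton is untouched by the last move and so survives. This reduction instantly reproves the upper bound $\Pr(\text{sorts})\le\tfrac13$ and pins the entire conjecture on the lower bound: it is equivalent to $\Pr(\text{good penultimate})\to1$ as $m\to\infty$, simultaneously for all three measures.

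The heart of the matter is therefore to show that, with probability tending to $1$, the process sorts every chip except for a single central ambiguity. My plan here is to exploit the deterministic even case, Theorem~\ref{thm:main}, by coupling $\Delta^{2m+1}$ with the even configurations $\Delta^{2m}$ and $\Delta^{2m+2}$. Using the order-preservation results---that stabilization preserves the covering order (Lemma~\ref{lem:stabcover}) and the rightward-reachability monotonicity of Corollary~\ref{cor:rightreach}, together with the positional confinement of Lemma~\ref{lem:labeledchippos}---one can try to sandwich the trajectory of each labeled chip between its trajectories in the two even systems, both of which sort deterministically. The goal is to promote these one-sided bounds into the statement that all but $O(1)$ chips are driven to their sorted positions and remain there, so that the only surviving freedom is localized to a bounded cluster around the ``extra'' chip, which almost surely resolves into a single triple at the final step. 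A complementary route is to model the surplus chip as a defect propagating through an otherwise-even, deterministically sorting background and to bound the probability that this defect spawns a persistent inversion away from the center.

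The main obstacle is exactly that confluence \emph{fails} in the odd case, so there is no deterministic backbone to lean on: the genuine branching of the process could in principle create inversions far from the center, and ruling this out requires quantitative, two-sided control of the dynamics rather than the one-sided monotonicity that Lemma~\ref{lem:stabcover} and Corollary~\ref{cor:rightreach} supply. The companion conjecture that reachable stable configurations carry at most $m$ inversions indicates that \emph{bad} outcomes are combinatorially sparse, but converting sparsity of the reachable \emph{set} into vanishing \emph{probability} under the dynamics is the crux and appears to need a new idea. A further subtlety is that the three protocols induce genuinely different measures on stabilization sequences, so the coupling or defect analysis must be robust enough to yield the same limit $1/3$ for all three; I expect protocol (3) (uniform over sequences) to be the most delicate, likely requiring an entropy or transfer-matrix style enumeration of stabilization sequences through good versus bad penultimates, whereas protocols (1) and (2) should be more amenable to a direct step-by-step concentration argument.
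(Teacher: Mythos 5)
You are attempting a statement that the paper itself leaves as a conjecture (Conjecture~\ref{conj:random}): the authors offer only numerical evidence and the observation that the limit cannot exceed $1/3$, so there is no proof in the paper to compare against, and the question is simply whether your argument closes the conjecture. Your first paragraph is correct and is a clean formalization of the paper's own remark. The penultimate configuration is indeed forced to be the stable configuration $\delta_{[-m,m]}$ of Lemma~\ref{lem:deltastab} with one interior vertex $j$ ``un-fired'' (three chips at $j$, vertices $j\pm1$ empty, singletons elsewhere); under all three protocols the conditional law of the final pairing is uniform on the three choices (your counting for protocol (3) is right, since every maximal firing sequence passes through exactly one penultimate configuration and extends it in exactly three ways); and the outcome is sorted if and only if the penultimate configuration is good and the unique correct pairing is chosen. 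This yields $\Pr(\textrm{sorts})=\tfrac{1}{3}\Pr(\textrm{good penultimate})$, which reproves the upper bound and shows the conjecture is equivalent to $\Pr(\textrm{good penultimate})\to 1$.

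However, that lower bound is the entire content of the conjecture, and your proposal contains no proof of it---only candidate strategies that you yourself flag as incomplete. The gap is real and not a matter of missing detail: Lemma~\ref{lem:stabcover}, Corollary~\ref{cor:rightreach}, and Lemma~\ref{lem:labeledchippos} are deterministic statements about which configurations are \emph{reachable}, and such statements provably cannot settle the question, because for every fixed $m$ the bad stable outcomes are genuinely reachable ($\#\widetilde{\Delta^{n}}=1,3,12,54,232,\ldots$ grows with $n$ odd), so each occurs with positive probability; what must be shown is that their total probability vanishes as $m\to\infty$, which requires quantitative control of the random dynamics that neither the sandwich coupling with $\Delta^{2m}$ and $\Delta^{2m+2}$ (which can only reproduce the one-sided positional bounds already in Lemma~\ref{lem:labeledchippos}) nor the defect heuristic supplies. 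In short, you have a correct and useful reformulation of the conjecture together with the known upper bound, but the conjecture itself remains open after your argument, exactly as it does in the paper.
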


What Conjecture~\ref{conj:random} would mean is that, in the limit, any of these random chip-firing protocols sorts $\Delta^{2m+1}$, except that with probability $2/3$ the protocol does not locally sort the three chips which occupy the vertex it fires on its last move.

\subsection{Other types}
In this subsection we describe a ``Type B'' analog of our main result, which follows in a straightforward way from our main result via symmetry. Consider the following moves applied to a labeled configuration $\mathcal{C}$ on $\mathbb{Z}$:
\begin{enumerate}[(I)]
\item if chips~$(a)$ and $(b)$ with $a < b$ are both at vertex $i \in \mathbb{Z}$, move $(a)$ leftward one vertex and $(b)$ rightward one vertex (this is the usual labeled chip-firing move);
\item if chip~$(a)$ is at vertex $i \in \mathbb{Z}$ and chip~$(b)$ is at vertex~$-i$, move both~$(a)$ and~$(b)$ rightward one vertex;
\item if chip~$(a)$ is at the origin, move~$(a)$ rightward one vertex.
\end{enumerate}

\begin{thm} \label{thm:typeb}
For any $n\geq 1$, starting from the configuration~$\Delta^n$ and applying the moves~$(I)$, $(II)$, and $(III)$ in any order for as long as we can, we always arrive at the configuration that has chip~$(i)$ at vertex~$i$ for all $1 \leq i \leq n$.
\end{thm}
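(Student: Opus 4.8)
The plan is to realize the Type B process with $n$ chips as a ``self-dual'' instance of the ordinary (Type A) labeled chip-firing process with $2n$ chips, and then to quote Theorem~\ref{thm:main} directly. Concretely, I would introduce a doubling map $\Phi$ sending an $n$-chip labeled configuration $\mathcal{C}$ on $\mathbb{Z}$ to the $2n$-chip labeled configuration $\Phi(\mathcal{C})$ defined by placing chip $(n+i)$ at vertex $\mathcal{C}(i)$ and chip $(n+1-i)$ at vertex $-\mathcal{C}(i)$, for each $1 \le i \le n$. The labels $n+i$ and $n+1-i$ (as $i$ ranges over $[n]$) are exactly $1,2,\ldots,2n$, so this is a genuine $2n$-chip configuration, and by construction $\Phi(\mathcal{C})$ is self-dual for the duality $\mathcal{C} \mapsto \mathcal{C}^{*}$ of Lemma~\ref{lem:symmetry}. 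Note that $\Phi(\Delta^n) = \Delta^{2n}$, and that the target configuration of the theorem (chip $(i)$ at vertex $i$) maps under $\Phi$ precisely to $\widetilde{\Delta^{2n}}$ as described by Theorem~\ref{thm:main} with $m = n$: there chip $(n+i)$ sits at vertex $i$, so decoding recovers chip $(i)$ at vertex $i$.

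The heart of the argument is a step-by-step correspondence: each Type B move applied to $\mathcal{C}$ lifts to one or two ordinary labeled firings applied to $\Phi(\mathcal{C})$, in such a way that the result is exactly $\Phi$ of the new Type B configuration. I would verify this by cases. A move of type~(I) firing $(a)$ and $(b)$ (with $a<b$) at a vertex $i \ne 0$ lifts to the firing of $(n+a),(n+b)$ at $i$ together with the mirror firing of $(n+1-a),(n+1-b)$ at $-i$ (these are at distinct vertices, hence independently performable); a move of type~(I) at the origin lifts similarly to two firings at $0$ involving two disjoint pairs. A move of type~(II) with chips $(a)$ at $i$ and $(b)$ at $-i$, $i \ne 0$, lifts to firing $(n+a)$ against its neighbor $(n+1-b)$ at $i$ and firing $(n+b)$ against $(n+1-a)$ at $-i$; the inequalities $n+a > n+1-b$ and $n+b > n+1-a$ (both equivalent to $a+b>1$) guarantee that these firings send $(n+a)$ and $(n+b)$ rightward, as required. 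A move of type~(III) moving $(a)$ off the origin lifts to the single self-dual firing of $(n+a)$ against its mirror $(n+1-a)$ at $0$, which sends $(n+a)$ right because $n+a > n+1-a$. (A type~(II) move with $i=0$ is simply two type~(III) moves, so needs no separate treatment.) The point enabling all of this is that labeled Type A firing permits us to choose which pair of co-located chips to fire, so we may always select the mirror-matched pair dictated by $\Phi$.

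Next I would record the stability dictionary: $\mathcal{C}$ admits no Type B move if and only if $\Phi(\mathcal{C})$ is Type A-stable (at most one chip per vertex). Indeed, a chip at the origin, two chips sharing a vertex, or two chips at mirror vertices $\pm i$ each forces two chips of $\Phi(\mathcal{C})$ onto a common vertex, and these are precisely the situations enabling moves~(III)/(II), (I), and~(II) respectively; conversely any double occupancy of $\Phi(\mathcal{C})$ arises in exactly one of these ways. Termination is then immediate: since every Type B move produces at least one Type A firing, an infinite Type B run would produce an infinite firing sequence out of $\Delta^{2n}$, contradicting the termination established in the proof of Lemma~\ref{lem:stab}. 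Hence every Type B run from $\Delta^n$ halts at a configuration $\mathcal{C}$ admitting no further move.

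Finally I would assemble the pieces. A halted Type B run from $\Delta^n$ lifts to a firing sequence $\Delta^{2n} = \Phi(\Delta^n) \to \Phi(\mathcal{C})$ with $\Phi(\mathcal{C})$ Type A-stable, so Theorem~\ref{thm:main} forces $\Phi(\mathcal{C}) = \widetilde{\Delta^{2n}}$; decoding through $\Phi$ yields $\mathcal{C}(i) = i$ for all $1 \le i \le n$, which is the claim. I expect the main obstacle to be the bookkeeping in the lifting step---in particular checking the origin cases of move~(I), confirming via the $a+b>1$ inequalities that the mirror-matched pairs fire in the correct directions, and verifying that all $2n$ lifted labels stay distinct so no collisions corrupt the $\Phi$-correspondence---rather than any conceptual difficulty, since once the correspondence is in place the theorem is an immediate consequence of Theorem~\ref{thm:main}.
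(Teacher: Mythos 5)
Your proof is correct and takes essentially the same route as the paper: the paper also lifts the Type B process to a symmetric instance of ordinary labeled chip-firing with $2n$ chips (labeled $-n,\ldots,-1,1,\ldots,n$ rather than via your explicit relabeling $\Phi$ onto $[2n]$), matches moves (I), (II), (III) to mirror-paired Type A firings, and then quotes Theorem~\ref{thm:main}. Your version is somewhat more careful about the stability dictionary and termination, but the underlying idea is identical.
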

\begin{proof}
The proof follows from Theorem~\ref{thm:main} by considering a symmetric version of the labeled chip-firing process. Suppose we carry out the labeled chip-firing process on~$\mathbb{Z}$, starting from $2n$ chips at the origin with labels~$-n,-(n-1),\ldots,-1,1,2,\ldots,n$. Also, suppose that whenever we fire $(a)$ and $(b)$ together, where $a \neq -b$, we also immediately fire $(-a)$ and $(-b)$ together. This will mean that at all times in the process, if chip~$(a)$ is at vertex $i$ then chip~$(-a)$ must be at vertex $-i$ (i.e., the configuration will always be symmetric about the origin). Moreover, we claim that the way the positive labeled chips evolve in this process is exactly according to the moves (I), (II) and (III) above. Specifically, move (I) corresponds to firing $(a)$ and $(b)$  together (and $(-a)$ and $(-b)$ together). Move (II) corresponds to firing $(a)$ and $(-b)$ together, and $(-a)$ and $(b)$ together. Finally, move (III) corresponds to firing $(a)$ and $(-a)$ together, which because of the symmetry we maintain during the process, can only happen if $(a)$ and $(-a)$ are both at the origin. The claimed result then follows from Theorem~\ref{thm:main}, which says precisely that chip~$(i)$ ends at vertex~$i$ for all $1 \leq i \leq n$.
\end{proof}

Why do we call the process in Theorem~\ref{thm:typeb} a Type B analog of labeled chip-firing? First let us explain why the ordinary labeled chip-firing process is ``Type A.'' Suppose that~$\mathcal{C}$ is a labeled configuration on $\mathbb{Z}$ with label set $[n]$, and consider the vector~$v(\mathcal{C})$ in~$\mathbb{R}^n$ which records the positions of chips: $v(\mathcal{C}) \coloneqq (\mathcal{C}(1),\mathcal{C}(2),\ldots,\mathcal{C}(n))$. How does~$v(\mathcal{C})$ evolve as we cary out the usual labeled chip-firing process? Firing chips~$(i)$ and~$(j)$ with $i < j$ corresponds to adding the vector $e_j - e_i$ (where $e_i$ is the standard basis vector), and we are allowed to perform such a move whenever $\mathcal{C}(i) = \mathcal{C}(j)$, i.e., whenever $(v(\mathcal{C}),e_j-e_i) = 0$ (where $(\cdot,\cdot)$ denotes the standard inner product on~$\mathbb{R}^n$). The collection of vectors $\{e_j-e_i\colon 1\leq i < j \leq n\}$ is (one choice for) the set of positive roots of the root system of Type $A_{n-1}$. (Consult Bourbaki~\cite{bourbaki2002lie} for the basic theory of root systems.) Theorem~\ref{thm:main} says that, if $n$ is even, starting from the origin~$v \coloneqq (0,0,...,0) \in \mathbb{R}^n$ and repeatedly updating our vector $v$ by $v \mapsto v+\alpha$ whenever we have~$(v,\alpha) = 0$ for some positive root $\alpha$ of Type $A_{n-1}$, we always terminate at the same final vector no matter what choices we make along the way. Theorem~\ref{thm:typeb} is the exact same statement, except that we take our $\alpha$'s to be the positive roots of Type $B_n$ and the conclusion now holds for all values of $n$.

We thank Pavel Galashin for this ``vector-firing'' interpretation of our result. In ongoing research with Pavel Galashin and Alex Postnikov we are exploring the deeper connection between labeled chip-firing and the theory of root systems, and are attempting to extend this confluence result to other types. For some related work on chip-firing and root systems, see~\cite{benkart2016chip}.

\bibliography{chip_firing_sorting}{}

\begin{thebibliography}{10}

\bibitem{anderson1989disks}
Richard Anderson, L{\'a}szl{\'o} Lov{\'a}sz, Peter Shor, Joel Spencer, {\'E}va
  Tardos, and Shmuel Winograd.
\newblock Disks, balls, and walls: analysis of a combinatorial game.
\newblock {\em Amer. Math. Monthly}, 96(6):481--493, 1989.

\bibitem{bak1987self}
Per Bak, Chao Tang, and Kurt Wiesenfeld.
\newblock Self-organized criticality: An explanation of the 1/ \textit{f}
  noise.
\newblock {\em Phys. Rev. Lett.}, 59:381--384, 1987.

\bibitem{benkart2016chip}
Georgia Benkart, Caroline Klivans, and Victor Reiner.
\newblock Chip firing on {D}ynkin diagrams and {M}c{K}ay quivers.
\newblock Eprint published online at \arxiv{1601.06849}, 2016.

\bibitem{bjorner1991chip}
Anders Bj{\"o}rner, L{\'a}szl{\'o} Lov{\'a}sz, and Peter~W. Shor.
\newblock Chip-firing games on graphs.
\newblock {\em European J. Combin.}, 12(4):283--291, 1991.

\bibitem{bond2016abelian1}
Benjamin Bond and Lionel Levine.
\newblock Abelian networks {I}. {F}oundations and examples.
\newblock {\em SIAM J. Discrete Math.}, 30(2):856--874, 2016.

\bibitem{bond2016abelian2}
Benjamin Bond and Lionel Levine.
\newblock Abelian networks {II}: halting on all inputs.
\newblock {\em Selecta Math. (N.S.)}, 22(1):319--340, 2016.

\bibitem{bond2016abelian3}
Benjamin Bond and Lionel Levine.
\newblock Abelian networks {III}: {T}he critical group.
\newblock {\em J. Algebraic Combin.}, 43(3):635--663, 2016.

\bibitem{bourbaki2002lie}
Nicolas Bourbaki.
\newblock {\em Lie groups and {L}ie algebras. {C}hapters 4--6}.
\newblock Elements of Mathematics (Berlin). Springer-Verlag, Berlin, 2002.
\newblock Translated from the 1968 French original by Andrew Pressley.

\bibitem{corry2017divisors}
Scott Corry and David Perkinson.
\newblock Divisors and sandpiles.
\newblock In the \emph{AMS Student Mathematical Library} series, 2017.
\newblock Book in progress; draft available online at
  \url{http://people.reed.edu/~davidp/divisors_and_sandpiles/}.

\bibitem{dhar1990self}
Deepak Dhar.
\newblock Self-organized critical state of sandpile automaton models.
\newblock {\em Phys. Rev. Lett.}, 64(14):1613--1616, 1990.

\bibitem{dhar1999abelian}
Deepak Dhar.
\newblock The abelian sandpile and related models.
\newblock {\em Physica A: Statistical Mechanics and its Applications}, 263(1):4
  -- 25, 1999.

\bibitem{engel1976why}
Arthur Engel.
\newblock Why does the probabilistic abacus work?
\newblock {\em Educational Studies in Mathematics}, 7(1/2):59--69, 1976.

\bibitem{holroyd2015abelian}
Alexander~E. Holroyd, Lionel Levine, and Peter Winkler.
\newblock Abelian logic gates.
\newblock Eprint published online at \arxiv{1511.00422}, 2015.

\bibitem{huet1980confluent}
G{\'e}rard Huet.
\newblock Confluent reductions: Abstract properties and applications to term
  rewriting systems.
\newblock {\em J. ACM}, 27(4):797--821, October 1980.

\bibitem{levine2010sandpile}
Lionel Levine and James Propp.
\newblock What is {$\dots$} a sandpile?
\newblock {\em Notices Amer. Math. Soc.}, 57(8):976--979, 2010.

\bibitem{newman1942theories}
M.~H.~A. Newman.
\newblock On theories with a combinatorial definition of ``equivalence.''.
\newblock {\em Ann. of Math. (2)}, 43:223--243, 1942.

\bibitem{oeis2016}
N.J.A. Sloane.
\newblock The {O}n-{L}ine {E}ncyclopedia of {I}nteger {S}equences, 2016.
\newblock Published online at~\url{https://oeis.org/}.

\bibitem{spencer1986balancing}
J.~Spencer.
\newblock Balancing vectors in the max norm.
\newblock {\em Combinatorica}, 6(1):55--65, 1986.

\end{thebibliography}
\bibliographystyle{plain}

\end{document}